\newtheorem{definition}{Definition}[section]
\newtheorem{theorem}[definition]{Theorem}
\newtheorem{lemma}[definition]{Lemma}
\newtheorem{remark}[definition]{Remark}
\date{}
\begin{document}
\baselineskip 18pt
\title[RDR for linear systems]{
Randomized Douglas-Rachford methods for linear systems: Improved  accuracy and efficiency
}

\author{Deren Han}
\address{LMIB of the Ministry of Education, School of Mathematical Sciences, Beihang University, Beijing, 100191, China. }
\email{handr@buaa.edu.cn}

\author{Yansheng Su}
\address{School of Mathematical Sciences, Beihang University, Beijing, 100191, China.}
\email{suyansheng@buaa.edu.cn}

\author{Jiaxin Xie}
\address{LMIB of the Ministry of Education, School of Mathematical Sciences, Beihang University, Beijing, 100191, China. }
\email{xiejx@buaa.edu.cn}

\begin{abstract}
The Douglas-Rachford (DR) method is a widely used method for finding a point in the intersection of two closed convex sets (feasibility problem).
However,  the method converges weakly and the associated rate of convergence is hard to analyze in general.
In addition, the direct extension of the DR method for solving more-than-two-sets feasibility problems, called the $r$-sets-DR method, is not necessarily convergent.
To improve the efficiency of the optimization algorithms,  the introduction of randomization and the momentum technique has attracted increasing attention.
In this paper, we propose the randomized $r$-sets-DR (RrDR) method for solving the feasibility problem derived from linear systems, showing the benefit of the randomization as it brings linear convergence in expectation to the otherwise divergent $r$-sets-DR method. Furthermore, the convergence rate does not depend on the dimension of the coefficient matrix.
We also study RrDR with heavy ball momentum and establish its accelerated rate.
Numerical experiments are provided to confirm our results and demonstrate the notable improvements in  accuracy and efficiency of the DR method, brought by the randomization and the momentum technique.
\end{abstract}

\maketitle

\let\thefootnote\relax\footnotetext{Key words: 	Douglas-Rachford; Randomization; Heavy ball momentum; Convergence rate; Linear systems; Kaczmarz method}

\let\thefootnote\relax\footnotetext{Mathematics subject classification (2020): 90C25, 65F10, 65F20, 68W20}

	\section{Introduction}\label{sec:intro}
\subsection{Problem setup}

Consider the large-scale system of linear equations
\begin{equation}
	\label{main-prob}
	Ax=b,
\end{equation}
where $A\in\mathbb{R}^{m\times n}$ and $b\in\mathbb{R}^m$.
The problem of solving the linear system \eqref{main-prob} arises in various fields of science and engineering, such as
optimal control \cite{Pat17}, machine learning \cite{Cha08}, signal processing \cite{Byr04},  and partial differential equations \cite{Ols14}. Throughout this paper, we assume that this linear system is consistent and
refer to a certain solution of \eqref{main-prob} as $x^*$.

Let $a_1^\top,a_2^\top,\ldots,a_m^\top$ denote the rows of $A$ and let $b=(b_1,\ldots,b_m)^\top$. Then \eqref{main-prob} can be rewritten as the following \emph{feasibility problem}
\begin{equation}
	\label{feaprob}
	\mbox{Find} \ x^*\in C = \mathop{\cap}\limits_{i=1}^m C_i, \quad \mbox{where} \ C_i:=\{x:\langle a_i,x\rangle=b_i\}.
\end{equation}
A classic approach to solving \eqref{feaprob} is the \emph{projection method}.
However, it can be costly to compute the projection onto the intersection $C$.
So a more practical strategy is to successively project the current iterate onto a single feasible set $C_i$ at each iteration, where $ C_i $ is chosen in a certain manner.
There are numerous such methods, for instance, the Dykstra's method \cite{Dyk83}, the von Neumann method \cite{Bau93}, and the Douglas-Rachford (DR) method \cite{Dou56}.
In this paper, we focus on the DR method.

\subsection{ Douglas-Rachford method}
\label{sect:DR}
The Douglas-Rachford 
method \cite{Lin20} is a notable splitting approach for finding zeros of the sum of maximal monotone operators.
It has already been applied to various optimization problems
whose objective function is a sum of proper closed convex functions; see \cite{Lin20,Ara14,Li16,Eck92}.
Since feasibility problems are special cases where the operators are normal cones, it implies that the DR method can be applied to them.
For any closed set $C_i$, let $P_{C_i}$ denote the orthogonal \emph{projection} operator onto $C_i$ and $R_{C_i}:=2P_{C_i}-I$ denote the \emph{reflection} operator over $C_i$, where $I$ denotes the identity operator.
Specifically, when $C_i=\{x:\langle a_i,x\rangle=b_i\}$ is a hyperplane, for any $x\in \mathbb{R}^n$ we have
$$P_{C_i}(x)=x-\frac{\langle a_i,x\rangle-b_i}{\|a_i\|^2_2}a_i, \ \ \mbox{and} \ \ R_{C_i}(x)=x-2\frac{\langle a_i,x\rangle-b_i}{\|a_i\|^2_2}a_i.$$
The canonical DR method is restricted to dealing with only two sets, namely, finding a feasible point  in the intersection of $C_1$ and $C_2$.  Starting from a proper $x^0$, 
it iterates with the format
$$
x^{k+1}=\frac{1}{2}\left(I+R_{C_2}R_{C_1}\right)(x^k),
$$
which is illustrated in Figure \ref{figueDRmethod}.
For this reason, the DR scheme is also known as \emph{reflect-reflect-average} \cite{Bor11} or \emph{averaged alternating reflections} \cite{Bau04} in the literature.

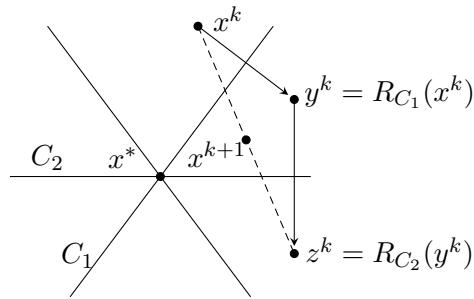
\begin{figure}[hptb]
	\centering
	\begin{tikzpicture}
		\draw (-2,0) -- (2,0);
		\draw (-1.5,2) -- (1.2,-1.6);
		\draw (-1.2,-1.6) -- (1.5,2);
		
		\filldraw (0,0) circle [radius=1.5pt]
		(0.5,2) circle [radius=1.5pt]
		(1.78,1.025) circle [radius=1.5pt]
		(1.78,-1.025) circle [radius=1.5pt]
		(1.14,0.4875) circle [radius=1.5pt];
		
		\draw (-0.5,0.25) node {$x^*$}
		(0.9,2.1) node {$x^k$}
		(3.05,1.125) node {$y^k=R_{C_1}(x^k)$}
		(3.05,-1.025) node {$z^k=R_{C_2}(y^k)$}
		(0.75,0.3) node {$x^{k+1}$};
		\draw (-1.5,0.25) node {$C_2$}
		(-1.1,-1.05) node {$C_1$};
		
		\draw [-stealth] (0.5,2) -- (1.7,1.085);
		\draw [-stealth] (1.78,1.025) -- (1.78,-0.925);
		\draw [dash pattern=on 3pt off 2pt] (0.5,2) -- (1.78,-1.025);
	\end{tikzpicture}
	\caption{One step of the  Douglas-Rachford method: Reflect-reflect-average; $x^{k+1}=\frac{1}{2}(x^k+z^k)=\frac{1}{2}(I+R_{C_2}R_{C_1})(x^k)$. }
	\label{figueDRmethod}
\end{figure}

However, when it comes to the $m$-sets $(m>2)$ convex feasibility problem,  the direct extension of the canonical Douglas-Rachford method $ x^{k+1}=\frac{1}{2}(I+R_{C_m}\cdots R_{C_2}$ $R_{C_1})(x^k) $ may fail even in simple instances. See an example from Artacho, Borwein, and Tam  \cite{Ara14} in Figure \ref{figue3sets}.
To overcome this problem, Borwein and Tam \cite{Bor14} devised the cyclic DR method
by using only two sets at a time
\begin{equation}\label{cyc-DR}
	x^{k+1}=\frac{1}{2}(I+R_{C_{t_{k+1}}}R_{C_{t_{k}}})(x^k),
\end{equation}
where $t_k=(k \ \operatorname{mod} \ m)+1$. This pattern can be extended by employing more sets at each time, and such ideas are detailed as the \textit{cyclic $r$-sets-DR method} \cite{aragon2019cyclic,censor2016new}.
While the conditions for weak convergence of the canonical DR method and the cyclic $r$-sets-DR method have been established, it remains difficult to obtain effective theoretical estimates of their convergence rate. Due to the complexity of the required computations,  existing estimates are not comparable to those of other state-of-the-art iterative methods \cite{Bui20, Bau04, Bau14, Bau15Lin, Lio79, Gal05}. Given this situation, we intend to introduce modern optimization techniques to improve the properties of the DR method.
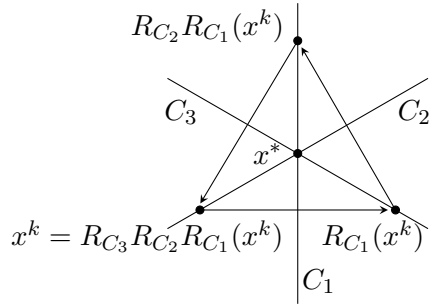
\begin{figure}[hptb]
	\centering
	\begin{tikzpicture}
		\draw (0,2) -- (0,-2);
		\draw (-1.732,1) -- (1.732,-1);
		\draw (-1.732,-1) -- (1.732,1);
		
		\filldraw (0,0) circle [radius=1.5pt]
		(0,1.5) circle [radius=1.5pt]
		(-1.3,-0.7516) circle [radius=1.5pt]
		(1.3,-0.7516) circle [radius=1.5pt];
		
		\draw (-0.4,0) node {$x^*$}
		(-2,-1.1) node {$x^k=R_{C_3}R_{C_2}R_{C_1}(x^k)$}
		(1.0,-1.1) node {$R_{C_1}(x^k)$}
		(-1.2,1.7) node {$R_{C_2}R_{C_1}(x^k)$};
		\draw (0.28,-1.7) node {$C_1$}
		(1.55,0.55) node {$C_2$}
		(-1.55,0.55) node {$C_3$};
		
		\draw [-stealth] (0,1.5) -- (-1.295,-0.665);
		\draw [-stealth] (-1.3,-0.7516) -- (1.2,-0.7516);
		\draw [-stealth] (1.3,-0.7516) -- (0.05,1.4134);
	\end{tikzpicture}
	\caption{ Failure of the 3-sets-DR iteration: The iteration $x^k:=\big(\frac{1}{2}I+\frac{1}{2}R_{C_3}R_{C_2}R_{C_1}\big)^k(x^0)$ may cycle.}
	\label{figue3sets}
\end{figure}

\subsection{Our contribution}

In this paper, we investigate the  DR  method with randomization for solving feasibility problems derived from linear systems. The main contributions of this work are as follows.

\begin{itemize}
	\item[1.] 
	We introduce the randomization technique to the $r$-sets-DR (RrDR) method for solving the feasibility problem \eqref{feaprob} and demonstrate that this approach is effective in simplifying the analysis of the $r$-sets-DR method and endows the otherwise divergent $r$-sets-DR method with a linear convergence.
	Specifically, we prove that the expected norm of the error $ \mathbb{E} [ \|x^k-x^* \|^2_2] $ of RrDR converges linearly, with the convergence rate depending only on the singular values of $ A $ and the relaxation parameter of the algorithm, but not on the size of $ A $.
	\item[2.] We then focus on a variant of the RrDR method with momentum (mRrDR), which is inspired by the success of Polyak's heavy ball momentum method \cite{polyak1964some,loizou2020momentum,morshed2020stochastic}.
	Although  the expected norm of the error $ \mathbb{E} [ \|x^k-x^* \|^2_2] $ of mRrDR also converges linearly, we find that its convergence rate is weaker than that of the RrDR method. Therefore, we also consider the norm of the expected error $\|\mathbb{E}[x^k-x^*]\|_2^2$, in terms of which mRrDR shows superiority over RrDR.
	To the best of our knowledge, this is the first study to investigate the momentum variants of the $r$-sets-DR method.
	\item[3.] Furthermore, we also compare our DR-originated method with other state-of-the-art iterative methods for linear systems. Moveover, we even show the superiority of the mRrDR over the  built-in function of {\sc Matlab} {\tt pinv} and {\tt lsqminnorm} when the number of the rows of $A$ is sufficiently larger than the number of columns.
\end{itemize}

\subsection{Notations}
\label{sect1-3}
We use $\mathbb{Z}_{+}$ to denote the set of positive integers.
For any random variables $\xi$ and $\zeta$, we use $\mathbb{E}[\xi]$ and $\mathbb{E}[\xi \lvert \zeta = \zeta_0]$ to denote the expectation of $\xi$ and the conditional expectation of $\xi$ given $\zeta = \zeta_0$.
For vector $x\in\mathbb{R}^n$, we use $x_i,x^\top$, and $\|x\|_2$ to denote the $i$-th entry, the transpose, and the Euclidean norm of $x$, respectively.
For matrix $A\in\mathbb{R}^{m\times n}$, we use $a_i$, $A_j$, $A^\top$, $A^\dagger$, $ \|A\|_2 $, $ \|A\|_F $, $\mbox{Row}(A)$, and $\mbox{Range}(A)$ to denote the $i$-th row, $j$-th column, the transpose, the Moore-Penrose pseudoinverse, the spectual norm, the Frobenius norm, the row space, and the range space of $ A $, repectively.
We use $A=U\Sigma V^\top$ to denote the singular value decomposition (SVD) of $A$, where $U\in\mathbb{R}^{m\times m}$, $\Sigma\in\mathbb{R}^{m\times n}$, and $V\in\mathbb{R}^{n\times n}$.
The nonzero singular values of $A$ are $\sigma_1(A)\geq\sigma_2(A)\geq\ldots\geq\sigma_{t}(A):=\sigma_{\min}(A)>0$, where $t$ is the rank of $A$
and  $\sigma_{\min}(A)$  is  the smallest nonzero singular value of $A$.

Throughout this paper, we use $x^*$ to denote a certain solution of the linear system \eqref{main-prob}, and for any $x^0\in\mathbb{R}^n$, we set
$x_0^*:=A^\dagger b+(I-A^\dagger A)x^0
\
\text{and}
\ x_{LN}^*:=A^\dagger b.$
We mention that $x_0^*$
is the orthogonal projection of $x^0$ onto the set
$
\{x\in\mathbb{R}^n| A x= b\},
$
and $x_{LN}^*$ is the least-norm solution of the linear system.

\subsection{Organization}
The remainder of the paper is organized as follows. 
In Section \ref{sec:rw} we will give a review of the related work.
Section \ref{sec:rrdr} and Section \ref{sec:mrrdr} describe randomized $r$-sets-Douglas-Rachford method and its momentum variant, respectively. Section \ref{sec:ne} reports the mentioned numerical experiments and Section \ref{sec:conc} concludes the paper. Proofs of all main results are provided in the appendix.

\section{Related work}\label{sec:rw}
One of the most widely used projection solvers for \eqref{main-prob} is the \emph{Kaczmarz} method \cite{Kac37}, which can be recognized as a special kind of the Dykstra's method. Starting from  $x^0\in\mathbb{R}^n$, the canonical Kaczmarz method constructs $x^{k+1}$ by
$$
x^{k+1}=P_{C_{i_k}}(x^k)=x^k-\frac{\langle a_{i_k},x^k\rangle-b_{i_k}}{\|a_{i_k}\|^2_2}a_{i_k},
$$
where $ {i_k} $ is cyclically selected from $ \{1,\cdots,m \} $. The sequence $\{x^k\}_{k=0}^\infty$ converges to $x_0^*$ but the convergence rate is hard to obtain. In the seminal paper \cite{Str09}, Strohmer and Vershynin first investigated the randomized Kaczmarz (RK) method. Specifically, they proved that if $ i_k $ is selected with probability $ \mbox{Pr}(i_k=i)=\frac{\|a_{i}\|_2^2}{\|A\|_F^2}$, the method converges linearly
$$	\mathbb{E} [ \|x^k-x_0^*\|^2_2]   \leq \left(1-\frac{\sigma^2_{\min}(A)}{\|A\|_F^2}\right)^k\|x^0-x_0^*\|_2^2.$$
Subsequently, there has been a significant amount of work on the development of Kaczmarz-type methods, with references available in \cite{Bai18Gre,Du20Ran,Du20Kac,Gow15,Gow19,Hef15,Jia17,Lev10,Ste20Ran,Ste20Wei,yuan2022adaptively,Raz19}.
Among these methods, we pay extra attention to the \textit{reflection Kaczmarz method} studied by Steinerberger in \cite{Ste20Sur}, which constructs $x^{k+1}$ via
$$
x^{k+1}=R_{C_{i_k}}(x^k)=x^k-2\frac{\langle a_{i_k},x^k\rangle-b_{i_k}}{\|a_{i_k}\|_2^2}a_{i_k}.
$$
Although the generated sequence $\{x^k\}_{k=0}^\infty$ does not converge to $x^*$ as their distance $\|x^k-x^*\|_2 \equiv \|x^0-x^*\|_2$ remains constant, Steinerberger \cite{Ste20Sur} proved the sublinear convergence of
the average $\frac{1}{k}\sum_{i=1}^kx^i$. Moreover, the author noted that with a particular restart strategy, the algorithm can reach the same complexity as the RK method. In fact, our work is inspired by such reflection approaches \cite{Ste20Sur,Sha21}, while instead of using the orthogonal projections or reflections only, we study the DR method which incorporates the reflection-average approach.

Recently, Hu and Cai \cite{Hu20} studied the canonical randomized Douglas-Rachford (RDR) method in a simple case where $r=2$.
They proved that $\mathbb{E}[x^k]\to x^*$ as $k\to\infty$, however, without convergence rates analysis.
We also note that such convergence does not lead to $\mathbb{E} [ \|x^k-x^*\|^2_2 ]\to 0$.
As an exemplification \cite{Wri20}, consider $x^k=x^*+r^k$, where $r^k$ are drawn i.i.d. from $N(0, I/\sqrt{n})$.
Such a sequence satisfies $\mathbb{E}[x^k]= x^*$, while $\mathbb{E} [ \|x^k-x^*\|^2_2 ] \equiv 1$.

In recent years, the momentum acceleration technique has been recognized as an effective approach to improving the performance of optimization methods. For instance, the stochastic gradient descent (SGD) method \cite{robbins1951stochastic} can be incorporated with the heavy ball momentum \cite{polyak1964some}, deriving the well-known stochastic heavy ball momentum (SHBM) method \cite{sebbouh2021almost,garrigos2023handbook} with enhanced performance for solving large-scale optimization problems.
The RK method, as a special case of SGD (see Section \ref{section-RrDR}), was studied in the momentum framework \cite{loizou2020momentum,morshed2020stochastic}.
Specifically, Loizou and Richt{\'a}rik \cite{loizou2020momentum} investigated the SHBM method for solving stochastic problems that are reformulated from consistent linear systems, and established the global, non-asymptotic linear convergence rates of the proposed methods. In \cite{Gow15}, Gower and Richt\'{a}rik developed the sketch-and-project method, a versatile randomized iterative method which includes the RK algorithm as a special case, for solving consistent linear systems.
The momentum variants of the sketch-and-project method has been investigated in \cite{P2017Stochastic,loizou2021revisiting}. Enlightened by the success of the heavy ball momentum technique in these stochastic methods, we intend to employ such momentum acceleration in the randomized DR method.

We note that another popular momentum acceleration is the Nesterov's momentum \cite{nesterov1983method,nesterov2003introductory}, leading to the famous accelerated gradient descent (AGD) method \cite{beck2009fast}. Recently, variants of Nesterov's momentum has also been introduced for the acceleration of stochastic optimization algorithms \cite{lan2020first}.  In \cite{liu2016accelerated}, Liu and Wright applied the acceleration scheme of Nesterov to the RK method.
It has also been applied to the sampling Kaczmarz Motzkin (SKM) algorithm for linear feasibility problems \cite{morshed2020accelerated,morshed2021sampling}.

\section{Randomized Douglas-Rachford method}\label{sec:rrdr}
\label{section-RrDR}
In this section, we propose our randomized Douglas-Rachford method for solving linear systems and analyze its convergence property. Actually, for the canonical DR algorithm, there are a lot of modifications and relaxations in the literature. An important and useful one is the approach studied by Eckstein and Bertsekas \cite{Eck92}, known as the \emph{generalized Douglas-Rachford} (GDR) method
$$
x^{k+1}=\big((1-\alpha) I+\alpha R_{C_2}R_{C_1}\big)(x^k), \ \alpha\in(0,1).
$$
It reduces to the DR method when $\alpha=\frac{1}{2}$.
The significance of introducing the relaxation parameter $ \alpha $ is that, in general, the performance of the optimization algorithms can be practically accelerated under overrelaxation conditions ($ \alpha > \frac{1}{2} $)\cite{Str09,Nec19}.

Based on the variants of the DR method for $ m$-sets condition discussed in Section \ref{sect:DR}, we consider a more universal setting of the DR method, called the extrapolated $r$-sets-DR method. At the $ k$-th iteration, the algorithm chooses $ r $ sets $ C_{j_{k_1}}, \ldots, C_{j_{k_r}} $ and updates $ x^{k} $ via
$$
x^{k+1}=\big((1-\alpha)I+\alpha R_{C_{j_{k_r}}}R_{C_{j_{k_{r-1}}}}\ldots R_{C_{j_{k_1}}}\big)(x^k),
$$
where $\alpha\in(0,1)$ is the extrapolation or relaxation parameter. The criterion of the selection of the sets
$ C_{j_{k_1}}, \cdots, C_{j_{k_r}} $ is where the randomization is adopted.
We prove that if the set $C_i$ is selected with probability proportional to $\|a_i\|^2_2$, the method converges linearly in expectation.

A description of the iterative
procedure is presented in Algorithm \ref{r-RDRK}.
To demonstrate the algorithm in a simple and straightforward form,
the description is in terms of $a_1,\ldots, a_m$ and $b=(b_1,\ldots,b_m)^\top$ rather than the operators $R_{C_i},i=1,\ldots,m$. It can be seen  that not only the canonical RDR method is included in this framework with $ r=2 $ and $ \alpha=\frac{1}{2} $, but also the RK method can be recovered with $ r=1 $ and $ \alpha=\frac{1}{2} $.

Additionally, we assume that $\mbox{rank}(A)\geq 2$. Suppose that $ \operatorname{rank}(A) = 1 $ and $r$ is an even number, now the extrapolated $r$-sets-DR method just reflects the iteration back and forth through the same hyperplane. So the iteration sequence of the method satisfies $x^0=x^1=\ldots=x^k$ and the method fails. In fact, $\mbox{rank}(A)=1$ is a trivial case where one can get the solution by only one step of projection.

\begin{algorithm}[htpb]
	\caption{Randomized $r$-sets-Douglas-Rachford (RrDR) method \label{r-RDRK}}
	\begin{algorithmic}
		\Require
		$A\in \mathbb{R}^{m\times n}$, $b\in \mathbb{R}^m$, $r\in\mathbb{Z}_{+}$, $k=0$,  extrapolation/relaxation parameter $\alpha\in(0,1)$ and an initial $x^0\in \mathbb{R}^{n}$.
		\begin{enumerate}
			\item[1:] Set $z^{k}_0:=x^k$.
			\item[2:] {\bf for $\ell=1,\ldots,r$ do}
			\item[3:] \ \ \ Select $j_{k_{\ell}}\in\{1,\ldots,m\}$ with probability $\mbox{Pr}(\mbox{row}=j_{k_{\ell}})=\frac{\|a_{j_{k_{\ell}}}\|^2_2}{\|A\|_{F}^2}$.
			\item[4:] \ \ \ Compute
			$$
			z_{\ell}^{k}:=z_{\ell-1}^k-2\frac{\langle a_{j_{k_{\ell}}},z_{\ell-1}^k\rangle-b_{j_{k_{\ell}}}}{\|a_{j_{k_\ell}}\|^2_2}a_{j_{k_{\ell}}}.
			$$
			\item[5:] {\bf end for}
			\item[6:] Update
			$$
			x^{k+1}:=(1-\alpha) x^k+\alpha z_{r}^k.
			$$
			\item[7:] If the stopping rule is satisfied, stop and go to output. Otherwise, set $k=k+1$ and return to Step $1$.
		\end{enumerate}
		
		\Ensure
		The approximate solution $ x^k $.
	\end{algorithmic}
\end{algorithm}

Finally, we give some comments on the connection between Algorithm  \ref{r-RDRK} and  the SGD method \cite{hardt2016train,robbins1951stochastic,ma2017stochastic}.
The RK method can be viewed as SGD applied to the  following least-squares problem
\begin{equation}\label{quar-f}
	\min\limits_{x\in\mathbb{R}^n}f(x):=\frac{1}{2 m}\|A x-b\|_2^2=\frac{1}{m} \sum_{i=1}^m f_i(x),
\end{equation}
where $f_i(x)=\frac{1}{2}\left( \langle a_i,x \rangle -b_i\right)^2$. For convenience, we assume that $A \in \mathbb{R}^{m \times n}$ is normalized such that $\|a_i\|^2_2=1$.
The SGD method solves \eqref{quar-f} using unbiased estimates for the gradient of the objective function. Particularly, one can employ $\nabla f_i(x)$ since $ \mathbb{E} [\nabla f_i(x)] = \nabla f(x) $.
At the $k$-th iteration, SGD draws $\nabla f_{i_k}(x)$ and  updates $x^k$ via
\begin{equation}\label{SGD}
	x^{k+1}=x^k-\lambda_k \nabla f_{i_k}(x^k),
\end{equation}
where $\lambda_k$ is an appropriately chosen stepsize.
Since here $\nabla f_{i_k}(x^k)=\left( \langle a_{{i_k}},x^k \rangle -b_{i_k}\right)a_{i_k}$, one can recover the RK method by applying SGD  to the problem \eqref{quar-f}.
Based on this fact, we reconsider Algorithm \ref{r-RDRK} with $\alpha=\frac{1}{2}$ and $r=2$, where we have $z_1^k=x^k-2\nabla f_{j_{k_1}}(x^k)$, $z_2^k=z_1^k-2\nabla f_{j_{k_2}}(z_1^k)$, and hence
$$
x^{k+1}=x^k-\nabla f_{j_{k_1}}(x^k)-\nabla f_{j_{k_2}}\left(x^k-2\nabla f_{j_{k_1}}(x^k)\right).
$$
In fact, the last term $\nabla f_{j_{k_2}}\left(x^k-2\nabla f_{j_{k_1}}(x^k)\right)$ is known as the \emph{stochastic extragradient} (SEG) \cite{gorbunov2022stochastic}. Therefore, we state that Algorithm \ref{r-RDRK} can be regarded as a combination of SGD and SEG.

\subsection{ Convergence of iterates}
\label{sect-rrd-c}
In this subsection, we analyze the convergence properties of Algorithm \ref{r-RDRK}. The first result demonstrates that the expected norm of the error $\mathbb{E} [ \|x^k-x_{0}^*\|^2_2]$ converges linearly.
\begin{theorem}
	\label{main-ThmrRDR}
	Suppose that  the linear system $Ax=b$ is consistent, $\alpha\in(0,1)$, and $x^0\in\mathbb{R}^n$ is an arbitrary initial vector.
	Let $x_0^*=A^{\dagger}b+(I-A^\dagger A)x^0$. Then the iteration sequence $\{x^k\}_{k=0}^{\infty}$ generated by Algorithm \ref{r-RDRK} satisfies
	$$\mathbb{E} [ \|x^k-x_{0}^*\|^2_2]\leq\left(\alpha^2+(1-\alpha)^2+2\alpha(1-\alpha)\left(1-2\frac{\sigma_{\min}^2(A)}{\|A\|^2_F}\right)^r
	\right)^k\|x^0-x_{0}^*\|^2_2.
	$$
\end{theorem}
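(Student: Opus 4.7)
The plan is to track $\|x^k-x_0^*\|_2^2$ in expectation by exploiting the isometry property of reflections. Since $Ax_0^*=b$, we have $x_0^*\in C_i$ for every $i$, so each reflection is an isometry about $x_0^*$: $\|R_{C_i}(x)-x_0^*\|_2=\|x-x_0^*\|_2$. Iterating this through the inner loop of Algorithm \ref{r-RDRK} gives $\|z_r^k-x_0^*\|_2=\|x^k-x_0^*\|_2$ deterministically. Writing the outer update as $x^{k+1}-x_0^* = (1-\alpha)(x^k-x_0^*)+\alpha(z_r^k-x_0^*)$ and squaring,
$$\|x^{k+1}-x_0^*\|_2^2 = \bigl(\alpha^2+(1-\alpha)^2\bigr)\|x^k-x_0^*\|_2^2 + 2\alpha(1-\alpha)\langle x^k-x_0^*,\,z_r^k-x_0^*\rangle.$$

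Next I would compute the conditional expectation $\mathbb{E}[z_r^k\mid x^k]$. For a single reflection, $R_{C_i}(x)-x_0^* = \bigl(I-2a_ia_i^\top/\|a_i\|_2^2\bigr)(x-x_0^*)$; averaging over the sampling probabilities $\|a_i\|_2^2/\|A\|_F^2$ gives the symmetric linear operator $M:=I-2A^\top A/\|A\|_F^2$. Because the $r$ inner reflections are sampled independently, this composes to $\mathbb{E}[z_r^k-x_0^*\mid x^k]=M^r(x^k-x_0^*)$. Substituting,
$$\mathbb{E}\bigl[\|x^{k+1}-x_0^*\|_2^2\,\big|\,x^k\bigr] = \bigl(\alpha^2+(1-\alpha)^2\bigr)\|x^k-x_0^*\|_2^2 + 2\alpha(1-\alpha)\,(x^k-x_0^*)^\top M^r(x^k-x_0^*).$$

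The main obstacle is bounding the quadratic form $(x^k-x_0^*)^\top M^r(x^k-x_0^*)$. The operator $M$ has eigenvalues $1-2\sigma_\ell^2(A)/\|A\|_F^2$ on $\mathrm{Row}(A)$ and eigenvalue $1$ on $\mathrm{Null}(A)$, so a contraction estimate only works after restricting to $\mathrm{Row}(A)$. I would establish the invariant $x^k-x_0^*\in\mathrm{Row}(A)$ by induction on $k$: the base case uses $x^0-x_0^* = A^\dagger A x^0 - A^\dagger b \in \mathrm{Range}(A^\top)$, and the inductive step follows because each increment $z_\ell^k-z_{\ell-1}^k$ is a scalar multiple of $a_{j_{k_\ell}}\in\mathrm{Row}(A)$, and hence $x^{k+1}-x^k\in\mathrm{Row}(A)$. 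The standing hypothesis $\mathrm{rank}(A)\geq 2$ implies $\|A\|_F^2\geq\sigma_1^2(A)+\sigma_{\min}^2(A)$, which is exactly what is needed to verify $|1-2\sigma_\ell^2(A)/\|A\|_F^2|\leq 1-2\sigma_{\min}^2(A)/\|A\|_F^2$ for every nonzero singular value; taking $r$-th powers (checking both parities separately) bounds the spectrum of $M^r$ on $\mathrm{Row}(A)$ by $\bigl(1-2\sigma_{\min}^2(A)/\|A\|_F^2\bigr)^r$.

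Combining these steps produces the one-step contraction
$$\mathbb{E}\bigl[\|x^{k+1}-x_0^*\|_2^2\,\big|\,x^k\bigr] \leq \Bigl(\alpha^2+(1-\alpha)^2+2\alpha(1-\alpha)\bigl(1-2\sigma_{\min}^2(A)/\|A\|_F^2\bigr)^r\Bigr)\|x^k-x_0^*\|_2^2,$$
and the claim follows by taking full expectations and unfolding this recursion $k$ times via the tower property.
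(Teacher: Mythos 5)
Your proposal is correct and mirrors the paper's own argument step for step: the isometry of the composed reflections about $x_0^*$ (the paper's Lemma \ref{lemma-61}), the squared-norm expansion of the averaged update (Lemma \ref{lemma-main-rRDR}), the conditional expectation collapsing to $\bigl(I-2A^\top A/\|A\|_F^2\bigr)^r$ by independence of the inner samples, the row-space invariant $x^k-x_0^*\in\mathrm{Row}(A)$ (Lemma \ref{lemma-inrowspace}), and the spectral bound on that subspace. If anything, you are slightly more explicit than the paper in justifying the inequality $(x^k-x_0^*)^\top M^r(x^k-x_0^*)\leq\bigl(1-2\sigma_{\min}^2(A)/\|A\|_F^2\bigr)^r\|x^k-x_0^*\|_2^2$ for both parities of $r$ via $\mathrm{rank}(A)\geq 2$, which the paper asserts without detail.
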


\begin{remark}
	If the initial vector $x^0\in\mbox{Row}(A)$, then we have $x_0^*=A^{\dagger}b=x^*_{LN}$. This implies that the iteration sequence $ \{ x^k \}_{k=0}^\infty $ generated by Algorithm \ref{r-RDRK} now converges to the least-norm solution $x^*_{LN}$.  Additionally,  we note that the upper bound in Theorem \ref{main-ThmrRDR} is tight. Please refer to Remark \ref{xie-tight} for more details.
\end{remark}

\begin{remark}
	We note that the almost sure convergence of the iterates of the stochastic algorithms has been extensively studied \cite{sebbouh2021almost,combettes2015stochastic,briceno2022random}. By utilizing Lemma 2.1 in \cite{briceno2022random} and Theorem \ref{main-ThmrRDR}, it can be easily deduced that the iteration sequence $ \{ x^k \}_{k=0}^\infty $ generated by Algorithm \ref{r-RDRK}  converges almost surely to $x^*_0$.
\end{remark}

\begin{remark}
	Let $\widetilde{\alpha}:=2\alpha\in(0,2)$. If $r=1$, then Algorithm \ref{r-RDRK} becomes $x^{k+1}=x^k-\widetilde{\alpha}\frac{\langle a_{i_k},x^k\rangle-b_{i_k}}{\|a_{i_k}\|^2_2}a_{i_k}$ and Theorem \ref{main-ThmrRDR} leads to
		$$	\mathbb{E} [ \|x^k-x_{0}^*\|^2_2 ] \leq\left(1-\frac{\widetilde{\alpha}(2-\widetilde{\alpha})\sigma_{\min}^2(A)}{\|A\|^2_F}
		\right)^k\|x^0-x_{0}^*\|^2_2,$$
	which is exactly the conclusion obtained in \cite[Theorem 1]{Cai12} for the RK method with relaxation.
\end{remark}

We now make a comparison between our method and the RK method in terms of convergence rate. Since the computational cost of Algorithm \ref{r-RDRK} at each step is about $r$-times as expensive as that of the RK method, the comparison can be expressed as
\begin{equation}\label{rk-rrdr-xie}
	\left(1-\frac{4\alpha(1-\alpha)\sigma_{\min}^2(A)}{\|A\|^2_F}\right)^r\leq
	\alpha^2+(1-\alpha)^2+2\alpha(1-\alpha)\left(1-2\frac{\sigma_{\min}^2(A)}{\|A\|^2_F}\right)^r.
\end{equation}
Indeed, let $p=1-\frac{4\alpha(1-\alpha)\sigma_{\min}^2(A)}{\|A\|^2_F}$ and $q=1-2\frac{\sigma_{\min}^2(A)}{\|A\|^2_F}$, then for any fixed $\alpha\in(0,1)$, one can verify that
$$
g(r):=\frac{\alpha^2+(1-\alpha)^2}{p^r}+2\alpha(1-\alpha)\left(\frac{q}{p}\right)^r
$$
is monotonically increasing, i.e. $g(r)\geq g(1)=1$ so that \eqref{rk-rrdr-xie} holds.
This implies that the RK method is theoretically better than  Algorithm \ref{r-RDRK}.
Nevertheless, numerical experiments demonstrate that, with an appropriate $r>1$, Algorithm \ref{r-RDRK} is more efficient and requires fewer row-actions than the RK method (see Section \ref{subsection:cr}).

Next, let us consider the convergence of the norm of the expected error $\|\mathbb{E}[x^k-x^*_0]\|_2^2$.
\begin{theorem}
	\label{main-ThmrRDRv2}
	Suppose that  the linear system $Ax=b$ is consistent, $x^0\in\mathbb{R}^n$ is an arbitrary initial vector, and the relaxation parameter satisfies
	$$
	0<\alpha<\min\left\{1, \frac{1}{1-\left(1-2\sigma_{\max}^2(A)/\|A\|^2_F\right)^{r}}\right\}.$$ Let $x_0^*=A^{\dagger}b+(I-A^\dagger A)x^0$. Then the iteration sequence $\{x^k\}_{k=0}^{\infty}$ generated by Algorithm \ref{r-RDRK} satisfies
	$$
	\| \mathbb{E} [ x^k-x_{0}^* ] \|^2_2 \leq \left((1-\alpha)+\alpha\left(1-2\frac{\sigma_{\min}^2(A)}{\|A\|^2_F}\right)^r
	\right)^{2k}\|x^0-x_0^*\|_2^2.
	$$
\end{theorem}

Since $ (1-\alpha)+\alpha\left(1-2\sigma_{\min}^2(A)/\|A\|^2_F\right)^r \in (0,1)$, we know that $\|\mathbb{E}[x^k-x^*_0]\|_2^2\to 0$  as $k\to \infty$.
One may be confused by the similarity between the quantity $\|\mathbb{E}[x^k-x^*_0]\|_2^2$ in Theorem \ref{main-ThmrRDRv2} and $ \mathbb{E} [ \|x^k-x^*_0 \|^2_2] $ in Theorem \ref{main-ThmrRDR}. Actually, the convergence of the former is much weaker than that of the later.
Supposing $\mathbb{E}[x]$ is bounded for all $ x \in \mathbb{R}^n $, by definition, we have
$$
\mathbb{E} [ \|x-x^*_0 \|^2_2] =\left\|\mathbb{E} \left[ x-x_0^{*} \right] \right\|_2^{2}
+\mathbb{E} \left[ \|x-\mathbb{E}[x]\|_2^{2}\right],
$$
which implies that the convergence of $ \mathbb{E} [ \|x-x^*_0 \|^2_2] $ leads to that of $ \left\|\mathbb{E} \left[ x-x_0^{*} \right] \right\|_2^{2} $, but not vice versa. The reason why it is also considered is that we aim to systematically compare RrDR method with its momentum variant on both quantities in Section \ref{sec:mrrdr}.

\subsection{Convergence direction}
Inspired by the recent work of Steinerberger \cite{Ste20Ran}, in this subsection we consider the convergence direction of Algorithm \ref{r-RDRK}.
The following result shows different convergence rates along different singular vectors of $ A $.

\begin{theorem}
	\label{main-ThmrRDRK}
	Suppose that $x^*$ is a solution of the linear system $Ax=b$ and $\alpha\in(0,1)$.
	For any $v_i$ being the (right) singular vector of $A$ associated to the singular
	value $\sigma_{i}(A)$, the iteration sequence $\{x^k\}_{k=0}^{\infty}$ generated by Algorithm \ref{r-RDRK} satisfies
	$$
	\mathbb{E} \left[ \left\langle x^k-x^*,v_{i}\right\rangle \right] =\left((1-\alpha)+\alpha\left(1-\frac{2\sigma^2_{i}(A)}{\|A\|^2_F}\right)^r\right)^k\left\langle x^0-x^*,v_{i}\right\rangle.
	$$
\end{theorem}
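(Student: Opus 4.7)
\medskip

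\noindent\textbf{Proof Proposal.} The plan is to reduce the quantity $\langle x^k-x^*,v_\ell\rangle$ to a scalar linear recursion whose contraction factor can be read off from the eigenstructure of $A^\top A$. The entire argument hinges on the fact that every reflection used in Algorithm~\ref{r-RDRK} fixes $x^*$, so that in the shifted coordinate $y\mapsto y-x^*$ each inner step is a linear map.

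First I would introduce, for each index $i$, the Householder-type reflection matrix $M_i:=I-\tfrac{2\,a_ia_i^\top}{\|a_i\|_2^2}$, which is symmetric. Using the consistency of $Ax=b$, so that $\langle a_i,x^*\rangle=b_i$, I would verify that the inner update in Step~4 of Algorithm~\ref{r-RDRK} may be rewritten as $z_\ell^k-x^*=M_{j_{k_\ell}}(z_{\ell-1}^k-x^*)$. Iterating over $\ell=1,\dots,r$ and then folding in Step~6 gives
\[
x^{k+1}-x^*=\bigl[(1-\alpha)I+\alpha\,M_{j_{k_r}}M_{j_{k_{r-1}}}\cdots M_{j_{k_1}}\bigr](x^k-x^*).
\]

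Next I would take inner products with $v_\ell$ and exploit the symmetry $M_i^\top=M_i$ to move the composed operator over to act on $v_\ell$. Taking the conditional expectation given $x^k$, and observing that the selections $j_{k_1},\dots,j_{k_r}$ are independent draws from the prescribed distribution, I would use the elementary fact that for independent matrix-valued random variables the expectation of a product equals the product of expectations (entrywise). This yields
\[
\mathbb{E}\bigl[M_{j_{k_1}}\cdots M_{j_{k_r}}\,v_\ell\,\bigm|\,x^k\bigr]=\bigl(\mathbb{E}[M]\bigr)^r v_\ell,
\qquad \mathbb{E}[M]=\sum_{i=1}^m\frac{\|a_i\|_2^2}{\|A\|_F^2}M_i=I-\frac{2A^\top A}{\|A\|_F^2}.
\]
Since $A^\top A v_\ell=\sigma_\ell^2 v_\ell$, applying $(\mathbb{E}[M])^r$ to $v_\ell$ multiplies it by $(1-2\sigma_\ell^2/\|A\|_F^2)^r$. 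Combining with the $(1-\alpha)I$ piece gives the one-step recursion
\[
\mathbb{E}\bigl[\langle x^{k+1}-x^*,v_\ell\rangle \,\bigm|\, x^k\bigr]=\Bigl((1-\alpha)+\alpha\bigl(1-\tfrac{2\sigma_\ell^2}{\|A\|_F^2}\bigr)^r\Bigr)\langle x^k-x^*,v_\ell\rangle.
\]
Taking total expectations, applying the tower property, and iterating $k$ times yields the stated closed form.

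The main obstacle, conceptually, is that the random matrices $M_{j_{k_1}},\dots,M_{j_{k_r}}$ do not commute, so one might worry that a single expected operator $\mathbb{E}[M]$ cannot govern the composition. The resolution is that commutativity is not needed: what is needed is independence of the factors at a single outer iteration, which the algorithm guarantees, together with the symmetry of each $M_i$ so that one may transpose the product onto the fixed eigenvector $v_\ell$. Once these two observations are in place, every other step is routine algebra and induction on $k$. \qed
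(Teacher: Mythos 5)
Your proposal is correct and follows essentially the same route as the paper: the paper's Lemma~\ref{lemma-613} likewise linearizes the shifted iterate, uses independence of the $r$ row selections to reduce the expected product of reflections to $\bigl(I-\tfrac{2A^\top A}{\|A\|_F^2}\bigr)^r$, and then reads off the eigenvalue $\bigl(1-\tfrac{2\sigma_\ell^2}{\|A\|_F^2}\bigr)^r$ via the SVD. The only cosmetic difference is that you transpose the symmetric product directly onto $v_\ell$ rather than diagonalizing the whole recursion in the coordinates $V^\top\mathbb{E}[x^k-x^*]$; both are valid and equivalent.
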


\begin{remark}
	Unlike the cases in Theorem \ref{main-ThmrRDR} and \ref{main-ThmrRDRv2}, $x^*$ in Theorem \ref{main-ThmrRDRK} is an arbitrary solution of the linear system. Certainly, one can take  $x^*_0$.  We also note that such convergence is weaker than that of $\mathbb{E}[ \|x^k-x^*\|^2_2 ]$.
	Consider the example given in Section \ref{sect:DR} where $x^k=x^*+r^k$ with $r^k$ being drawn i.i.d. from $N(0, I/\sqrt{n})$.
	Such a sequence satisfies $\mathbb{E} \left[ \left\langle x^k-x^*,v_{i}\right\rangle \right]=0$, while $\mathbb{E} [ \|x^k-x^*\|^2_2 ] \equiv 1$.
\end{remark}

Theorem \ref{main-ThmrRDRK} exhibits that the RrDR method converges exponentially along different singular directions of $ A $ at different rates depending on the singular values.
It accounts for the typical semiconvergence phenomenon.
That is, the residual $ \|Ax^{k}-b\|_2^2 $ decays faster at the beginning, but then gradually stagnates.
Recently, the semiconvergence phenomenon has been exploited by Wu and Xiang \cite{Wu21} for the randomized row iterative method \cite{Gow15}. They generalized the study in \cite{Jia17} and split the total error into the low- and high-frequency solution spaces.
In the literature, several acceleration techniques have been proposed to avoid semiconvergence phenomenon, for instance, the weighted version \cite{Ste20Wei} and momentum acceleration technique \cite{loizou2020momentum}. 
In this paper, we will introduce the momentum acceleration technique to the RrDR method.

\section{Momentum acceleration}\label{sec:mrrdr}
\label{section-mRrDR}

In this section, we provide the momentum induced RrDR (mRrDR) method for solving feasibility problem derived from  linear systems. First, we give a short description of the heavy ball momentum method. Consider the  unconstrained minimization problem
$
\min\limits_{x\in\mathbb{R}^n} f(x),
$
where $f$ is a differentiable convex function.
To solve the problem, the gradient descent method
with momentum (HBM) of Polyak \cite{polyak1964some} takes the form
\begin{equation}\label{hbm}
	x^{k+1}=x^{k}-\alpha \nabla f\big(x^{k}\big)+\beta\big(x^{k}-x^{k-1}\big),
\end{equation}
where $\alpha>0$ is the stepsize, $\beta$ is the momentum parameter, and $\nabla f\left(x^{k}\right)$ denotes the gradient of $f$ at $x^k$.  When $\beta=0$, the method reduces to the gradient descent method. If the full gradient in \eqref{hbm} is replaced by the unbiased estimate of the gradient, then it becomes the  stochastic
HBM (SHBM) method.
In \cite{ghadimi2015global}, the authors showed that the deterministic HBM method
converges globally and sublinearly for smooth and convex functions. For the SHBM, one may refer to \cite{sebbouh2021almost,garrigos2023handbook} for more discussions.

Inspired by the success of the SHBM method, in this section  we incorporate the HBM into our RrDR method, obtaining the mRrDR method described in Algorithm \ref{r-mRDRK}. To the best of our knowledge, this is the first time that momentum variants of the $r$-sets-DR method are investigated. In the rest of this section, we will study the convergence properties of the proposed mRrDR method.

\begin{algorithm}[htpb]
	\caption{Randomized $r$-sets-Douglas-Rachford with momentum (mRrDR) \label{r-mRDRK}}
	\begin{algorithmic}
		\Require
		$A\in \mathbb{R}^{m\times n}$, $b\in \mathbb{R}^m$, $r\in\mathbb{Z}_{+}$, $k=1$,  extrapolation/relaxation parameter $\alpha$, the heavy ball momentum parameter $\beta$, and initial vectors $x^1,x^0\in \mathbb{R}^{n}$.
		\begin{enumerate}
			\item[1:] Set $z^{k}_0:=x^k$.
			\item[2:] {\bf for $\ell=1,\ldots,r$ do}
			\item[3:] \ \ \ Select $j_{k_{\ell}}\in\{1,\ldots,m\}$ with probability $\mbox{Pr}(\mbox{row}=j_{k_{\ell}})=\frac{\|a_{j_{k_{\ell}}}\|^2_2}{\|A\|_{F}^2}$.
			\item[4:] \ \ \ Compute
			$$
			z_{\ell}^{k}:=z_{\ell-1}^k-2\frac{\langle a_{j_{k_{\ell}}},z_{\ell-1}^k\rangle-b_{j_{k_{\ell}}}}{\|a_{j_{k_\ell}}\|^2_2}a_{j_{k_{\ell}}}.
			$$
			\item[5:] {\bf end for}
			\item[6:] Update
			$$
			x^{k+1}:=(1-\alpha) x^k+\alpha z_{r}^k+\beta(x^k-x^{k-1}).
			$$
			\item[7:] If the stopping rule is satisfied, stop and go to output. Otherwise, set $k=k+1$ and return to Step $1$.
		\end{enumerate}
		
		\Ensure
		The approximate solution $ x^k $.
	\end{algorithmic}
\end{algorithm}

\subsection{Convergence of iterates}
We have the following convergence result for Algorithm \ref{r-mRDRK}.
\begin{theorem}\label{THMfmm}
	Suppose that  the linear system $Ax=b$ is consistent,
	$x^1=x^0\in\mathbb{R}^n$ are arbitrary initial vectors, and $x_0^*=A^{\dagger}b+(I-A^\dagger A)x^0$.	
	Assume $0<\alpha<1$, $\beta\geq0$ and that
	$$
	\gamma_1:=\alpha^2+(1-\alpha)^2+\left(2\alpha(1-\alpha)+3\alpha\beta\right)
	\left(1-2\frac{\sigma_{\min}^2(A)}{\|A\|^2_F}\right)^r
	+2\beta^2+3(1-\alpha)\beta
	$$
	and
	$$
	\gamma_2:=2\beta^2+(1-\alpha)\beta+\alpha\beta\left(1-2\frac{\sigma_{\min}^2(A)}{\|A\|^2_F}\right)^r
	$$
	satisfy $\gamma_1+\gamma_2<1$.
	Then the iteration sequence  $\{x^k\}_{k=0}^{\infty}$ generated by Algorithm \ref{r-mRDRK} satisfies
	$$
	\mathbb{E} [ \|x^{k+1}-x_{0}^*\|^2_2 ] \leq q^k(1+\tau)\|x^0-x_{0}^*\|^2_2,\ \forall \ k\geq 0,
	$$
	where $q=\frac{\gamma_1+\sqrt{\gamma_1^2+4\gamma_2}}{2}$ and $\tau=q-\gamma_1\geq 0$. Moreover,
	$\gamma_1+\gamma_2\leq q<1$.
\end{theorem}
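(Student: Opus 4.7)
The plan is to derive a two-step linear recursion $F_{k+1} \leq \gamma_1 F_k + \gamma_2 F_{k-1}$ for $F_k := \mathbb{E}\bigl[\|x^k - x_0^*\|_2^2\bigr]$, and then solve it by the factorization trick that is standard for heavy-ball-type convergence analyses.

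\emph{Set-up.} I write the inner for-loop of Algorithm~\ref{r-mRDRK} as a composition of reflections $T^k := R_{C_{j_{k_r}}}\cdots R_{C_{j_{k_1}}}$, so that $z_r^k = T^k x^k$. Since every solution of $Ax=b$ is a fixed point of each $R_{C_i}$, we have $T^k x_0^* = x_0^*$, and the update rewritten in terms of $e^k := x^k - x_0^*$ becomes
\begin{equation*}
e^{k+1} = B^k e^k + \beta(e^k - e^{k-1}), \qquad B^k := (1-\alpha)I + \alpha T^k.
\end{equation*}
A short induction using that each reflection step adds a multiple of some $a_{j_{k_i}}$ shows $e^k \in \mbox{Row}(A)$ for all $k$, which is needed for the spectral bound on $\mathbb{E}[T^k]$ used below.

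\emph{The recursion.} Expanding $\|e^{k+1}\|_2^2$ and taking conditional expectation given $x^k, x^{k-1}$ gives three pieces:
\begin{equation*}
\mathbb{E}\bigl[\|e^{k+1}\|_2^2 \,\big|\, x^k, x^{k-1}\bigr] = \mathbb{E}\bigl[\|B^k e^k\|_2^2 \,\big|\, e^k\bigr] + 2\beta\bigl\langle \mathbb{E}[B^k]e^k, e^k - e^{k-1}\bigr\rangle + \beta^2\|e^k - e^{k-1}\|_2^2.
\end{equation*}
For the first term I reuse the one-step estimate implicit in the proof of Theorem~\ref{main-ThmrRDR}, giving the bound $\bigl(\alpha^2 + (1-\alpha)^2 + 2\alpha(1-\alpha)(1 - 2\sigma_{\min}^2(A)/\|A\|_F^2)^r\bigr)\|e^k\|_2^2$. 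For the cross term the essential fact, as in the proof of Theorem~\ref{main-ThmrRDRv2}, is that $\mathbb{E}[T^k] = (I - 2A^\top A/\|A\|_F^2)^r$, whose operator norm on $\mbox{Row}(A)$ is at most $\delta_2^r$; hence $\|\mathbb{E}[B^k]e^k\|_2 \leq (1 - \alpha + \alpha\delta_2^r)\|e^k\|_2$. Combining Cauchy--Schwarz with the AM--GM inequality $2xy \leq x^2 + y^2$ on the two summands $\langle \mathbb{E}[B^k]e^k, e^k\rangle$ and $-\langle \mathbb{E}[B^k]e^k, e^{k-1}\rangle$ of the inner product, and bounding the remaining piece by $\beta^2\|e^k - e^{k-1}\|_2^2 \leq 2\beta^2(\|e^k\|_2^2 + \|e^{k-1}\|_2^2)$, then taking total expectation, yields the desired recursion $F_{k+1} \leq \gamma_1 F_k + \gamma_2 F_{k-1}$ with $\gamma_1, \gamma_2$ as stated. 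The main technical obstacle is precisely this bookkeeping: the exact constants $3$ and $2$ appearing in front of $\beta(1-\alpha+\alpha\delta_2^r)$ in $\gamma_1$ and $\gamma_2$ are sensitive to how one distributes the mixed term $2\beta\langle \mathbb{E}[B^k]e^k, e^{k-1}\rangle$ between $\|e^k\|_2^2$ and $\|e^{k-1}\|_2^2$, and hitting the stated $\gamma_2$ exactly may require a deliberately non-tight AM--GM split.

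\emph{Solving the recursion.} I would look for $q, \tau \geq 0$ satisfying $F_{k+1} + \tau F_k \leq q\bigl(F_k + \tau F_{k-1}\bigr)$, which forces $q = \gamma_1 + \tau$ and $q\tau \geq \gamma_2$. Equality in the latter gives the quadratic $q^2 - \gamma_1 q - \gamma_2 = 0$ with positive root $q = \bigl(\gamma_1 + \sqrt{\gamma_1^2 + 4\gamma_2}\bigr)/2$ and $\tau = q - \gamma_1 \geq 0$. Iterating the factorized inequality and using the initial condition $x^1 = x^0$ (so $F_1 = F_0$) gives
\begin{equation*}
F_{k+1} \leq F_{k+1} + \tau F_k \leq q^k(F_1 + \tau F_0) = q^k(1+\tau)\|x^0 - x_0^*\|_2^2.
\end{equation*}
The two algebraic assertions are then immediate: squaring shows $q < 1 \iff \gamma_2 < 1 - \gamma_1$, which is exactly the hypothesis $\gamma_1 + \gamma_2 < 1$; and writing $q = \gamma_1 + \gamma_2/q$ with $q \leq 1$ yields $q \geq \gamma_1 + \gamma_2$.
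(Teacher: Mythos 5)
Your proposal is correct and follows essentially the same route as the paper: derive the two-term recursion $F^{k+1}\leq\gamma_1F^k+\gamma_2F^{k-1}$ for $F^k=\mathbb{E}\left[\|x^k-x_0^*\|_2^2\right]$ and then solve it (the paper cites Lemma 9 of Loizou--Richt\'arik for the last step, which is exactly the factorization argument you spell out). The only substantive deviation is in the cross term: the paper splits $(1-\alpha)I+\alpha T^k$ and handles the two pieces separately, whereas you bound $\mathbb{E}[B^k]$ as a single operator of norm at most $\rho:=1-\alpha+\alpha\delta_2^r$ on $\mbox{Row}(A)$; carrying this out gives $3\beta\rho\|e^k\|_2^2+\beta\rho\|e^{k-1}\|_2^2$, i.e.\ exactly the stated $\gamma_1$ but a $\gamma_2'=2\beta^2+\beta(1-\alpha)+\beta\alpha\delta_2^r\leq\gamma_2$, so the concern you raise about ``hitting the stated $\gamma_2$'' is moot --- since $F^{k-1}\geq0$, the recursion with the (larger) stated $\gamma_2$ follows a fortiori and the theorem is proved as written.
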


We here provide one approach to choose the parameters $\alpha$ and $\beta$. Specifically, letting
$$
\tau_1:=4(1-\alpha)+4\alpha\left(1-2\frac{\sigma_{\min}^2(A)}{\|A\|^2_F}\right)^r \ \text{and} \
\tau_2:=2\alpha(1-\alpha)\left(1-\left(1-2\frac{\sigma_{\min}^2(A)}{\|A\|^2_F}\right)^r\right),
$$
if the parameters $\alpha$ and $\beta$ are chosen such that $$
0<\alpha<1 \ \text{and} \ 0\leq \beta<\frac{1}{8}\left(\sqrt{\tau_1^2+16\tau_2}-\tau_1\right),
$$
then it can be verified that  $ \gamma_1+\gamma_2<1$.

Next, we compare the convergence rates  obtained in Theorems \ref{main-ThmrRDR} and \ref{THMfmm}. From the definition of $\gamma_1$ and $\gamma_2$, we know that the convergence rate $q(\beta)$  in Theorem \ref{THMfmm} can be viewed as a function of $\beta$. Since $\beta\geq0$, we have $\tau_2\geq0$. As $\gamma_1+\gamma_2<1$, it implies that
$\gamma_1\gamma_2 + \gamma_2^2 = \gamma_2(\gamma_1+\gamma_2) \leq\gamma_2$. Therefore,
$\gamma_1^2+4\gamma_2 \geq(\gamma_1+2\gamma_2)^2$
and hence,
$$
\begin{aligned}
	q(\beta)&\geq\gamma_1+\gamma_2=4\beta^2+\tau_1\beta-\tau_2+1\geq 1-\tau_2
	\\
	&=q(0)=\alpha^2+(1-\alpha)^2+2\alpha(1-\alpha)\left(1-2\frac{\sigma_{\min}^2(A)}{\|A\|^2_F}\right)^r.
\end{aligned}
$$
Clearly, the lower bound on $q$ is an increasing function of $\beta$, which implies that for any $\beta$ the rate is always inferior  to that of Algorithm \ref{r-RDRK} in Theorem \ref{main-ThmrRDR}.

\subsection{Accelerated linear rate of expected iterates}
To theoretically exhibit the enhancement of the heavy ball momentum, we now show that with a proper selection of
the relaxation parameter $\alpha$ and momentum parameter $\beta$, Algorithm \ref{r-mRDRK} enjoys an accelerated linear convergence rate  in terms of $\| \mathbb{E}  [  x^k-x_0^* ] \|_2^2$.

\begin{theorem}\label{THMfm2}
	Suppose that the linear system $Ax=b$ is consistent,
	$x^1=x^0\in\mathbb{R}^n$ are arbitrary initial vectors, and $x_0^*=A^{\dagger}b+(I-A^\dagger A)x^0$.
	Let $\{x^k\}_{k=0}^{\infty}$ be the iteration sequence in Algorithm \ref{r-mRDRK}.
	Assume that the relaxation parameter
	$$
	0<\alpha<\min\left\{1, \frac{1}{1-\left(1-2\sigma_{\max}^2(A)/\|A\|^2_F\right)^{r}}\right\}$$
	and  the momentum parameter
	$$ \left(1-\sqrt{\alpha\left(1-\left(1-2\frac{\sigma_{\min}^2(A)}{\|A\|^2_F}\right)^r\right)}\right)^2<\beta<1.$$
	Then there exists a constant
	$c>0$ such that
	$$
	\| \mathbb{E}[  x^k-x_0^*] \|_2^2\leq \beta^k c.
	$$
\end{theorem}

\begin{remark}
	Note that the convergence factor in Theorem  \ref{THMfm2} is  equal to the value of the momentum parameter $\beta$.
	Theorem \ref{main-ThmrRDRv2} shows that Algorithm \ref{r-RDRK} (without momentum)
	converges with iteration complexity
	$$
	O\left( \log(\varepsilon^{-1})\left(\alpha\left(1-\left(1-2\sigma_{\min}^2(A)/\|A\|^2_F\right)^r\right)\right)^{-1}\right).
	$$
	In contrast, based on Theorem \ref{THMfm2} we have,
	for $\beta=\left(1-\sqrt{0.99\alpha\left(1-\left(1-2\sigma_{\min}^2(A)/\|A\|^2_F\right)^r\right)}\right)^2$, the iteration complexity of Algorithm \ref{r-mRDRK} is
	$$
	O\left(\log (\varepsilon^{-1})\sqrt{\left(0.99\alpha\left(1-\left(1-2\sigma_{\min}^2(A)/\|A\|^2_F\right)^r\right)\right)^{-1}} \right),
	$$
	which is a quadratic improvement on the above conclusion.
\end{remark}

\section{Numerical experiments}\label{sec:ne}

In this section, we study the computational behavior of the two proposed algorithms,
RrDR and mRrDR. In particular, we focus mainly on the evaluation of the performance
of mRrDR. We compare  mRrDR with some of the state-of-the-art methods, namely, RK \cite{Str09}, RGS \cite{Lev10,Gri12}, and RP-ADMM \cite{Sun20}. Moreover, we also compare the implementation of mRrDR  with the  built-in function \texttt{pinv} and \texttt{lsqminnorm} in {\sc Matlab}.

All the methods are implemented in  {\sc Matlab} R2019b for Windows $10$ on a desktop PC with the  Intel(R) Core(TM) i7-10710U CPU @ 1.10GHz  and 16 GB memory.

\subsection{Numerical setup}
We mainly use the following three types of data for our test.

{\bf Synthetic data.}
For synthetic data, given different values of $\|A\|^2_F/\sigma^2_{\min}(A)$, we generate a group of matrices $A$. We then generate the exact solution $x^*$ by $ x^*=(A^\top w)/\|A^\top w\|_2 $ with a random $ w \in \mathbb{R}^m $ to ensure that it lies in the $ \operatorname{Row}(A) $ and $b=Ax^*$ to ensure the consistency of the system. The synthetic data are designed to investigate the influence of the rate coefficient on the convergence process.

{\bf Real-world data.} The real-world data are available via the SuiteSparse Matrix Collection \cite{Kol19} and LIBSVM \cite{chang2011libsvm}.
In our experiments, we only use the matrices $A$ of the datasets. If $m<n$, then we use $A^\top$ as the coefficient matrix. Similarly, to ensure the consistency of the linear system, we first generate the solution by $ x^*=(A^\top w)/\|A^\top w\|_2 $ and then set $b=Ax^*$.

{\bf Average consensus.}
Suppose $G=(V, E)$ is an undirected connected network with the vertex set $V=\{v_1, v_2, \cdots, v_n\}$ and the edge set $E \ (|E|=m)$.
In the average consensus (AC) problem, each vertex $v_i \in V$ owns a private value $c_{i} \in \mathbb{R}$,
and the goal of the problem is to compute the average of the private values of each vertex of the network,
$\bar{c}:=\frac{1}{n} \sum_{i} c_{i}$, where only communication between neighbours is allowed.
The problem is fundamental in distributed computing and multiagent systems \cite{boyd2006randomized,loizou2021revisiting}, and has many applications such as PageRank, coordination of autonomous agents, and rumor spreading in social networks. Recently, under an appropriate setting, the famous randomized pairwise gossip algorithm \cite{boyd2006randomized} for solving the AC problem has been proved to be equivalent to the RK method. One may refer to \cite{loizou2021revisiting} for more details.

For the synthetic data and real data,  the initial vector is chosen as $x^0=0$ (or $x^1=x^0=0$ for mRrDR). For the AC problem, the initial vector is chosen as $x^0=c$ (or $x^1=x^0=c$ for mRrDR).
The computations are terminated once the relative solution error (RSE), defined as
$\operatorname{RSE}= \|x^k-x_0^*\|^2_2 / \|x^0-x_0^*\|^2_2$,
is less than a specific error tolerance or the number of iteration exceeds a certain limit.  All the results below are averaged over $10$ trials.

\subsection{Optimal selection of the parameters}
As observed in the convergence theorems presented in sections \ref{section-RrDR} and  \ref{section-mRrDR}, the parameters of RrDR and mRrDR have an influence on the convergence rate.
In this subsection, we aim to find relatively favorable choices via numerical tests. All computations are terminated once $\operatorname{RSE}<10^{-12}$.
\subsubsection{Choice of $\alpha$ and $\beta$}
In this subsection, we demonstrate the computational behavior of mRrDR with respect to different parameters $\alpha$ and $\beta$.
(Note that for $\beta=0$ it is equivalent to RrDR.) 
We measure the performance of the method with respect to the number of iterations. From Figures \ref{figue620}, \ref{figue620-1}, \ref{figue620-22}, and \ref{figue628-1}, it is obvious that the introduction of momentum term leads to an improvement in the performance of RrDR.
More specifically, from the figures we observe the following.

\begin{itemize}
	\item[1.] The momentum technique can improve the convergence behavior of the method. 
	For any fixed value of $\alpha$, with appropriate momentum parameters $0 <\beta \leq 0.9$, mRrDR typically converges faster than its non-momentum variant RrDR.
	\item[2.] For the RrDR method ($ \beta = 0 $), a larger value of $\alpha$ can help achieve a faster convergence. This is consistent with the observation in the literature that the overrelaxation parameter is more advisable for better performance \cite{Str09,Nec19}.
	\item[3.] For different values of $\|A\|^2_F/\sigma^2_{\min}(A)$ (well- or ill-conditioned linear systems),  and different choices of $r$, $(\alpha,\beta)=(0.5,0.4)$ is typically a good option for a sufficient fast  convergence of mRrDR.
\end{itemize}

\begin{figure}[hptb]
	\centering
	\begin{tabular}{cc}
		\includegraphics[width=0.31\linewidth]{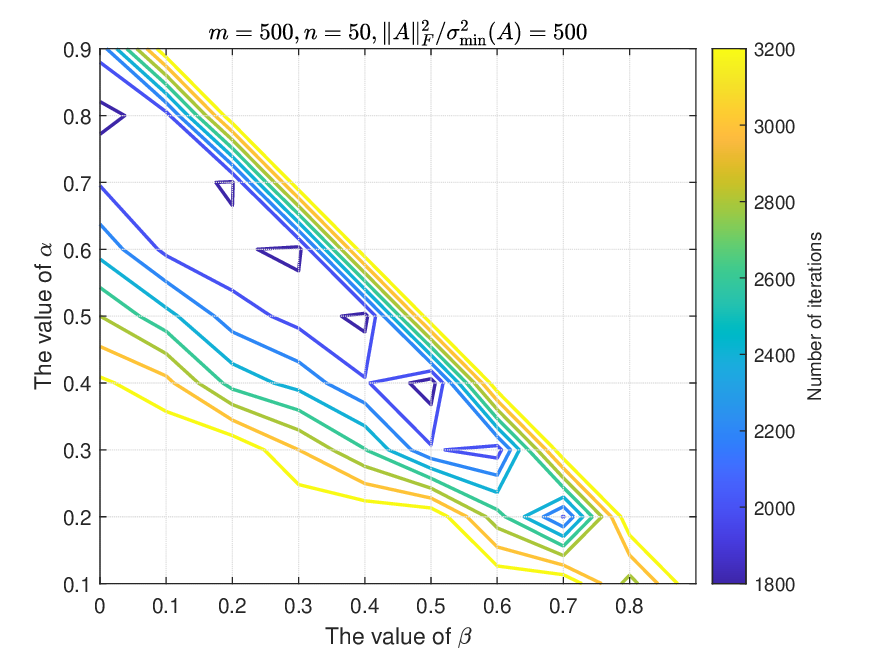}
		\includegraphics[width=0.31\linewidth]{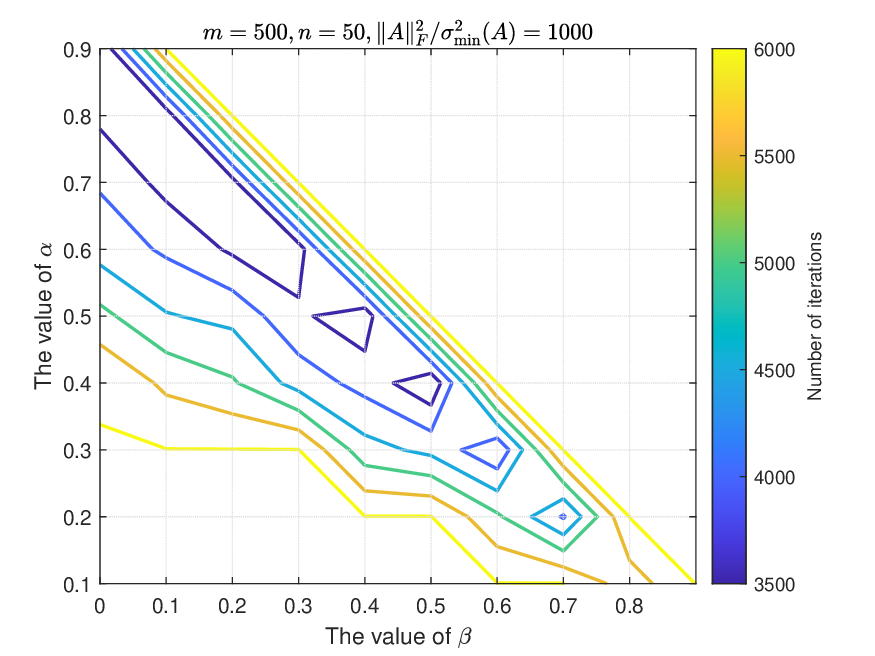}
		\includegraphics[width=0.31\linewidth]{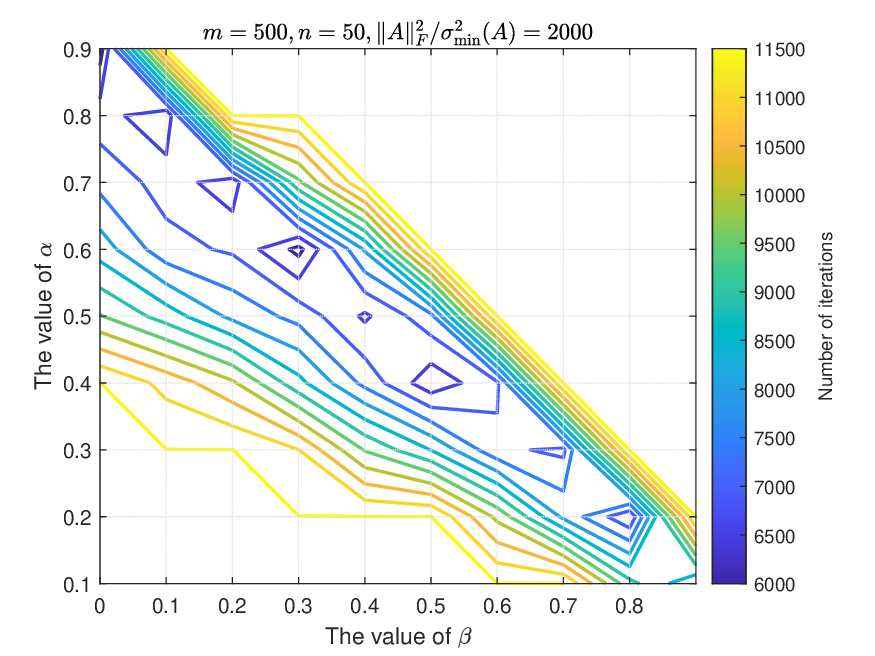}\\
		\includegraphics[width=0.31\linewidth]{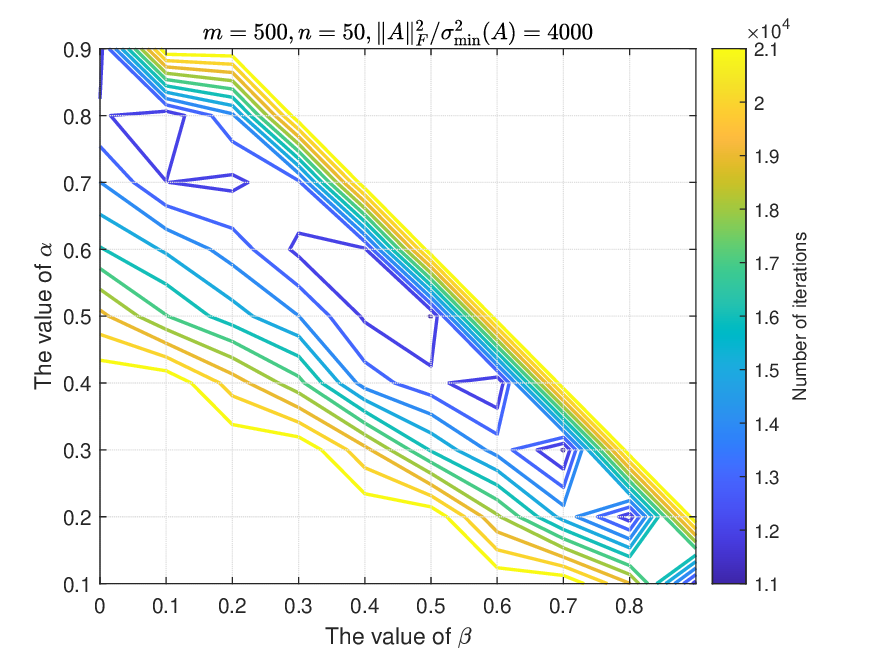}
		\includegraphics[width=0.31\linewidth]{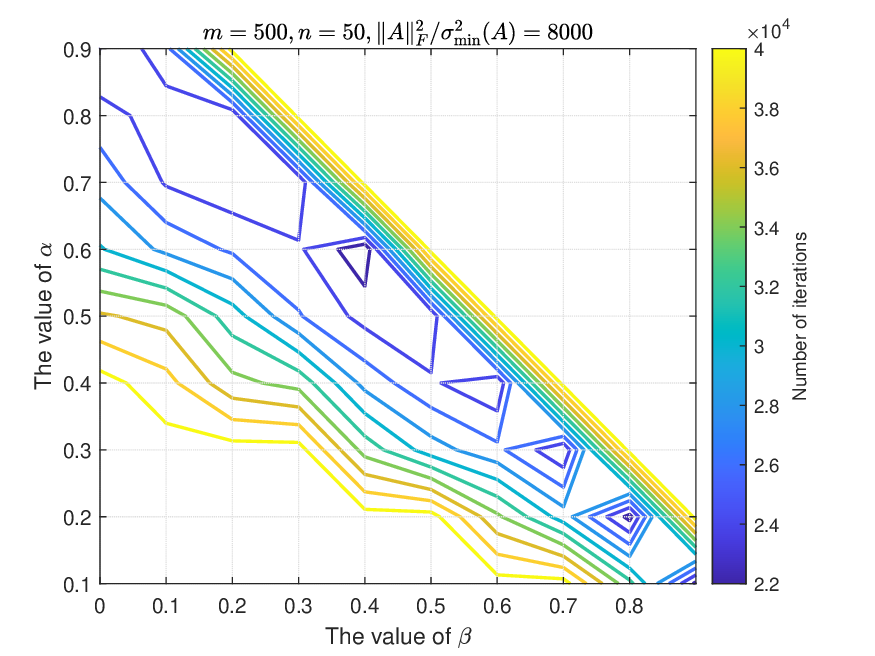}
		\includegraphics[width=0.31\linewidth]{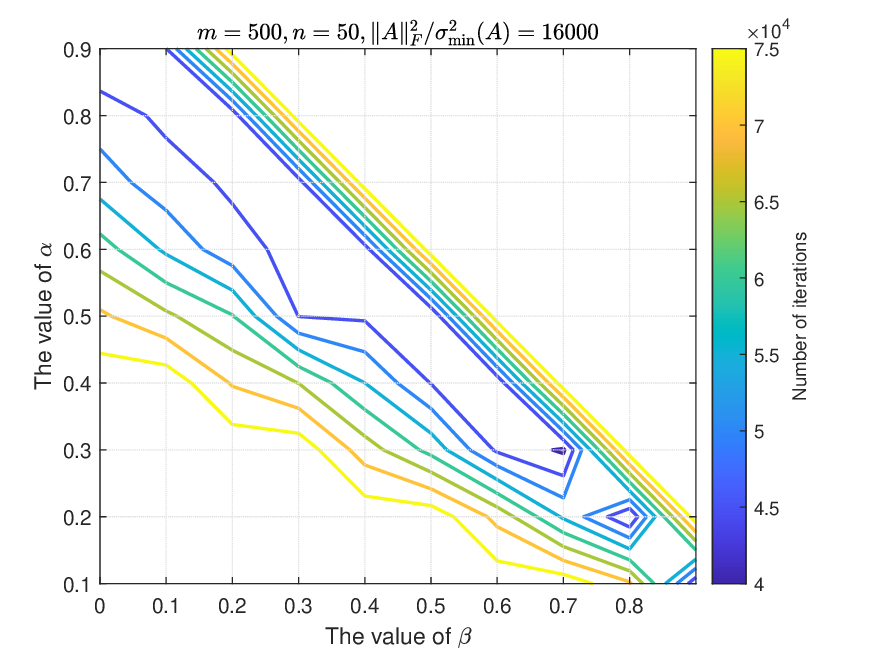}
	\end{tabular}
	\caption{Performance of mRrDR with different parameters $\alpha$ and $\beta$ for consistent linear systems with Gaussian matrix $A$, where $r=2$. The title of each plot indicates the dimensions of the matrix $A$ and the value of $\|A\|^2_F/\sigma_{\min}^2(A)$. }
	\label{figue620}
\end{figure}

\begin{figure}[hptb]
	\centering
	\begin{tabular}{cc}
		\includegraphics[width=0.31\linewidth]{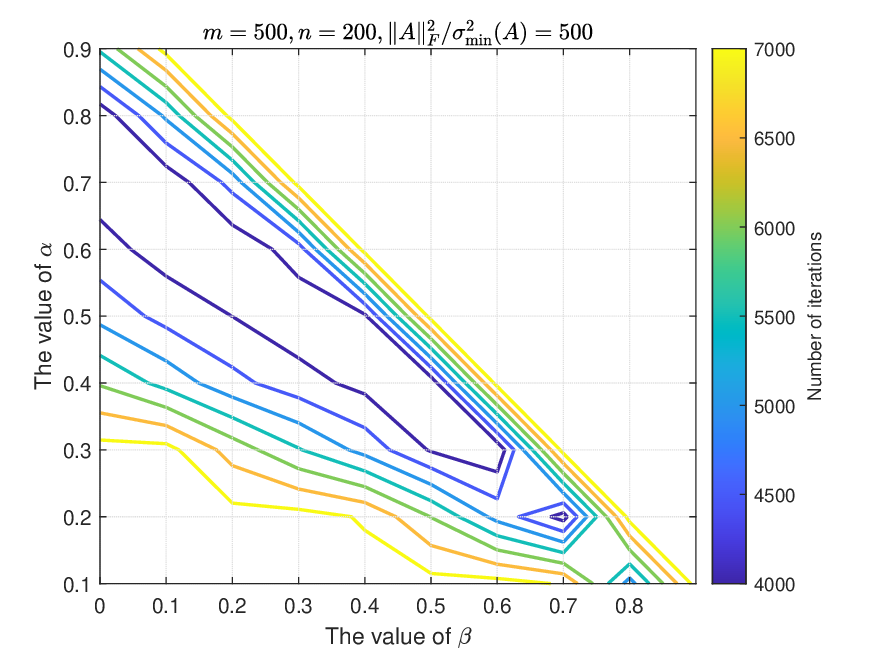}
		\includegraphics[width=0.31\linewidth]{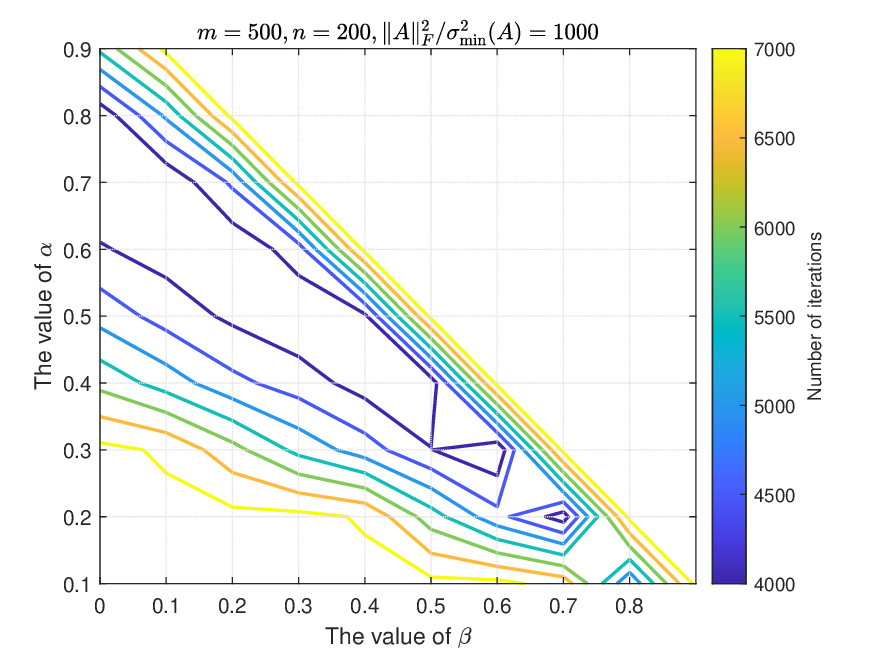}
		\includegraphics[width=0.31\linewidth]{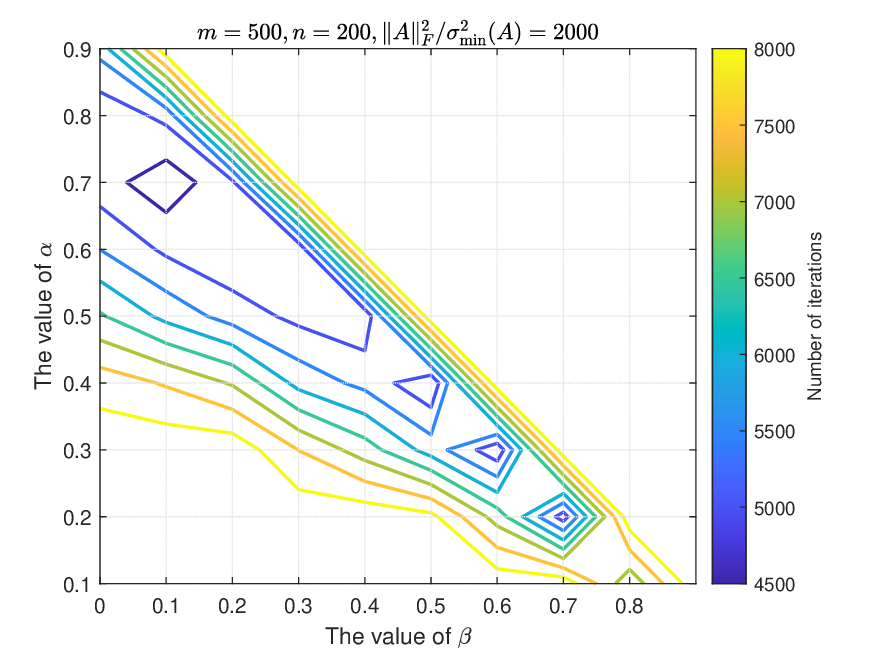}\\
		\includegraphics[width=0.31\linewidth]{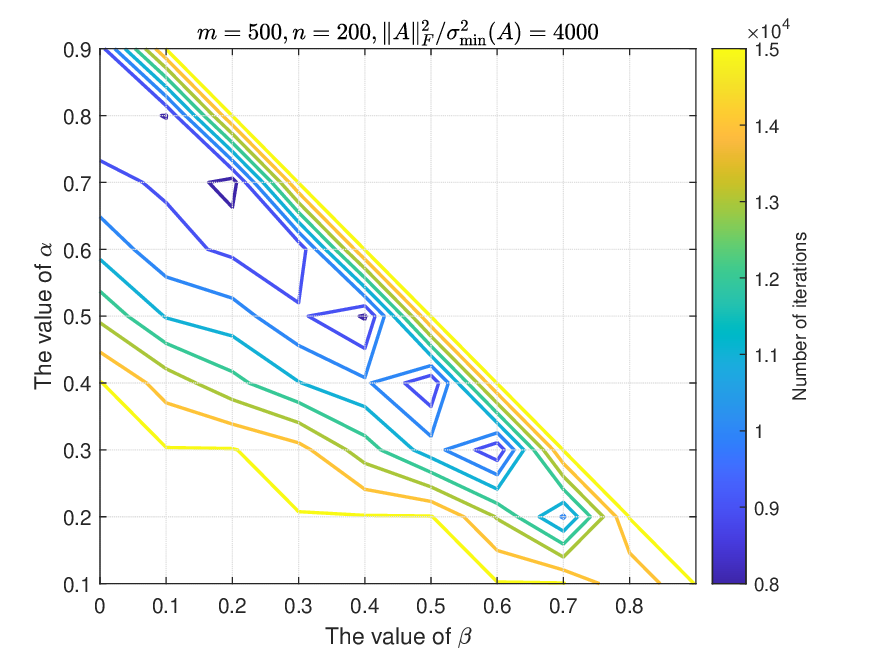}
		\includegraphics[width=0.31\linewidth]{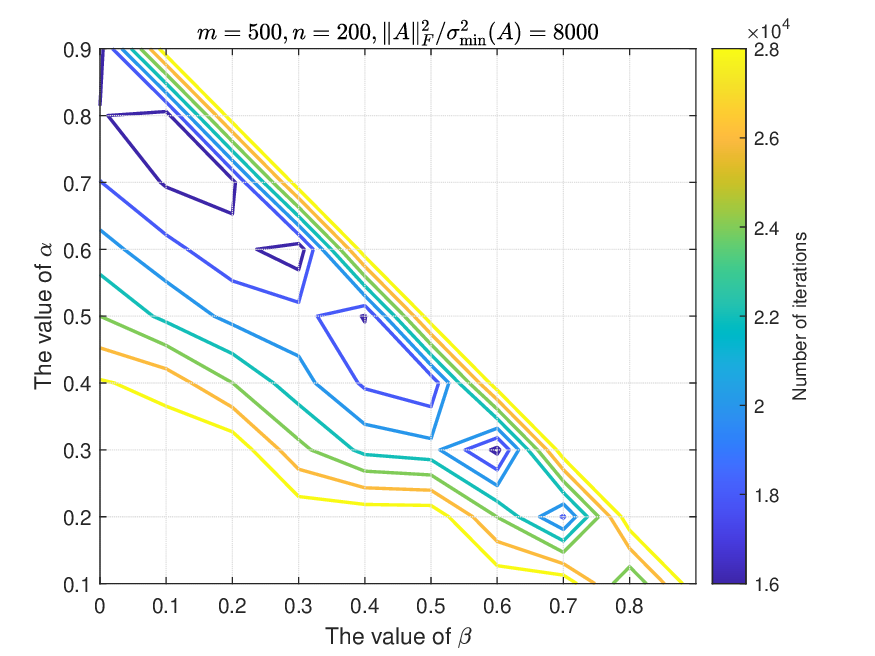}
		\includegraphics[width=0.31\linewidth]{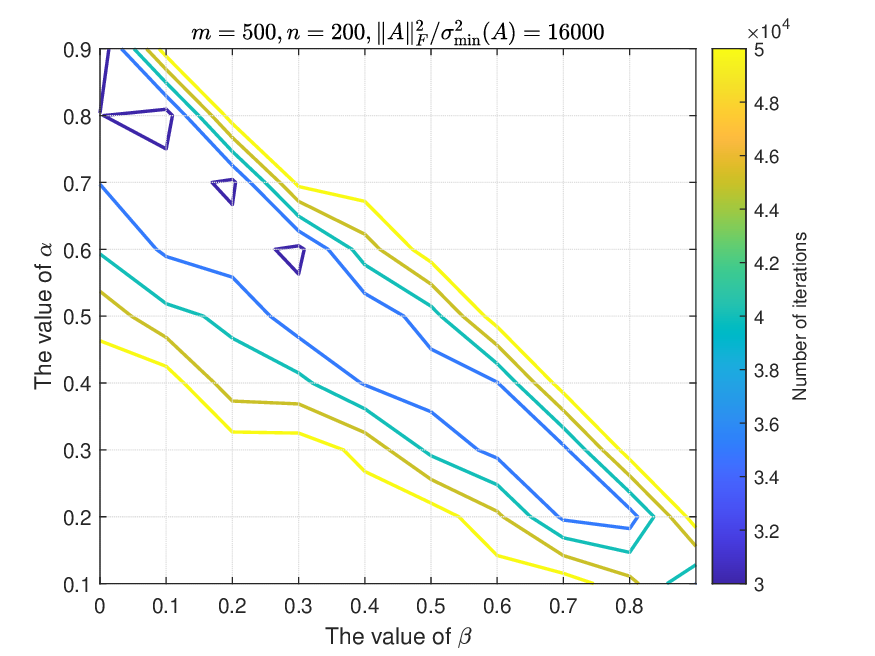}
	\end{tabular}
	\caption{Performance of mRrDR with different parameters $\alpha$ and $\beta$ for consistent linear systems with Gaussian matrix $A$, where $r=3$. The title of each plot indicates the dimensions of the matrix $A$ and the value of $\|A\|^2_F/\sigma_{\min}^2(A)$. }
	\label{figue620-1}
\end{figure}

\begin{figure}[hptb]
	\centering
	\begin{tabular}{cc}
		\includegraphics[width=0.31\linewidth]{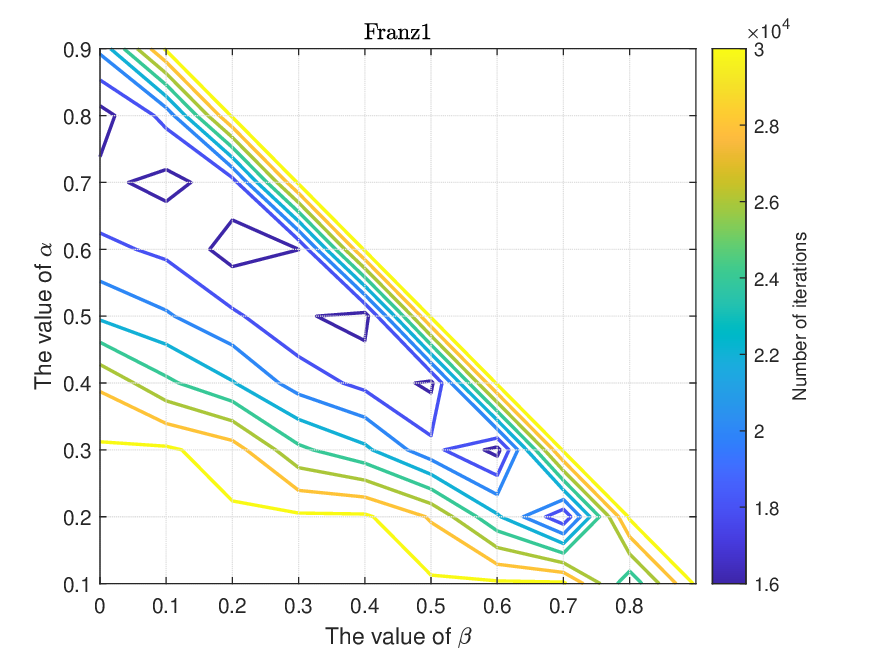}
		\includegraphics[width=0.31\linewidth]{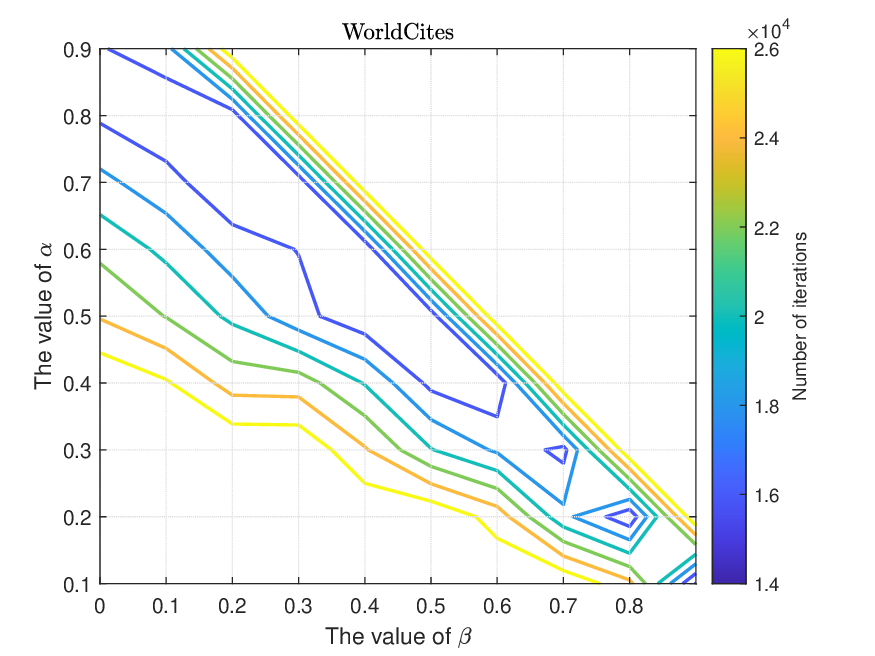}
		\includegraphics[width=0.31\linewidth]{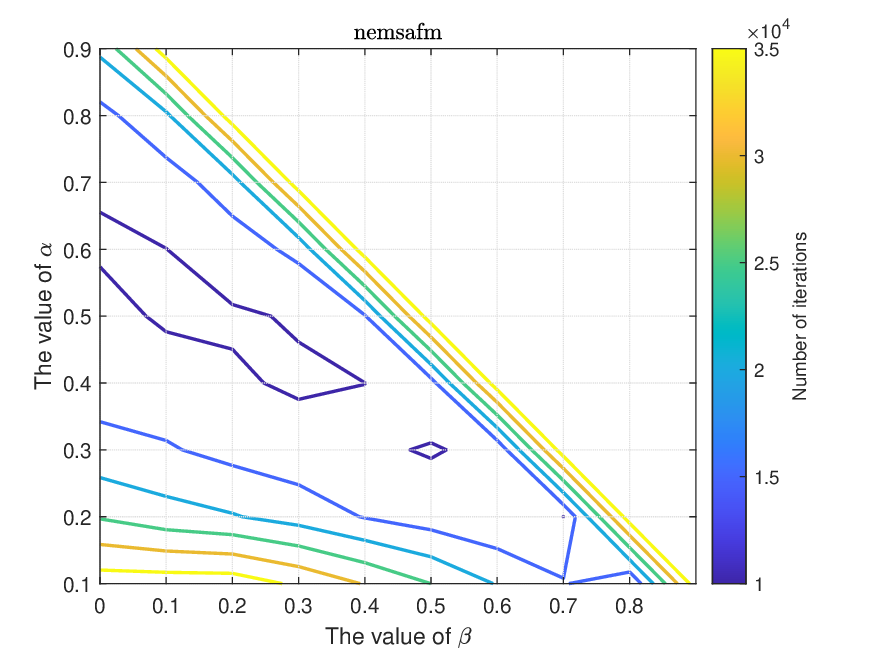}
	\end{tabular}
	\caption{Performance of mRrDR with different parameters $\alpha$ and $\beta$ on real data from  SuiteSparse Matrix Collection \cite{Kol19}. {\tt Franz1}: $(m,n)=(2240,768)$, {\tt WorldCites}: $(m,n)=(315,100)$, {\tt nemsafm}: $(m,n)=(2348,334)$, where $r=3$.}
	\label{figue620-22}
\end{figure}

\begin{figure}[hptb]
	\centering
	\begin{tabular}{cc}
		\includegraphics[width=0.31\linewidth]{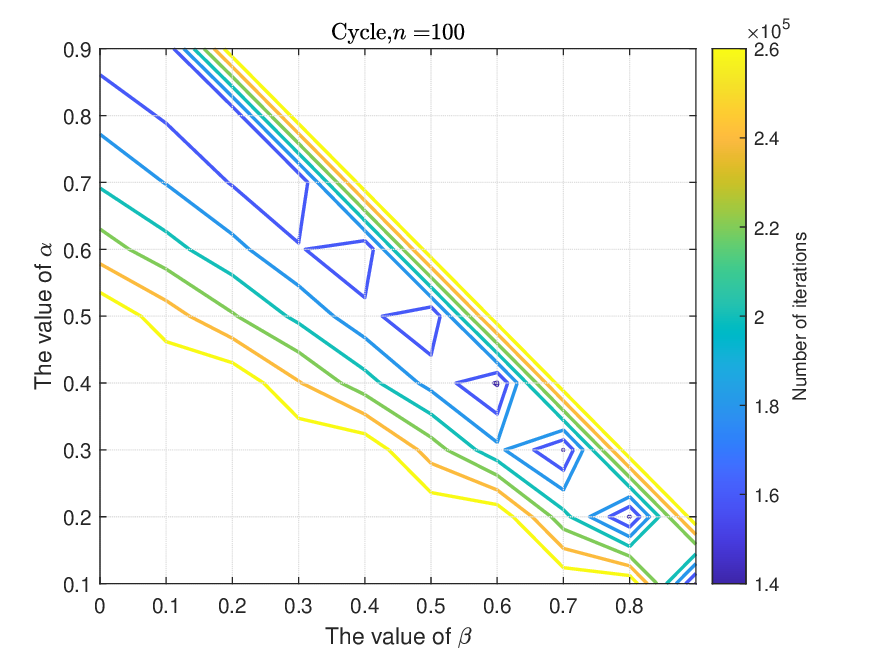}
		\includegraphics[width=0.31\linewidth]{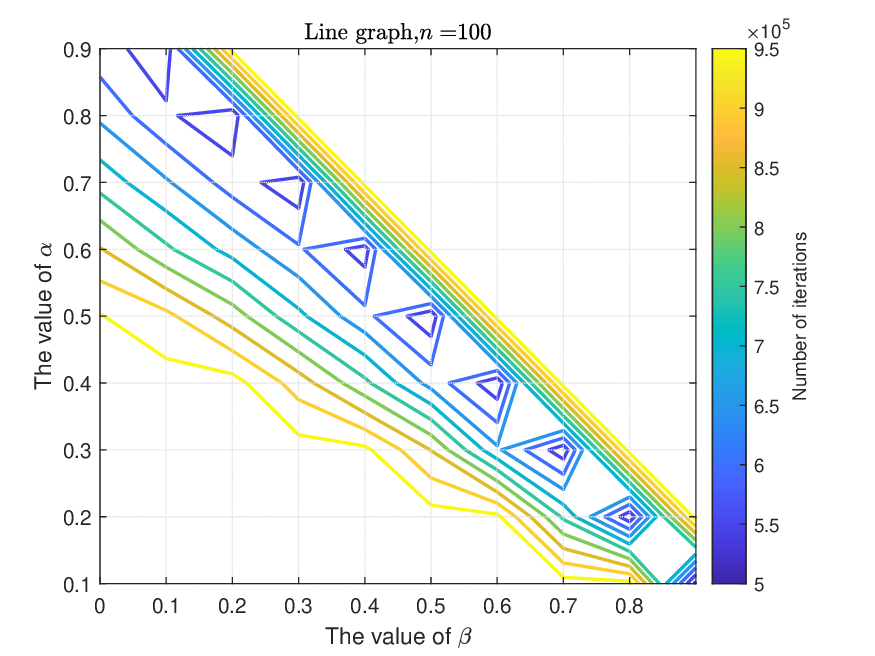}
		\includegraphics[width=0.31\linewidth]{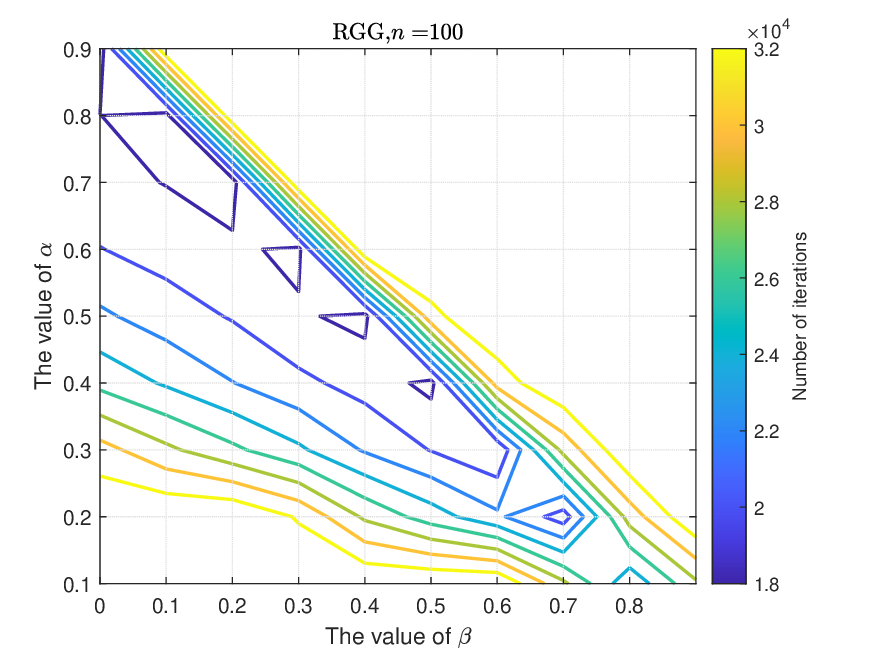}
	\end{tabular}
	\caption{Performance of mRrDR with different parameters $\alpha$ and $\beta$ on the AC problem, where $ r=2 $. }
	\label{figue628-1}
\end{figure}

\subsubsection{Choice of $r$}
\label{subsection:cr}
In this subsection, we investigate the computational performance of mRrDR with
respect to the parameter $r$.
We plot the performance of the method in terms of the number of row actions.
Figures \ref{figue6251} and \ref{figue6261} illustrate our experimental results. Note again that $\beta=0$ represents the RrDR method and $r=1$ represents the RK method.

It is can be seen that a larger $r$ may lead to slower convergence and $r\in[1, 10]$ is a good option. Besides, with an appropriate  $r$, the mRrDR method may converge faster than the mRK method $(r=1)$. For example, 
the first row in Figure \ref{figue6251} shows that mRrDR with $r=4$ performs better than the mRK method.

%
%

\begin{figure}[hptb]
	\centering
	\begin{tabular}{cc}
		\includegraphics[width=0.31\linewidth]{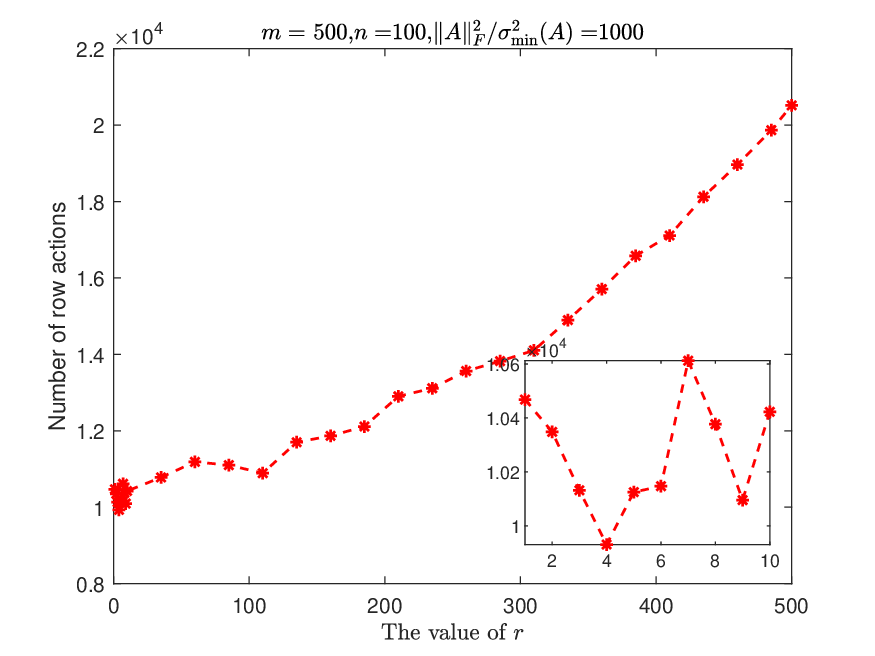}
		\includegraphics[width=0.31\linewidth]{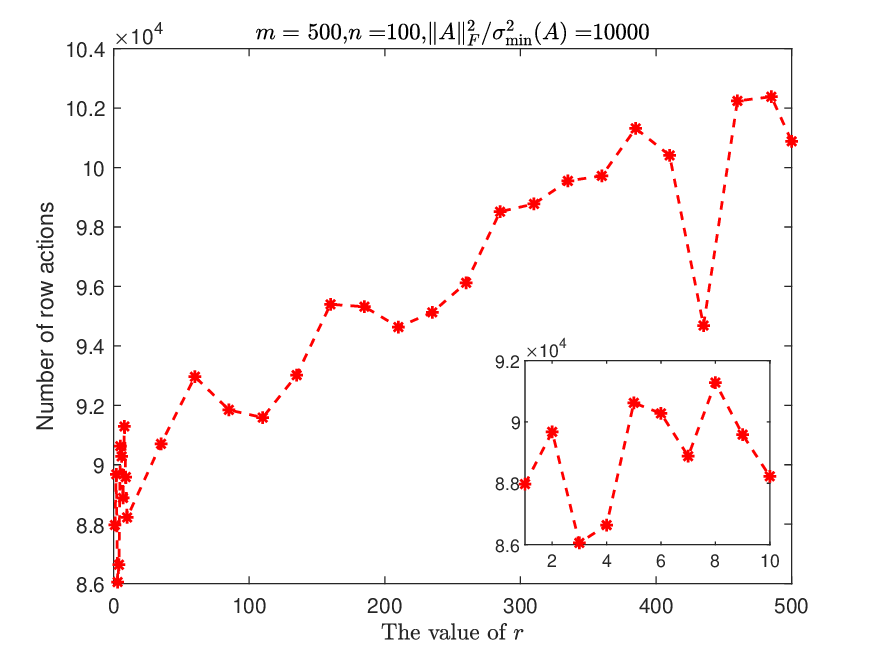}
		\includegraphics[width=0.31\linewidth]{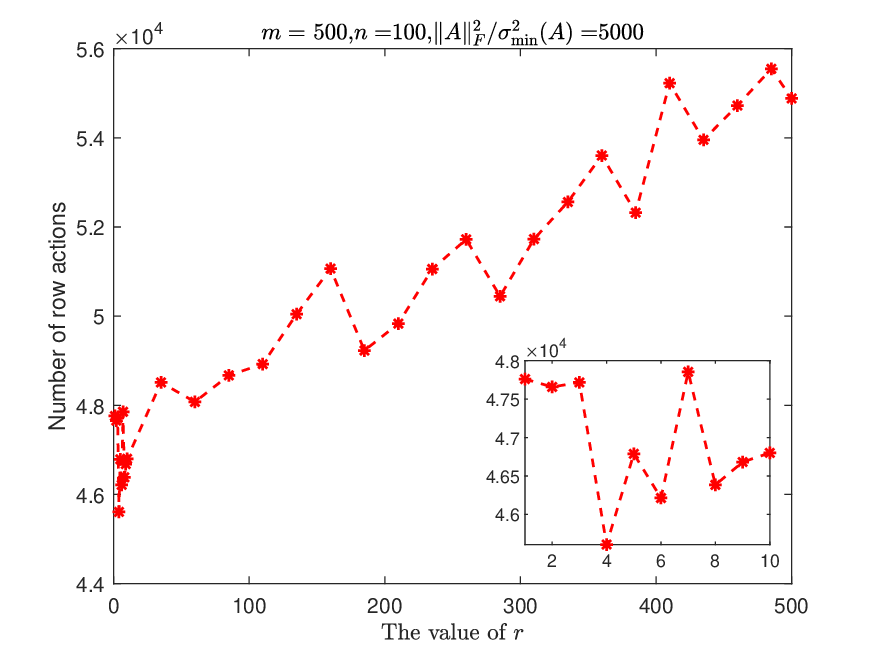}\\
		\includegraphics[width=0.31\linewidth]{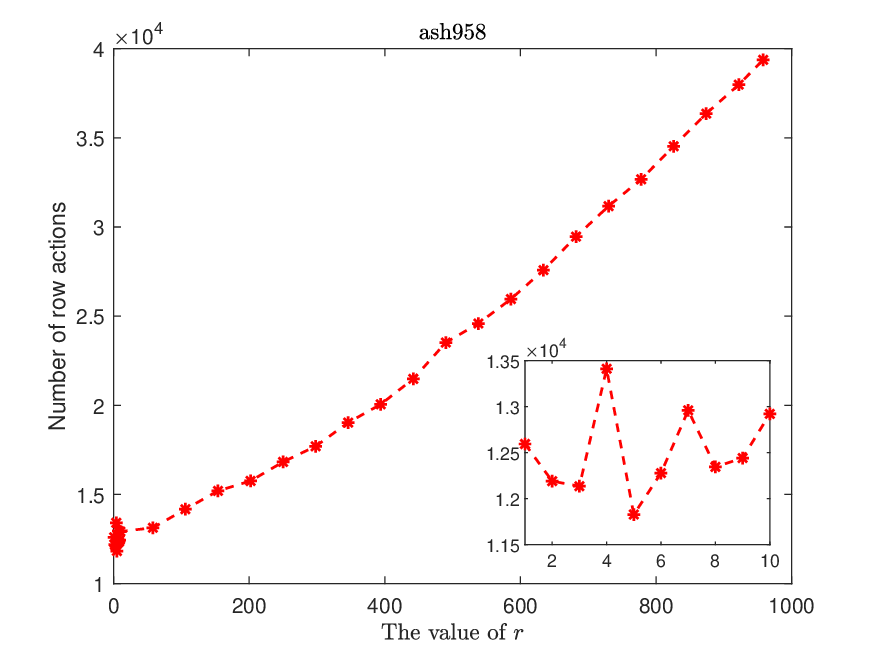}
		\includegraphics[width=0.31\linewidth]{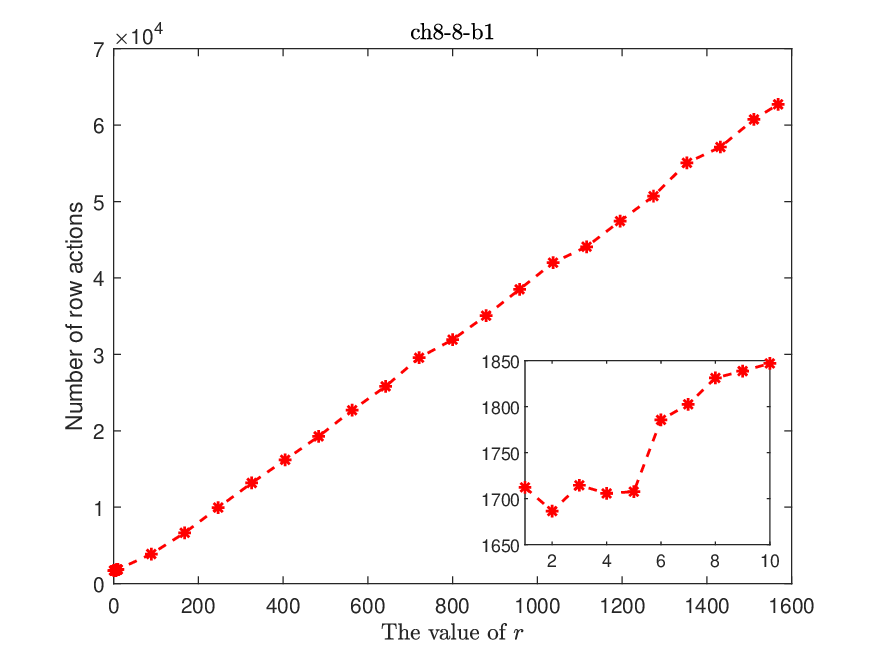}
		\includegraphics[width=0.31\linewidth]{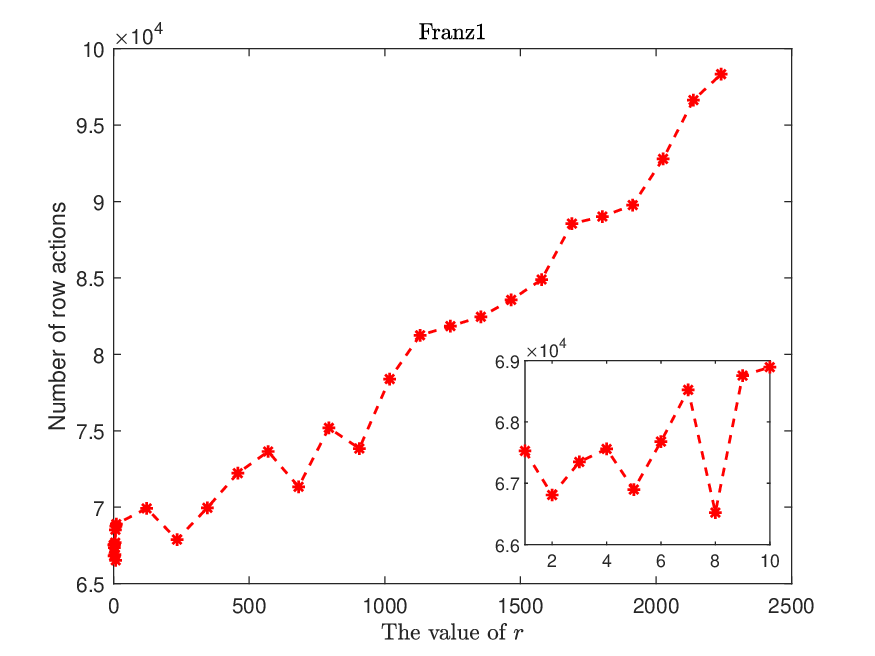}\\
		\includegraphics[width=0.31\linewidth]{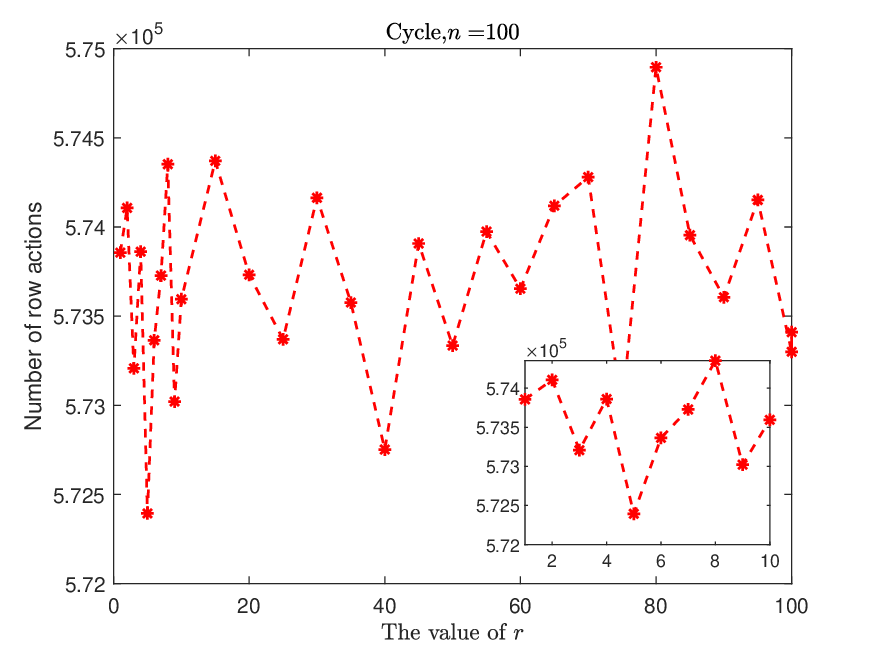}
		\includegraphics[width=0.31\linewidth]{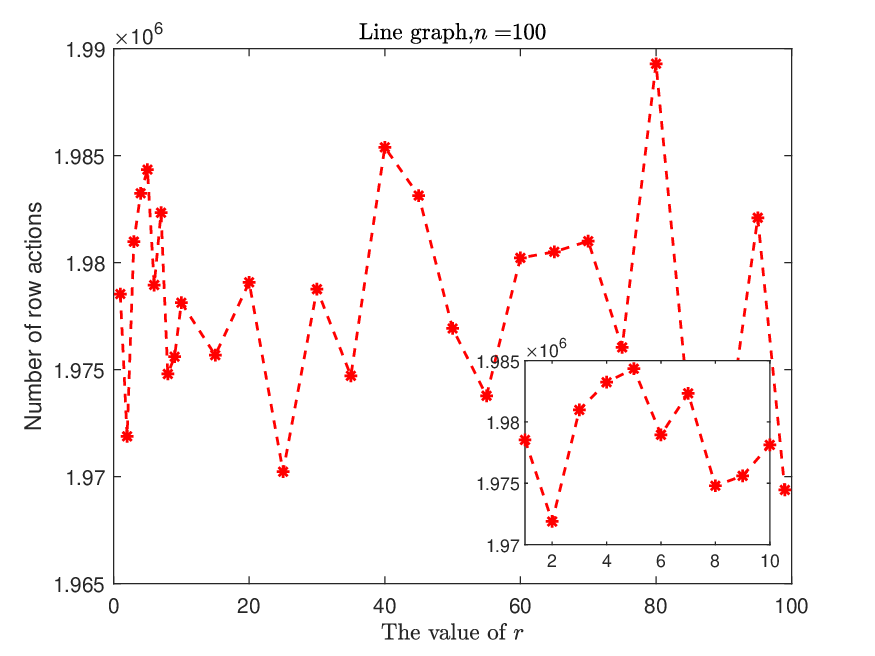}
		\includegraphics[width=0.31\linewidth]{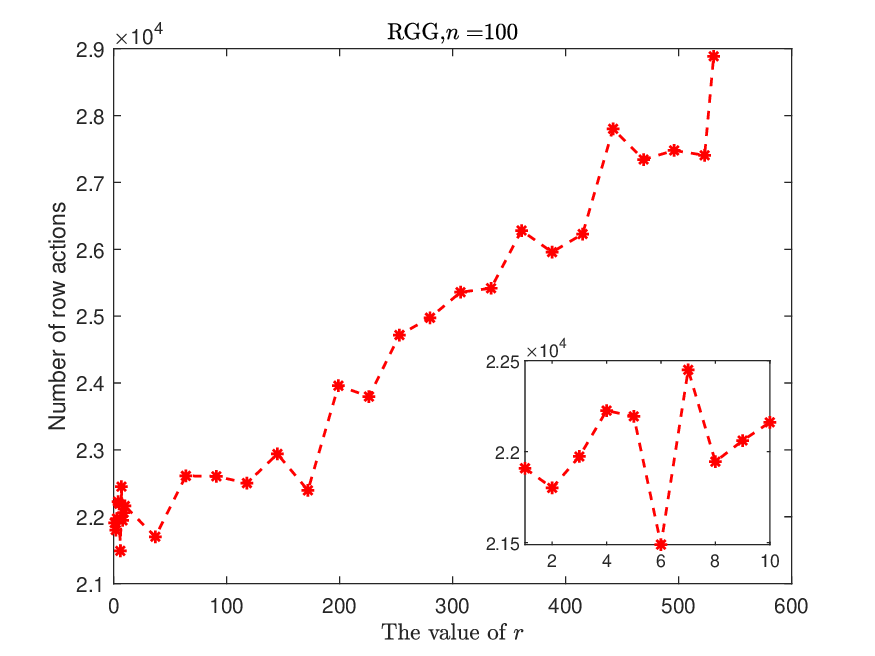}
	\end{tabular}
	\caption{Performance of mRrDR with different values of $r$. $r=1$ refers to the mRK method. The parameters $\alpha=0.5,\beta=0$.
		The title of each plot indicates the test data. The number of row actions is employed to illustrate the evolutions for the different settings of mRrDR. }
	\label{figue6251}
\end{figure}

\begin{figure}[hptb]
	\centering
	\begin{tabular}{cc}
		\includegraphics[width=0.31\linewidth]{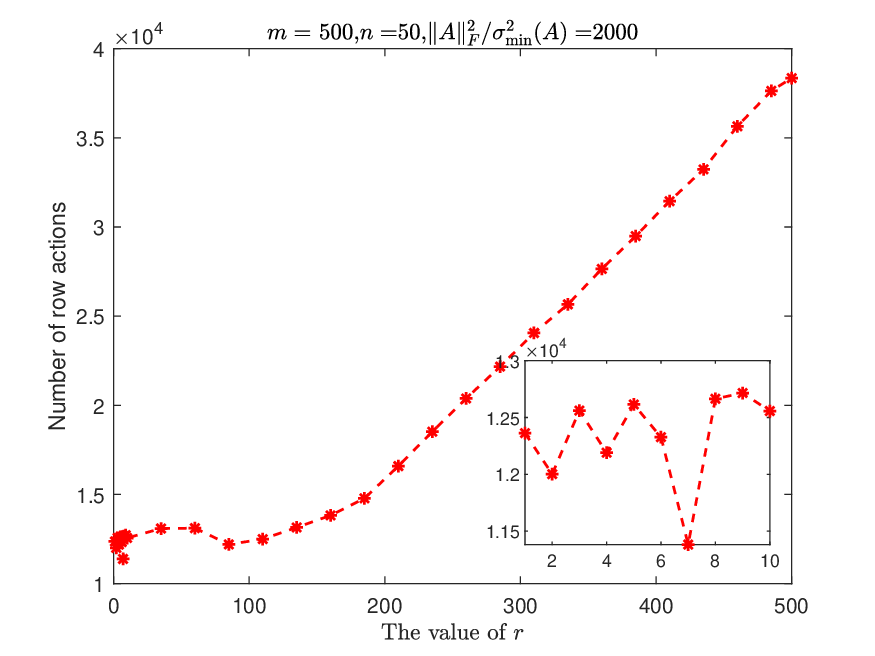}
		\includegraphics[width=0.31\linewidth]{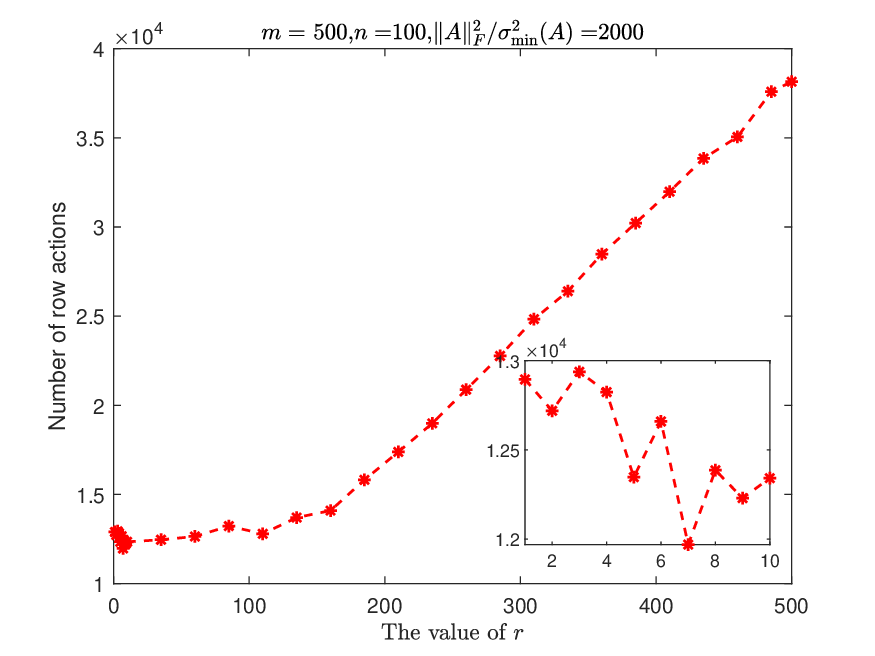}
		\includegraphics[width=0.31\linewidth]{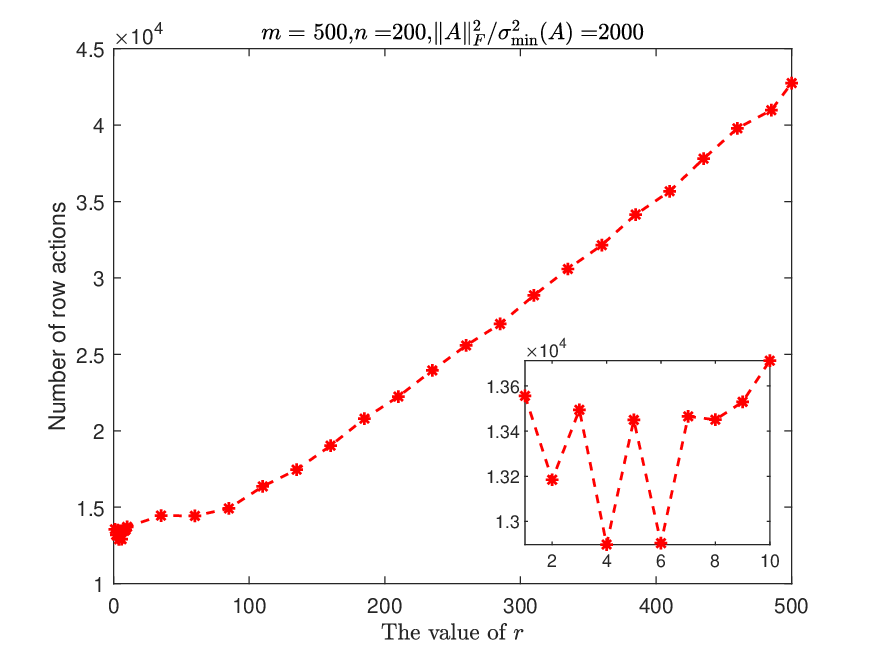}
	\end{tabular}
	\caption{Performance of mRrDR with different values of $r$.  $r=1$ refers to the mRK method. The parameters $\alpha=0.5, \beta=0.4$.  The title of each plot indicates the dimensions of the matrix $A$ and the value of $\|A\|^2_F/\sigma_{\min}^2(A)$. The number of row actions is employed to illustrate the evolutions for the different settings of mRrDR. }
	\label{figue6261}
\end{figure}


\subsection{Comparison to the cyclic DR method}

In this subsection, we compare the mRrDR method to the cyclic DR method \eqref{cyc-DR} to demonstrate the effectiveness of randomization.
During the test, mRrDR is implemented with $r=2$ and $(\alpha,\beta)=(0.5,0)$ or $(\alpha,\beta)=(0.5,0.4)$. For the  cyclic DR method \eqref{cyc-DR}, we set $r=2$ and $\alpha=0.5$.
From Figure \ref{figue7}, we can observe the significant improvement in efficiency that randomization and momentum bring.

\begin{figure}[hptb]
	\centering
	\begin{tabular}{cc}
		\includegraphics[width=0.31\linewidth]{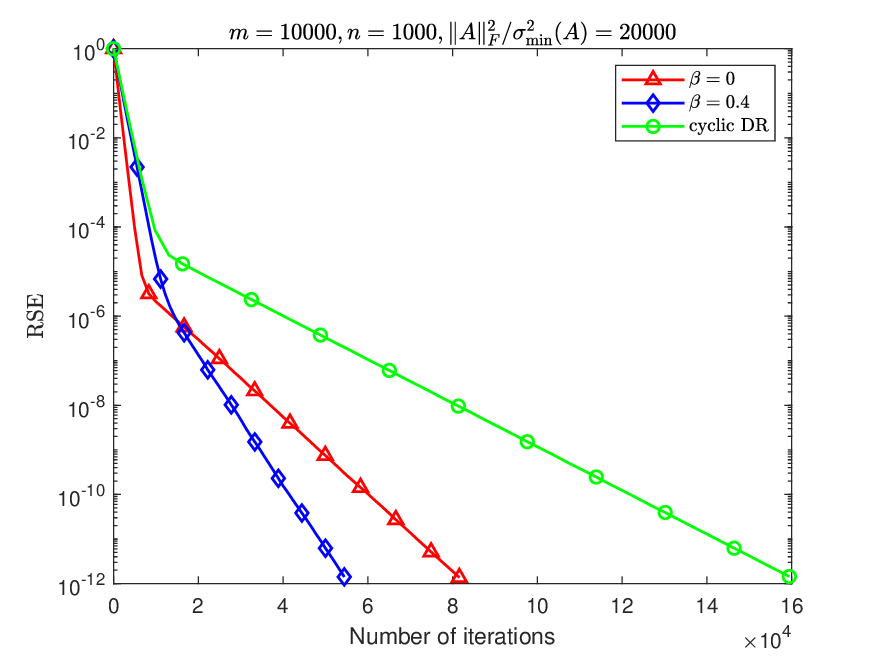}
		\includegraphics[width=0.31\linewidth]{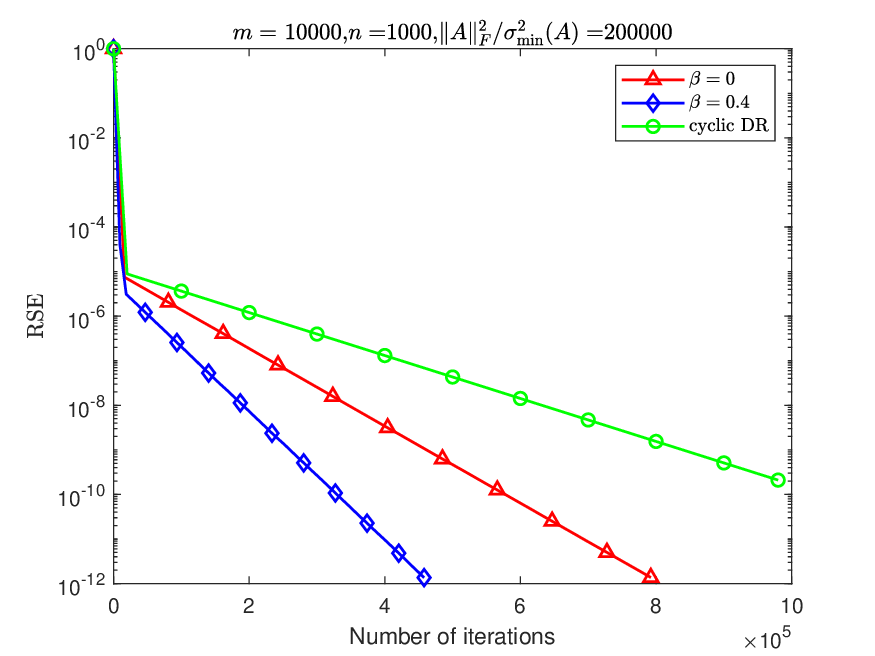}
		\includegraphics[width=0.31\linewidth]{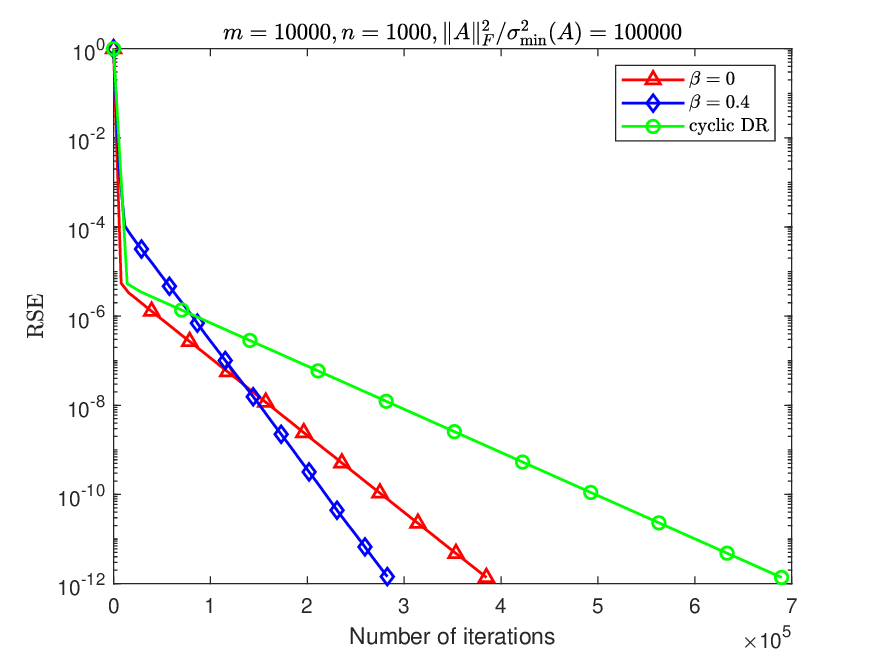}\\
		\includegraphics[width=0.31\linewidth]{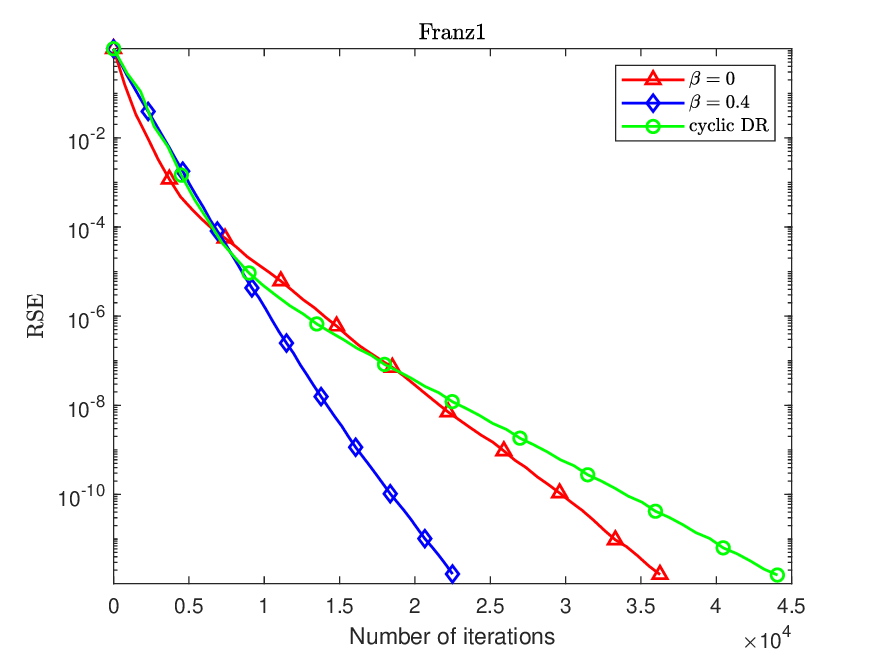}
		\includegraphics[width=0.31\linewidth]{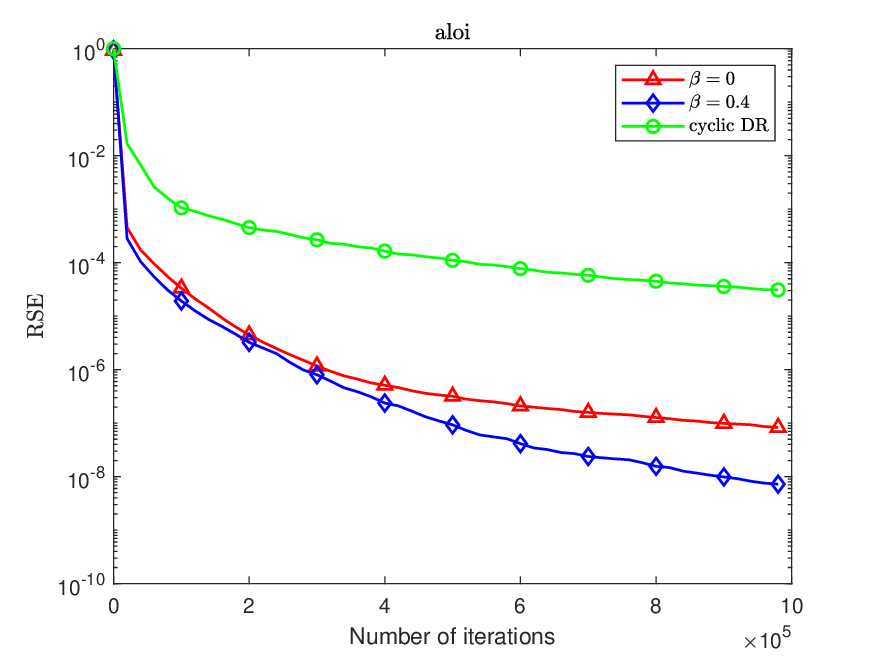}
		\includegraphics[width=0.31\linewidth]{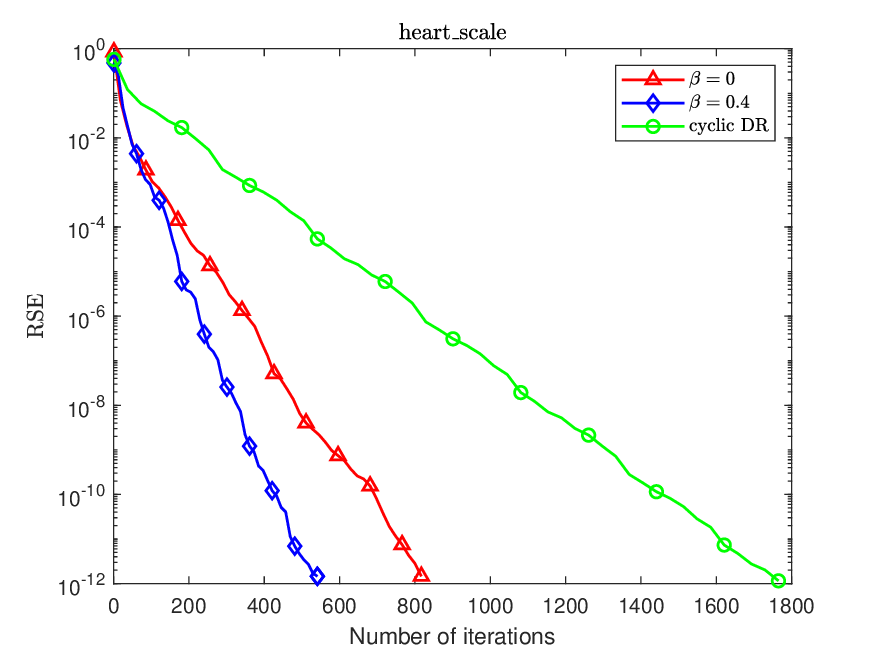}\\
		\includegraphics[width=0.31\linewidth]{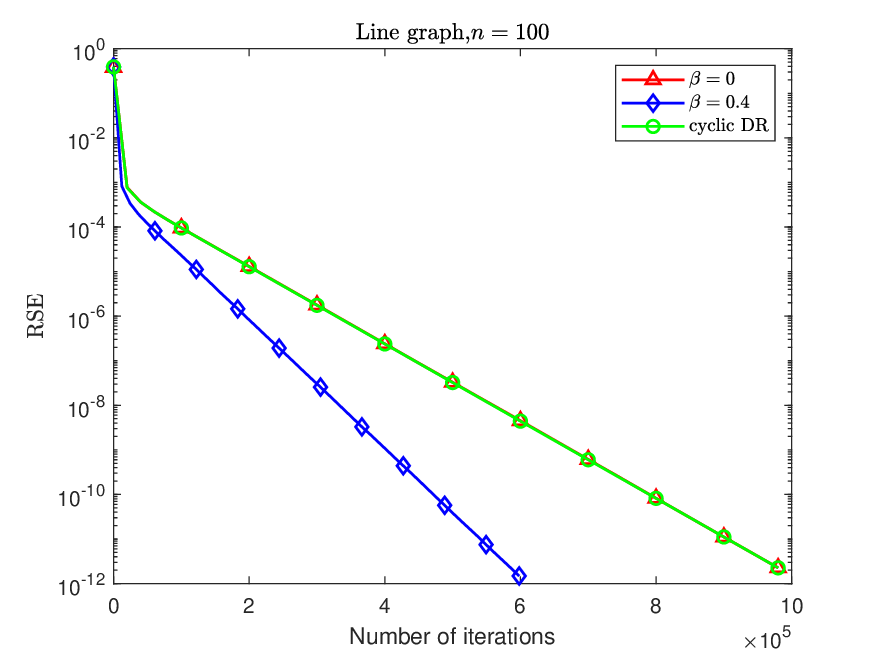}
		\includegraphics[width=0.31\linewidth]{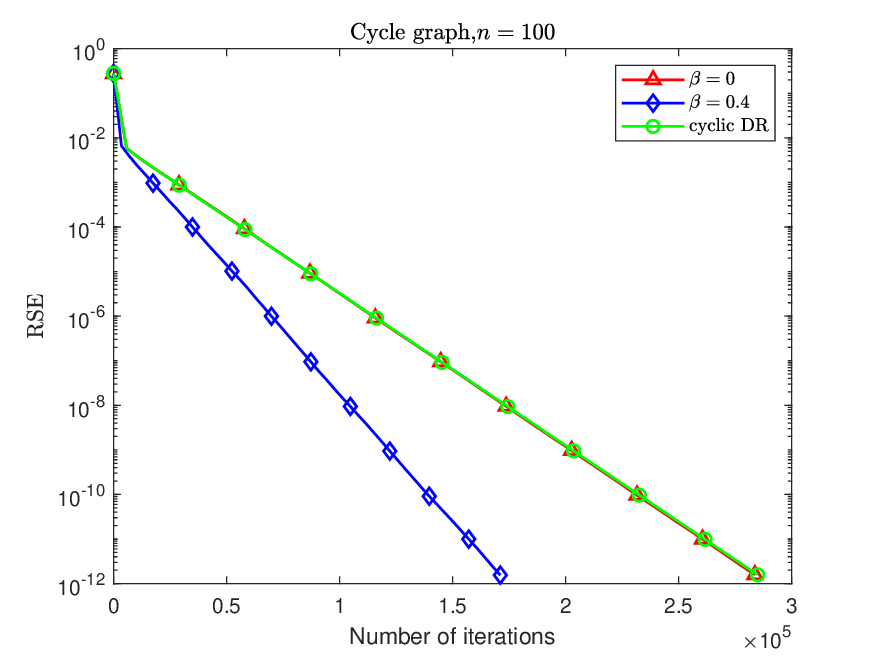}
		\includegraphics[width=0.31\linewidth]{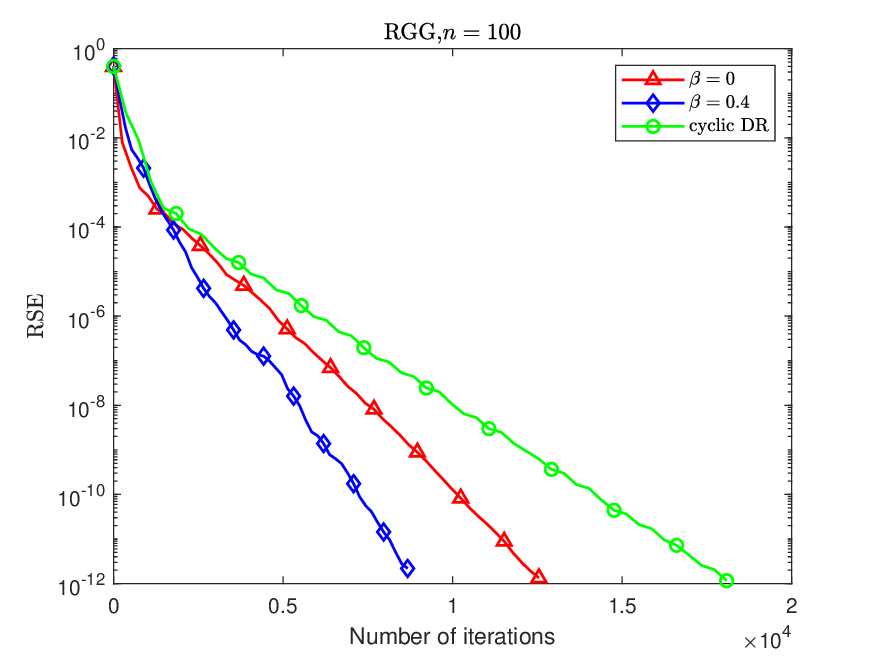}
	\end{tabular}
	\caption{Comparison of mRrDR $(r=2,\alpha=0.5)$ and the cyclic DR $(\alpha=0.5)$, where {\tt Franz1} from SuiteSparse Matrix Collection \cite{Kol19}, {\tt aloi} and {\tt heart-scale} from LIBSVM \cite{chang2011libsvm}. We stop the algorithms if $\operatorname{RSE}<10^{-12}$ or if the number of iteration exceeds a certain limit.}
	\label{figue7}
\end{figure}

\subsection{Comparison to the other methods}

We now compare mRrDR to other related methods for solving linear systems, including RK \cite{Str09},
randomized Gauss-Seidel (RGS) method \cite{Lev10,Gri12}, and randomly permuted alternating direction method of multipliers (RP-ADMM) \cite{Sun20}.
The RGS, also known as the randomized coordinate descent (RCD) method, updates with the following iterative strategy:
$$	x^{k+1}:=x^k-\frac{A_{j_{k}}^\top(Ax^{k}-b)}{\|A_{j_k}\|^2_2}e_{j_k},$$
where $j_k\in\{1,2,\ldots,n\}$ is selected with probability
$
\mbox{Pr}(j_k=j)=\frac{\|A_{j}\|_2^2}{\|A\|_F^2}
$, and $A_j, j=1,\ldots,n$ represent the columns of $A$ and $e_j$ is a column vector with the $j$-th entry being one and all other entries being zero.

The RP-ADMM tackles the following constrained problem
$$
\min\limits_{x\in\mathbb{R}^n} \ 0 \ \ \mbox{subject to} \ Ax=b,
$$
whose augmented Lagrangian function is defined by
$$
\mathcal{L}(x;\mu):=-\mu^\top(Ax-b)+\frac{\rho}{2}\|Ax-b\|^2_2,
$$
where $\rho>0$ is a given penalty parameter and $\mu$ denotes the Lagrangian multiplier.
Then the RP-ADMM method proceeds as follows: Given an approximation $(x^k,\mu^{k})$, it picks a permutation $\sigma$ of $\{1,\ldots,n\}$ uniformly at random, and constructs $(x^{k+1},\mu^{k+1})$ from $(x^k,\mu^{k})$ via
$$(x^{k+1})_{\sigma(i)}=\arg\min\limits_{x_{\sigma(i)}}\mathcal{L}((x^{k+1})_{\sigma(1)},\ldots,(x^{k+1})_{\sigma(i-1)},
x_{\sigma(i)},(x^{k})_{\sigma(i+1)},\ldots,(x^{k})_{\sigma(n)};\mu^{k})
$$
and
$
\mu^{k+1}=\mu^k-(Ax^{k+1}-b).
$
We note that the alternating direction method of multipliers (ADMM) is equivalent to the DR method \cite{Eck92,han2022survey} in the sense that the sequences generated by both algorithms coincide with a careful choice of starting point \cite[Remark 3.14]{Bau15Pro}.

Since there are seldom differences in the numerical performance for $ r\in[1,10]$, here we only show the cases where $r=2$.
For the RP-ADMM method, during our test, we set $\rho=1$ and the initial vector $\mu^0=0$.
Figures \ref{figue10} and \ref{figue11} summarize the results of the experiment.
It can be seen that  mRrDR  is more efficient than the other considered methods.

\begin{figure}[hptb]
	\centering
	\begin{tabular}{cc}
		\includegraphics[width=0.31\linewidth]{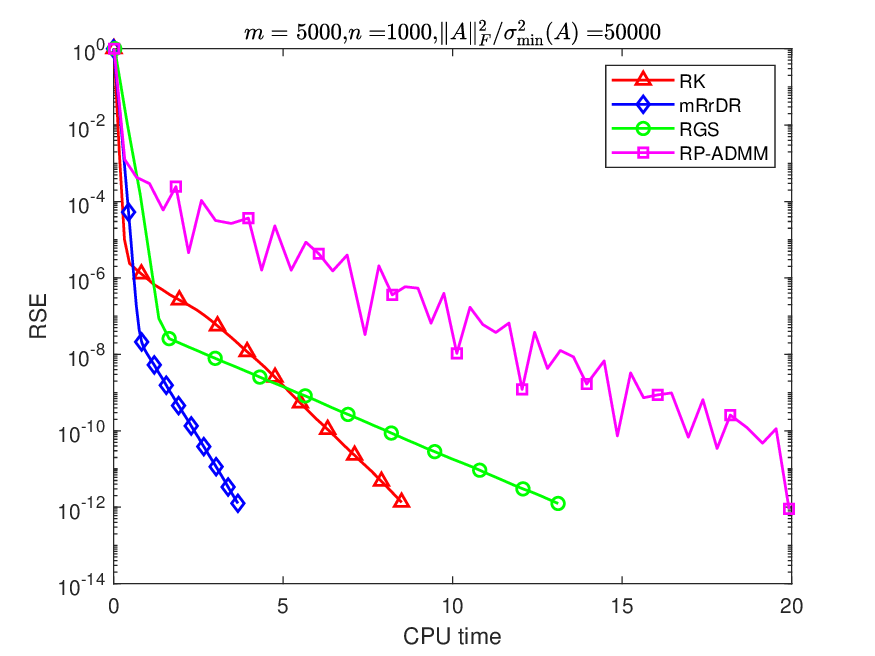}
		\includegraphics[width=0.31\linewidth]{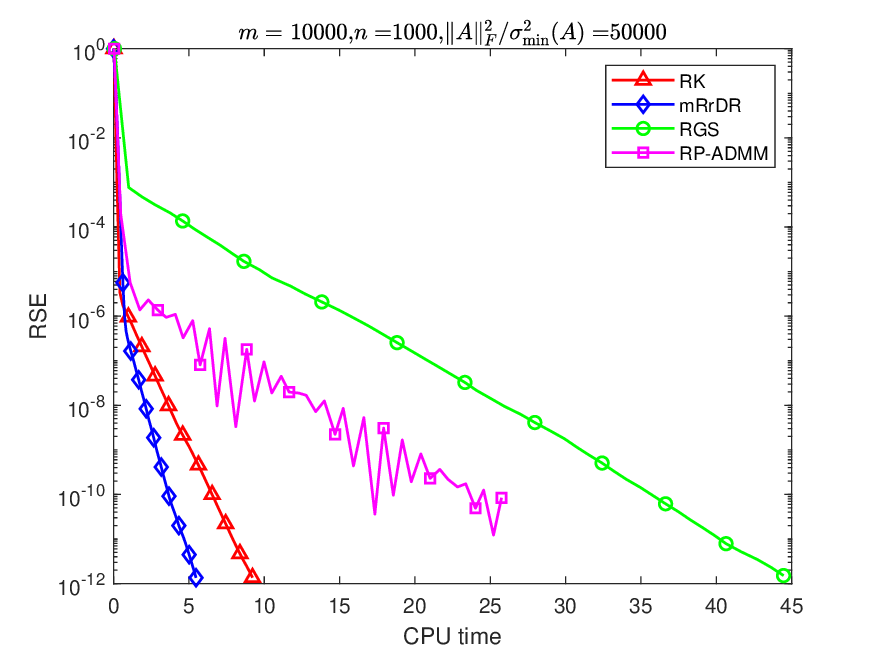}
		\includegraphics[width=0.31\linewidth]{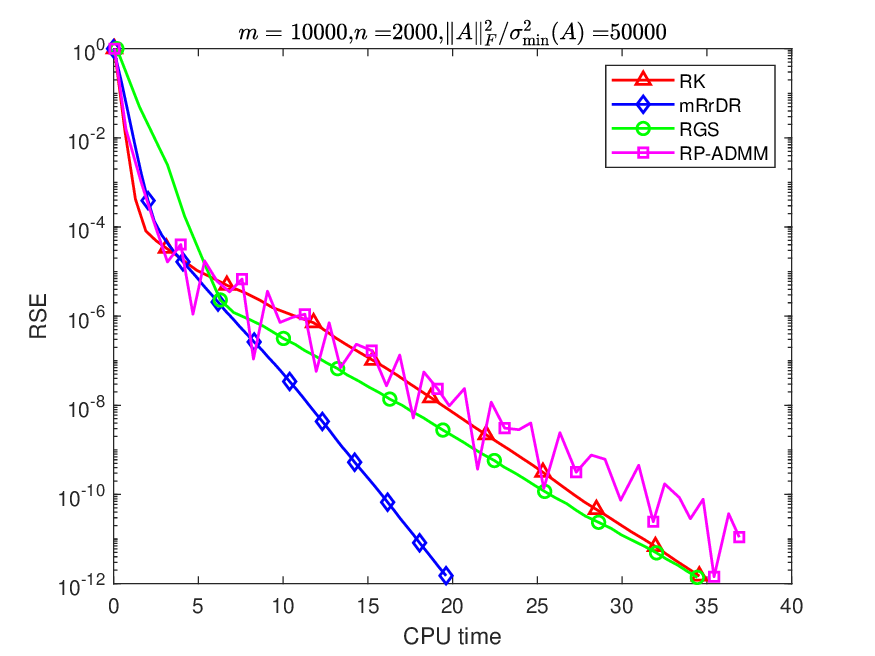}
	\end{tabular}
	\caption{Performance of RK, mRrDR, RGS, and RP-ADMM for synthetic data. We stop the algorithms if $\operatorname{RSE}<10^{-12}$ or if the number of iterations exceeds a certain limit.}
	\label{figue10}
\end{figure}

\begin{figure}[hptb]
	\centering
	\begin{tabular}{cc}
		\includegraphics[width=0.31\linewidth]{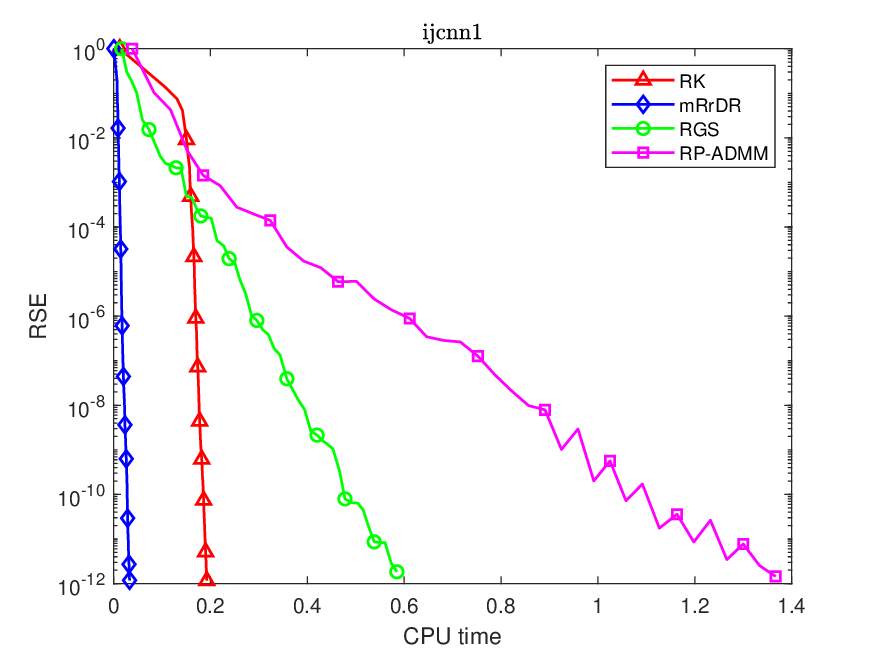}
		\includegraphics[width=0.31\linewidth]{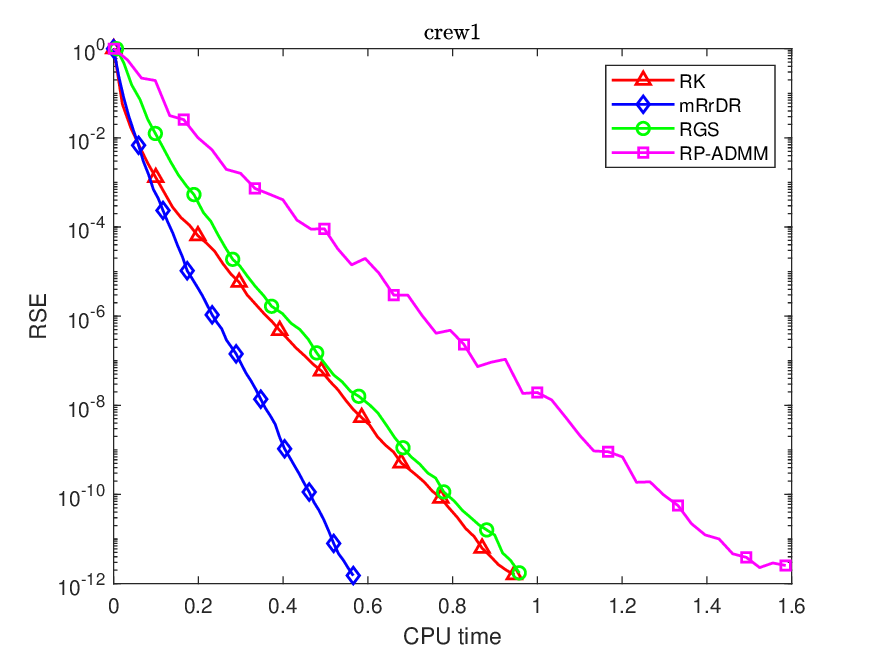}
		\includegraphics[width=0.31\linewidth]{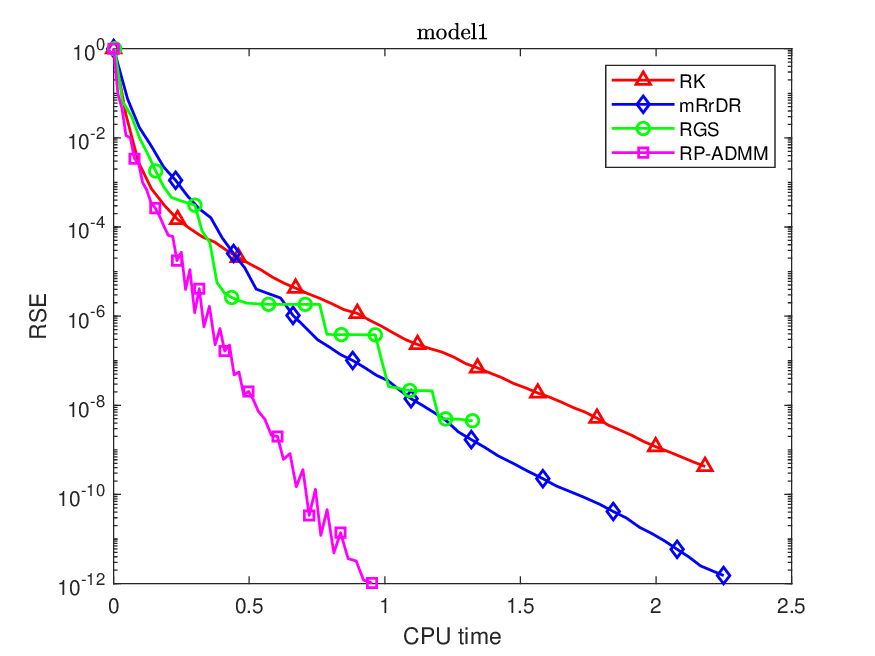}\\
		\includegraphics[width=0.31\linewidth]{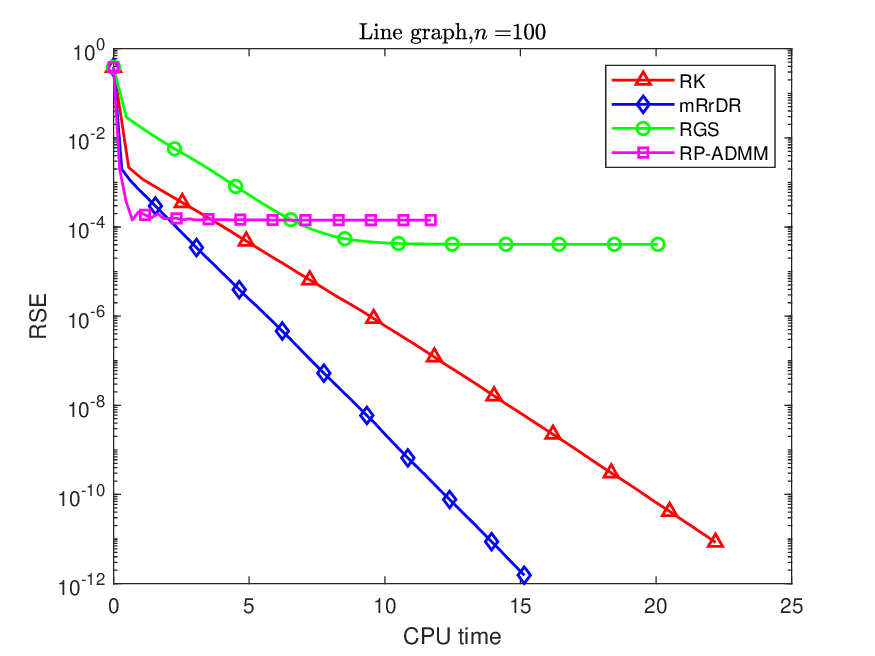}
		\includegraphics[width=0.31\linewidth]{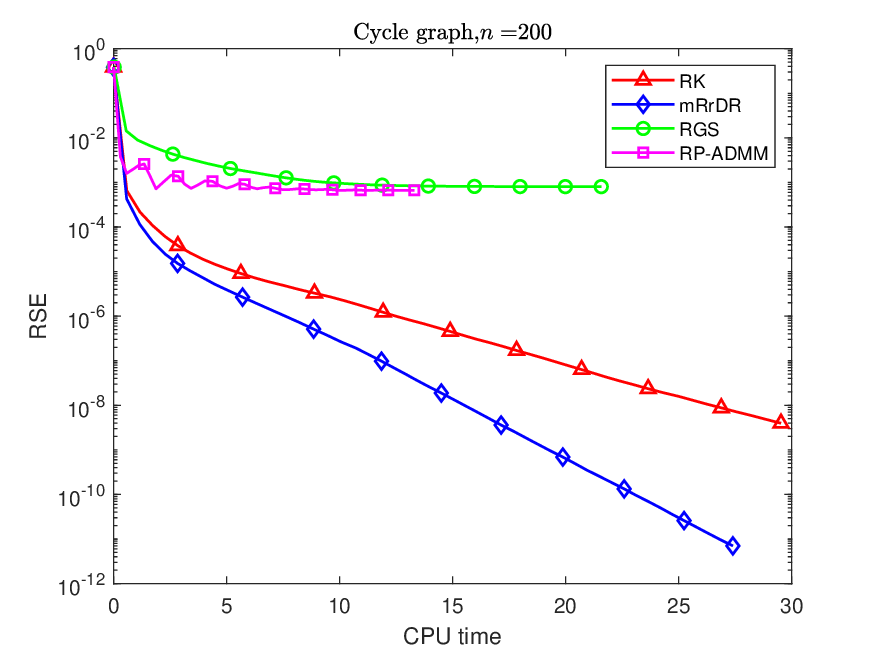}
		\includegraphics[width=0.31\linewidth]{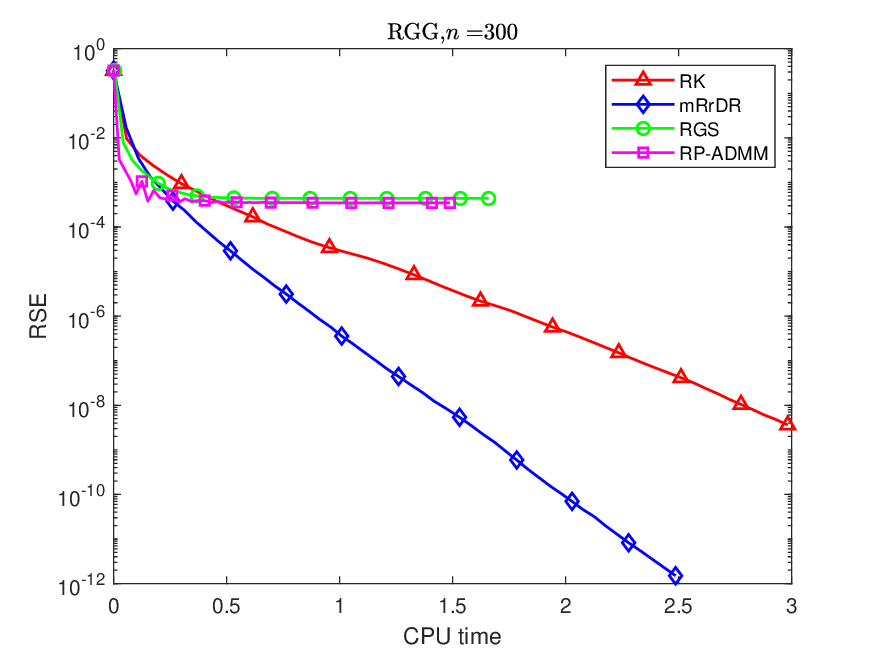}
	\end{tabular}
	\caption{Performance of RK, mRrDR, RGS, and RP-ADMM for real world data and the AC problem, where {\tt crew1} and {\tt model1} from SuiteSparse Matrix Collection \cite{Kol19},  {\tt ijcnn1} from LIBSVM \cite{chang2011libsvm}. We stop the algorithms if $\operatorname{RSE}<10^{-12}$ or if the number of iterations exceeds a certain limit.  }
	\label{figue11}
\end{figure}

\subsection{Comparison to {\tt pinv} and {\tt lsqminnorm}}
In this subsection,  we compare the performance of  mRrDR with  {\sc Matlab} functions  {\tt pinv} and {\tt lsqminnorm}.
To easily obtain the least-norm solution, we first generate full column rank coefficient matrices as follows.
For given $m\geq n$, and $\kappa>1$, we set $A=U D V^\top$, where $U \in \mathbb{R}^{m \times n}, D \in \mathbb{R}^{n\times n}$, and $V \in \mathbb{R}^{n \times n}$. Using {\sc Matlab}  notation, these matrices are generated by {\tt [U,$\sim$]=qr(randn(m,n),0)}, {\tt [V,$\sim$]=} {\tt qr(randn(n,n),0)}, and {\tt D=diag(1+($\kappa$-1).*rand(n,1))}. Note that the condition number of $A$ is now upper bounded by $\kappa$. Next, we generate the solution vector $x^*$ by setting $x^*={\tt randn(n,1)}$, and then we calculate $b=Ax^*$ to obtain the right-hand side vector of the linear system.  It can be observed that $x^*$ is the desired unique solution of the constructed linear system. 

Figure \ref{figue12} illustrates our experimental results with fixed $n$. The mRrDR method is implemented with $r=1$ and $(\alpha,\beta)=(0.5,0.4)$, or $r=2$ and $(\alpha,\beta)=(0.5,0.4)$. We terminate the mRrDR method if the accuracy of its approximate solution is comparable to that of the approximate solution obtained using {\tt pinv} and {\tt lsqminnorm}.
In Figure \ref{figue12}, we plot the computing time  against the increasing number of rows. It can be observed that when the number of rows exceeds certain thresholds, mRrDR  outperforms {\tt pinv} and {\tt lsqminnorm}.
We can also find that the performance of the mRrDR method is more sensitive to the increase of the condition number $\kappa$, as the convergence bound implies.

\begin{figure}[hptb]
	\centering
	\begin{tabular}{cc}
		\includegraphics[width=0.31\linewidth]{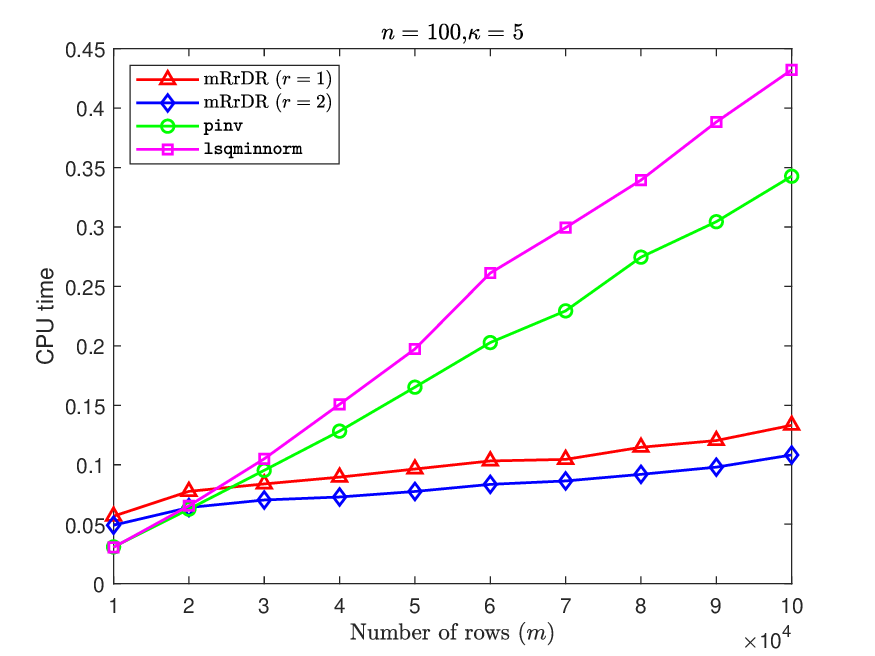}
		\includegraphics[width=0.31\linewidth]{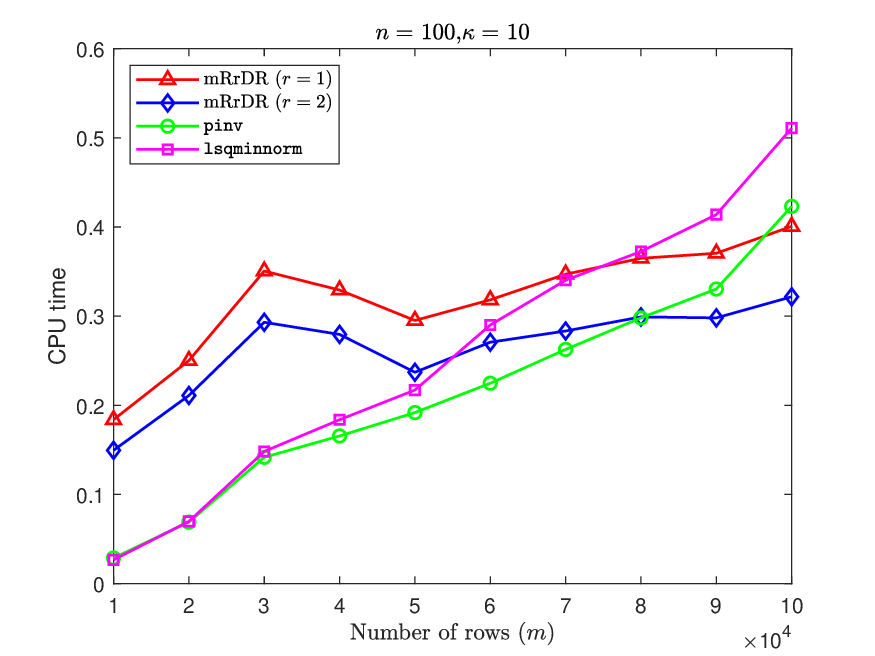}
		\includegraphics[width=0.31\linewidth]{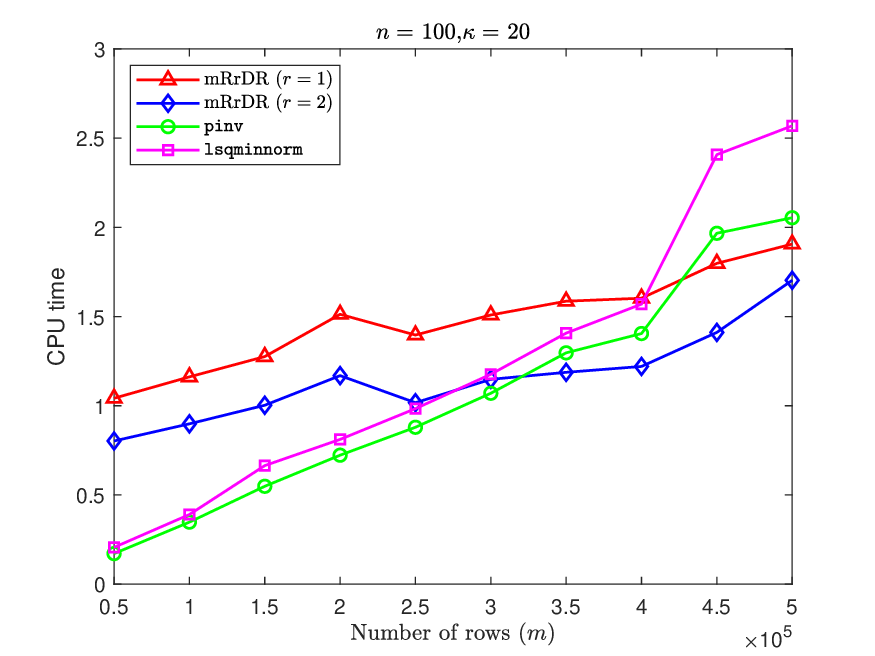}
	\end{tabular}
	\caption{Figures depict the CPU time (in seconds) vs increasing number of rows.  The title of each plot indicates the values of $n$ and $\kappa$. }
	\label{figue12}
\end{figure}

\section{Concluding remarks}\label{sec:conc}

In this work, we studied the $r$-sets-Douglas-Rachford method enriched with randomization and heavy ball momentum for solving linear systems.  We proved global linear convergence rates of the method as well as an accelerated linear rate in terms of the norm of expected error. Our convergence analysis showed the effectiveness of  randomization in  simplifying the analysis  of the DR method and making the divergent $r$-sets-DR method converge linearly.
We corroborated our theoretical results with extensive experimental testing and confirmed the better performance of the mRrDR method. 

There are still many possible future venues of research.
A bunch of advanced schemes for the selection of sets to project have been investigated in the literature of the Kaczamarz method, such as the greedy selection rule \cite{Bai18Gre}, its weighted variant in \cite{Ste20Wei} and the approach with sampling in \cite{De17}. These criteria are convenient to be adopted to the DR context for further improvement in efficiency. Moreover, the linear systems arising in practical problems are very likely to be inconsistent due to noise, which contradicts the basic assumption in this paper. The extended randomized Kaczmarz \cite{Zou12,Du20Ran} was proposed for such cases. It should also be a valuable topic to explore the extensions of DR methods for inconsistent linear systems.

\bibliographystyle{plain}
\bibliography{references}

\section{Appendix. Proof of the main results}
	\label{sec:appd}

For any $i\in\{1,\ldots,m\}$, we set
$$
T_{C_i}:=I-2\frac{a_{i}a_i^{\top}}{\|a_i\|^2_2}.
$$
It is easy to verify that $T_{C_i}T_{C_i}=I$, and thus for any $y\in \mathbb{R}^n$, $\|T_{C_i}y\|_2=\|y\|_2$.

\subsection{Proof of Theorems \ref{main-ThmrRDR} and \ref{THMfmm}}
We first prove some crucial facts about the algorithms.

\begin{lemma}\label{lemma-61}
	Let $\{z^k_r\}_{k=0}^{\infty}$ be the iteration sequence generated by Algorithm \ref{r-RDRK} or Algorithm \ref{r-mRDRK}.
	Then
	$$
	\|z^k_r-x^*\|_2=\|x^k-x^*\|_2.
	$$
\end{lemma}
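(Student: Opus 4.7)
The plan is to show that each inner reflection step preserves the distance to $x^*$, so after $r$ reflections the distance is unchanged. The statement will then follow by induction on $\ell$ from $0$ to $r$.

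First I would rewrite the update rule of $z^k_\ell$ in a form that exposes the affine reflection structure. Because $x^*$ solves $Ax=b$, in particular $\langle a_{j_{k_\ell}}, x^*\rangle = b_{j_{k_\ell}}$, so subtracting $x^*$ from both sides of the definition
\[
z^k_\ell = z^k_{\ell-1} - 2\,\frac{\langle a_{j_{k_\ell}}, z^k_{\ell-1}\rangle - b_{j_{k_\ell}}}{\|a_{j_{k_\ell}}\|_2^2}\, a_{j_{k_\ell}}
\]
yields
\[
z^k_\ell - x^* = \left(I - 2\,\frac{a_{j_{k_\ell}} a_{j_{k_\ell}}^\top}{\|a_{j_{k_\ell}}\|_2^2}\right)(z^k_{\ell-1}-x^*) = T_{C_{j_{k_\ell}}}(z^k_{\ell-1}-x^*).
\]

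Next I would invoke the isometry property of $T_{C_i}$ recorded just before the lemma: $T_{C_i}T_{C_i}=I$, so $T_{C_i}$ is orthogonal and $\|T_{C_i}y\|_2 = \|y\|_2$ for every $y\in\mathbb{R}^n$. Applying this with $y = z^k_{\ell-1}-x^*$ gives $\|z^k_\ell - x^*\|_2 = \|z^k_{\ell-1}-x^*\|_2$. A straightforward induction on $\ell\in\{1,\dots,r\}$, starting from $z^k_0=x^k$, then delivers $\|z^k_r - x^*\|_2 = \|x^k-x^*\|_2$.

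There is no real obstacle here — the result is essentially a restatement of the fact that a Householder-type reflection across a hyperplane containing $x^*$ is an isometry fixing the affine distance to $x^*$. The only thing one must be careful about is to use consistency of the system (so that $x^*\in C_{j_{k_\ell}}$ for every randomly chosen index), which is already assumed throughout the paper.
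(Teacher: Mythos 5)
Your proposal is correct and follows essentially the same route as the paper: both subtract $x^*$ using the consistency relation $\langle a_{j_{k_\ell}},x^*\rangle=b_{j_{k_\ell}}$ to obtain $z^k_\ell-x^*=T_{C_{j_{k_\ell}}}(z^k_{\ell-1}-x^*)$ and then invoke the isometry of the reflection operators, the only cosmetic difference being that you apply the norm preservation one reflection at a time by induction while the paper composes all $r$ operators first and applies it once.
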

\begin{proof}
	Note that
	\begin{equation}\label{prf-lemma1-1}
		\begin{aligned}
			z^k_r-x^*=&z^k_{r-1}-x^*-2\frac{\langle a_{j_{k_{r}}},z_{r-1}^k\rangle-b_{j_{k_{r}}}}{\|a_{j_{k_r}}\|^2_2}a_{j_{k_{r}}}
			=z^k_{r-1}-x^*-2\frac{\langle a_{j_{k_{r}}},z_{r-1}^k-x^*\rangle}{\|a_{j_{k_r}}\|^2_2}a_{j_{k_{r}}}
			\\
			=&\left(I-2\frac{a_{j_{k_r}}a_{j_{k_r}}^{\top}}{\|a_{j_{k_r}}\|^2_2}\right)(z^k_{r-1}-x^*)
			=T_{C_{j_{k_r}}}(z^k_{r-1}-x^*)
			\\
			=&T_{C_{j_{k_r}}}T_{C_{j_{k_{r-1}}}}\cdots T_{C_{j_{k_1}}}(z^k_{0}-x^*)
			= T_{C_{j_{k_r}}}T_{C_{j_{k_{r-1}}}}\cdots T_{C_{j_{k_1}}}(x^k-x^*).
		\end{aligned}
	\end{equation}
	Note that for any $y\in\mathbb{R}^n$, it holds that $\|y\|_2=\|T_{C_{j_{k_r}}}T_{C_{j_{k_{r-1}}}}$ $\cdots T_{C_{j_{k_1}}}y\|_2$.
	Hence, we have
	$$
	\|z^k_r-x^*\|_2=\|T_{C_{j_{k_r}}}T_{C_{j_{k_{r-1}}}}\cdots T_{C_{j_{k_1}}}(x^k-x^*)\|_2=\|x^k-x^*\|_2
	$$
	as desired.
\end{proof}

\begin{lemma}
	\label{lemma-main-rRDR}
	Let $\{x^k\}_{k=0}^{\infty}$ and $\{z^k_r\}_{k=0}^{\infty}$ be the sequences generated by Algorithm \ref{r-RDRK} or Algorithm \ref{r-mRDRK}. Then
	$$
	\|(1-\alpha)x^{k}+\alpha z_r^k-x^*\|^2_2=\big(\alpha^2+(1-\alpha)^2\big)\|x^k-x^*\|^2_2
	+2\alpha(1-\alpha)\langle z^k_r-z^*,x^k-x^*\rangle.
	$$
\end{lemma}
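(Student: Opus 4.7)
The plan is to prove Lemma \ref{lemma-main-rRDR} by a direct algebraic expansion, leveraging Lemma \ref{lemma-61} as the one nontrivial input. First I would rewrite
$$
(1-\alpha)x^k + \alpha z_r^k - x^* = (1-\alpha)(x^k-x^*) + \alpha(z_r^k-x^*),
$$
simply by splitting $x^*=(1-\alpha)x^*+\alpha x^*$. This reduces the left-hand side to the squared norm of a convex-like combination of the two vectors $x^k-x^*$ and $z_r^k-x^*$.

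Next I would expand the squared Euclidean norm using the standard identity $\|u+v\|_2^2 = \|u\|_2^2 + \|v\|_2^2 + 2\langle u,v\rangle$ with $u=(1-\alpha)(x^k-x^*)$ and $v=\alpha(z_r^k-x^*)$. This yields
$$
(1-\alpha)^2\|x^k-x^*\|_2^2 + \alpha^2\|z_r^k-x^*\|_2^2 + 2\alpha(1-\alpha)\langle x^k-x^*,\, z_r^k-x^*\rangle.
$$
At this point the only remaining step is to invoke Lemma \ref{lemma-61}, which gives the norm-preservation identity $\|z_r^k-x^*\|_2 = \|x^k-x^*\|_2$. Substituting $\|z_r^k-x^*\|_2^2 = \|x^k-x^*\|_2^2$ into the middle term and combining it with the first term produces the coefficient $\alpha^2+(1-\alpha)^2$ on $\|x^k-x^*\|_2^2$, which is exactly the claimed identity (reading the inner product in the statement as $\langle z_r^k-x^*, x^k-x^*\rangle$, consistent with the setting).

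There is essentially no hard step: the proof is two lines once Lemma \ref{lemma-61} is available. The only substantive ingredient is that each reflection $T_{C_i}$ is a Euclidean isometry fixing $x^*$, which is what powers Lemma \ref{lemma-61}; that fact has already been established in the excerpt. Thus the lemma follows by a bilinear expansion plus one substitution, and no case analysis, probabilistic argument, or choice of parameters is required.
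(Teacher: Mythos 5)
Your proposal is correct and follows exactly the paper's own argument: decompose the iterate as $(1-\alpha)(x^k-x^*)+\alpha(z_r^k-x^*)$, expand the squared norm bilinearly, and invoke Lemma \ref{lemma-61} to replace $\|z_r^k-x^*\|_2^2$ by $\|x^k-x^*\|_2^2$. Your reading of the inner product as $\langle z_r^k-x^*,x^k-x^*\rangle$ (the $z^*$ in the statement being a typo for $x^*$) matches the paper's intent.
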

\begin{proof}
	We have
	$$		\begin{aligned}
		\|(1-\alpha)x^{k}+\alpha z_r^k-x^*\|_2^2=&\|(1-\alpha)(x^{k}-x^*)+\alpha (z_r^k-x^*)\|_2^2
		\\
		=&\|(1-\alpha)(x^{k}-x^*)\|^2_2+2\alpha(1-\alpha)\langle x^{k}-x^*,z_r^k-x^*\rangle
		+\|\alpha(z^k_r-x^*)\|^2_2
		\\
		=&\big(\alpha^2+(1-\alpha)^2\big)\|x^k-x^*\|^2_2+2\alpha(1-\alpha)\langle z_r^k-x^*,x^k-x^*\rangle,
	\end{aligned}
	$$
	where the last equality follows from Lemma \ref{lemma-61}.
\end{proof}

\begin{lemma}
	\label{lemma-inrowspace}
	Let $\{x^k\}_{k=0}^{\infty}$ be the sequences generated by Algorithm \ref{r-RDRK} or Algorithm \ref{r-mRDRK} $(x^1=x^0)$ and $x_0^*=A^{\dagger}b+(I-A^\dagger A)x^0$. Then $ x^k - x^*_0 \in \operatorname{Row}(A) $.
\end{lemma}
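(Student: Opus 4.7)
The plan is to prove this by induction on $k$, exploiting the fact that $A^\dagger A$ is the orthogonal projector onto $\operatorname{Row}(A)$ and that each reflection $T_{C_i}$ preserves $\operatorname{Row}(A)$.

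For the base case, I would observe that
\[
x^0 - x_0^* = x^0 - A^\dagger b - (I - A^\dagger A)x^0 = A^\dagger A x^0 - A^\dagger b.
\]
Both $A^\dagger A x^0$ and $A^\dagger b$ lie in $\operatorname{Row}(A)$ (the first because $A^\dagger A$ is the orthogonal projector onto $\operatorname{Row}(A)$, the second because $A^\dagger b$ is the least-norm solution), so $x^0 - x_0^* \in \operatorname{Row}(A)$. Under the hypothesis $x^1 = x^0$ for the momentum version, the same holds for $x^1 - x_0^*$.

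For the inductive step, I need a small auxiliary observation: for any $y \in \operatorname{Row}(A)$ and any index $i$,
\[
T_{C_i} y \;=\; y \;-\; 2\,\frac{a_i^\top y}{\|a_i\|_2^2}\,a_i \;\in\; \operatorname{Row}(A),
\]
because $a_i \in \operatorname{Row}(A)$. Since $x_0^*$ solves $Ax=b$, the computation in the proof of Lemma \ref{lemma-61} (display \eqref{prf-lemma1-1}) gives
\[
z_r^k - x_0^* \;=\; T_{C_{j_{k_r}}} T_{C_{j_{k_{r-1}}}} \cdots T_{C_{j_{k_1}}} (x^k - x_0^*).
\]
Thus, assuming inductively that $x^k - x_0^* \in \operatorname{Row}(A)$ (and for the momentum variant also $x^{k-1} - x_0^* \in \operatorname{Row}(A)$), iterated application of the observation yields $z_r^k - x_0^* \in \operatorname{Row}(A)$.

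Finally I would plug into the update formula. For Algorithm \ref{r-RDRK},
\[
x^{k+1} - x_0^* \;=\; (1-\alpha)(x^k - x_0^*) \;+\; \alpha (z_r^k - x_0^*),
\]
which is a linear combination of elements of $\operatorname{Row}(A)$. For Algorithm \ref{r-mRDRK}, there is the additional term $\beta(x^k - x^{k-1}) = \beta\bigl((x^k - x_0^*) - (x^{k-1} - x_0^*)\bigr)$, which also lies in $\operatorname{Row}(A)$ by the inductive hypothesis. This closes the induction. There is no real obstacle here; the only thing to be careful about is having the right base case for the two-step recursion in the momentum algorithm, which is guaranteed by the assumption $x^1 = x^0$.
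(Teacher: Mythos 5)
Your proof is correct and follows essentially the same induction as the paper: both arguments rest on the observation that every update direction (the reflection steps and the momentum term) is a linear combination of rows of $A$. The only cosmetic difference is that you track $x^k - x_0^*$ directly in $\operatorname{Row}(A)$, whereas the paper first shows $x^k \in x^0 + \operatorname{Row}(A)$ and then subtracts $x_0^* \in x^0 + \operatorname{Row}(A)$ at the end.
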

\begin{proof}
	Since Algorithm \ref{r-RDRK} is a special case of Algorithm \ref{r-mRDRK} with $ \beta = 0 $, we only concentrate on the proof for Algorithm \ref{r-mRDRK}.
	We first prove that $ x^k \in x^0 + \operatorname{Row}(A) $ for any $ k\geq0 $ by induction.
	Initially, $ x^1 = x^0 \in x^0 + \operatorname{Row}(A) $. If $ x^k \in x^0 + \operatorname{Row}(A) $ holds for all $ k \leq t(t \geq 1) $, then
	$$
	z^t_r  = x^t - 2\frac{\langle a_{j_{t_{1}}},x^t-x^*\rangle}{\|a_{j_{t_1}}\|^2_2}a_{j_{t_{1}}} - \cdots -  2\frac{\langle a_{j_{t_{r}}},z_{r-1}^t-x^*\rangle}{\|a_{j_{t_r}}\|^2_2}a_{j_{t_{r}}} \in x^0 + \operatorname{Row}(A),
	$$
	and $
	x^t - x^{t-1} \in \operatorname{Row}(A).
	$
	Therefore,
	$$\begin{aligned}
		x^{t+1} & = (1-\alpha) x^t+\alpha z_{r}^t+\beta(x^t-x^{t-1}) \\
		& \in (1-\alpha)(x^0 + \operatorname{Row}(A)) + \alpha(x^0 + \operatorname{Row}(A)) + \operatorname{Row}(A) \\
		& = x^0 + \operatorname{Row}(A).
	\end{aligned}
	$$
	Thus $ x^k \in x^0 + \operatorname{Row}(A) $ holds for all $k\geq 0$. Note that $ x_0^* =A^{\dagger}b+(I-A^\dagger A)x^0 = A^{\dagger}(b-Ax^0) + x^0 \in x^0 + \operatorname{Row}(A)$, we arrive at the conclusion $ x^k - x^*_0 \in \operatorname{Row}(A) $.
\end{proof}

Now we are ready to prove the main results.  In fact, Theorem \ref{main-ThmrRDR} can be directly derived from Theorem \ref{THMfmm} by letting $\beta=0$. 
We include an individual proof of Theorem \ref{main-ThmrRDR} for readability, meanwhile showing the tightness of the convergence rate in a clear way.

\begin{proof}[Proof of Theorem \ref{main-ThmrRDR}]
	From Lemma \ref{lemma-main-rRDR} and taking the conditional expectation under the probability $\mbox{Pr}(\mbox{row}=j_{k_{\ell}})=\frac{\|a_{j_{k_{\ell}}}\|^2_2}{\|A\|_{F}^2}$, we get
	\begin{equation}\label{xk1-x}
		\begin{aligned}
			&\mathop{\mathbb{E}}\limits_{j_{k_r},\ldots,j_{k_1}}\big[\|x^{k+1}-x_{0}^*\|^2_2|x^k\big]=
			\mathop{\mathbb{E}}\limits_{j_{k_r},\ldots,j_{k_1}}\big[\|(1-\alpha)x^{k}+\alpha z_r^k-x_{0}^*\|^2_2|x^k\big]
			\\
			=&\left(\alpha^2+(1-\alpha)^2\right)\|x^k-x_{0}^*\|^2_2
			+2\alpha(1-\alpha)
			\mathop{\mathbb{E}}\limits_{j_{k_r},\ldots,j_{k_1}}\left[\left\langle T_{C_{j_{k_r}}}T_{C_{j_{k_{r-1}}}}\ldots T_{C_{j_{k_1}}}(x^k-x_{0}^*),x^k-x_{0}^*\right\rangle\right]\\
			=&\left(\alpha^2+(1-\alpha)^2\right)\|x^k-x_{0}^*\|^2_2 +2\alpha(1-\alpha)\left\langle \mathop{\mathbb{E}}\limits_{j_{k_r}}[T_{C_{j_{k_r}}}] \ldots \mathop{\mathbb{E}}\limits_{j_{k_1}}[T_{C_{j_{k_1}}}](x^k-x_{0}^*),x^k-x_{0}^* \right\rangle\\
			=&\left(\alpha^2+(1-\alpha)^2\right)\|x^k-x_{0}^*\|^2_2  +2\alpha(1-\alpha)\left\langle \left(I-2\frac{A^\top A}{\|A\|^2_F}\right)^r(x^k-x_{0}^*),x^k-x_{0}^* \right\rangle.
		\end{aligned}
	\end{equation}
	The first equality follows from Step $6$ in Algorithm \ref{r-RDRK}; the second equality follows from \eqref{prf-lemma1-1}; the third equality follows from the  linearity of the expectation and the independence of $j_{k_1},\ldots,j_{k_r}$; the last equality follows from the fact that for any $\ell\in\{1,\ldots,r\}$, 
	\begin{equation}\label{prf-equ613-1}
		\mathop{\mathbb{E}}\limits_{j_{k_\ell}}\left[T_{C_{j_{k_\ell}}}\right]=\sum\limits_{i=1}^m\frac{\|a_i\|^2_2}{\|A\|^2_F}
		\left(I-2\frac{a_ia_i^\top}{\|a_i\|^2_2}\right)
		=I-2\frac{A^\top A}{\|A\|^2_F}.
	\end{equation}
	By Lemma \ref{lemma-inrowspace}, we know that $x^k-x_{0}^*\in\mbox{Row}(A)=\mbox{Range}(A^\top)$.
	Then we have
	\begin{equation}\label{prf-equ1}
		(x^k-x_{0}^* )^\top \left(I-2\frac{A^\top A}{\|A\|^2_F}\right)^r(x^k-x_{0}^*)\leq \left(1-2\frac{\sigma_{\min}^2(A)}{\|A\|^2_F}\right)^r\|x^k-x_{0}^*\|^2_2,
	\end{equation}
	which implies
	$$
	\mathop{\mathbb{E}}\limits_{j_{k_r},\ldots,j_{k_1}}\big[\|x^{k+1}-x_{0}^*\|^2_2|x^k\big]\leq \left(\alpha^2+(1-\alpha)^2+2\alpha(1-\alpha)\left(1-2\frac{\sigma_{\min}^2(A)}{\|A\|^2_F}\right)^r\right)\|x^k-x_{0}^*\|^2_2.
	$$
	Taking the expectation over the entire history we have
	$$
	\mathop{\mathbb{E}} [ \|x^{k+1}-x_{0}^*\|^2_2 ] \leq \left(\alpha^2+(1-\alpha)^2+2\alpha(1-\alpha)\left(1-2\frac{\sigma_{\min}^2(A)}{\|A\|^2_F}\right)^r\right)\mathop{\mathbb{E}} \left[ \|x^k-x_{0}^*\|^2_2\right].
	$$
	By induction on the iteration index $k$, we can obtain the desired result.
\end{proof}

\begin{remark}\label{xie-tight}
	If $\sigma_1(A) = \sigma_{\min}(A)$, that is, all nonzero singular values of $A$ are equal, then the inequality in \eqref{prf-equ1} becomes equality. As a result, the upper bound in Theorem \ref{main-ThmrRDR} is also equality, indicating that the upper bound in Theorem \ref{main-ThmrRDR} is tight.
\end{remark}

To prove Theorem \ref{THMfmm}, the following result is required.
\begin{lemma}[\cite{han2022pseudoinverse}, Lemma $8.1$]\label{lemma-key}
	Fix $F^1=F^0\geq 0$ and let $\{F^k\}_{k\geq 0}$ be a sequence of nonnegative real numbers satisfying the relation
	$$
	F^{k+1}\leq \gamma_1 F^k+\gamma_2F^{k-1},\ \ \forall \ k\geq 1,
	$$
	where $\gamma_2\geq0,\gamma_1+\gamma_2<1$ and at least one of the coefficients $\gamma_1,\gamma_2$ is positive. Then the sequence satisfies the relation
	$$F^{k+1}\leq q^k(1+\tau)F^0,\ \ \forall \ k\geq 0,$$
	where $q=\frac{\gamma_1+\sqrt{\gamma_1^2+4\gamma_2}}{2}$ and $\tau=q-\gamma_1\geq 0$. Moreover,
	$
	q\geq \gamma_1+\gamma_2,
	$
	with equality if and only if $\gamma_2=0$.
\end{lemma}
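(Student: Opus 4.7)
The plan is to treat the stated inequality as a discrete second-order linear recurrence and compare it with the dominant root of its characteristic polynomial, then conclude by induction on $k$. By construction, $q=\frac{\gamma_{1}+\sqrt{\gamma_{1}^{2}+4\gamma_{2}}}{2}$ is the positive root of $\lambda^{2}-\gamma_{1}\lambda-\gamma_{2}=0$, so it satisfies the identity
\[
q^{2}=\gamma_{1}q+\gamma_{2},
\]
which is the engine of the inductive step. Since $\gamma_{2}\geq 0$, the discriminant $\gamma_{1}^{2}+4\gamma_{2}$ is at least $\gamma_{1}^{2}$, which immediately gives $q\geq\gamma_{1}$ and hence $\tau=q-\gamma_{1}\geq 0$, matching the claim in the statement. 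One also notes the side identity $q\tau=q^{2}-\gamma_{1}q=\gamma_{2}$, which will be useful in the base case.

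Next I would verify two base cases. For $k=0$, the hypothesis $F^{1}=F^{0}$ together with $\tau\geq 0$ gives $F^{1}\leq(1+\tau)F^{0}$. For $k=1$, the recurrence yields $F^{2}\leq\gamma_{1}F^{1}+\gamma_{2}F^{0}=(\gamma_{1}+\gamma_{2})F^{0}$, while the target bound is $q(1+\tau)F^{0}=(q+q\tau)F^{0}=(q+\gamma_{2})F^{0}$ by the side identity; so the desired estimate reduces to $\gamma_{1}\leq q$, which has just been established.

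For the inductive step, suppose $F^{k}\leq q^{k-1}(1+\tau)F^{0}$ and $F^{k-1}\leq q^{k-2}(1+\tau)F^{0}$ for some $k\geq 2$. Substituting into the recurrence and invoking the characteristic identity,
\[
F^{k+1}\leq\gamma_{1}F^{k}+\gamma_{2}F^{k-1}\leq(1+\tau)F^{0}\,q^{k-2}(\gamma_{1}q+\gamma_{2})=q^{k}(1+\tau)F^{0},
\]
which closes the induction and delivers the main estimate.

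Finally, the ``Moreover'' clause reduces to showing $\sqrt{\gamma_{1}^{2}+4\gamma_{2}}\geq\gamma_{1}+2\gamma_{2}$. When the right side is non-positive this is automatic; otherwise squaring shows the inequality is equivalent to $4\gamma_{2}(1-\gamma_{1}-\gamma_{2})\geq 0$, which holds by the assumptions $\gamma_{2}\geq 0$ and $\gamma_{1}+\gamma_{2}<1$, with equality if and only if $\gamma_{2}=0$ (since $1-\gamma_{1}-\gamma_{2}>0$ is strict). The whole argument is essentially routine once the characteristic identity $q^{2}=\gamma_{1}q+\gamma_{2}$ has been isolated; the only mildly delicate point is the $k=1$ base case, where one must combine the recurrence with the side identity $q\tau=\gamma_{2}$ to match the target bound exactly.
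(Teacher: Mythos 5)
The paper never proves this lemma itself --- it is imported verbatim from the cited reference --- so there is no in-paper argument to compare against; your proof must stand on its own. Your identities $q^2=\gamma_1 q+\gamma_2$, $\tau=q-\gamma_1\ge0$ and $q\tau=\gamma_2$ are correct, the base cases are fine, and the ``Moreover'' clause is handled correctly (modulo one small omission: in the case $\gamma_1+2\gamma_2\le0$ you assert the inequality is ``automatic'' but do not close the equality analysis there; it is strict because that case forces $\gamma_2>0$, so the left side is positive while the right side is $\le0$). The genuine gap is in the inductive step: the bound $\gamma_1 F^k\le\gamma_1 q^{k-1}(1+\tau)F^0$ requires $\gamma_1\ge0$, but the lemma's hypotheses ($\gamma_2\ge0$, $\gamma_1+\gamma_2<1$, at least one coefficient positive) do not exclude $\gamma_1<0$ --- for instance $\gamma_1=-0.1$, $\gamma_2=0.5$ is admissible, and multiplying an upper bound on $F^k$ by a negative $\gamma_1$ reverses the inequality. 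Discarding the term via $\gamma_1F^k\le0$ does not rescue the step either, since one would then need $\gamma_2\le q^2=\gamma_1q+\gamma_2$, i.e.\ $\gamma_1\ge0$ again. (Your argument is valid under the extra assumption $\gamma_1\ge0$, which does hold in this paper's application, where $\gamma_1\ge(2\alpha-1)^2\ge0$; but the lemma as stated is more general.)

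The standard repair uses exactly your two identities but deploys them through a Lyapunov-type combination rather than a termwise substitution, and never multiplies an inequality by $\gamma_1$: set $G^k:=F^{k+1}+\tau F^k$ and add $\tau F^k$ to both sides of the recurrence to get
$$
G^k\le(\gamma_1+\tau)F^k+\gamma_2F^{k-1}=qF^k+q\tau F^{k-1}=q\,G^{k-1},\qquad k\ge1 .
$$
Under the stated hypotheses $q>0$ (if $q=0$ then $\gamma_2=0$ and $\gamma_1\le0$, contradicting positivity of at least one coefficient) and each $G^{k-1}\ge0$, so this telescopes to $G^k\le q^kG^0=q^k(F^1+\tau F^0)=q^k(1+\tau)F^0$, and $F^{k+1}\le G^k$ because $\tau F^k\ge0$. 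Rewriting your induction in this form closes the gap and covers the full range of parameters allowed by the statement.
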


Now, we are going to prove Theorem \ref{THMfmm}.

\begin{proof}[Proof of Theorem \ref{THMfmm}]
	First, we have
	\begin{equation}\label{prf-THMm1}
		\begin{aligned}
			&\|x^{k+1}-x_{0}^*\|_2^2=\|(1-\alpha) x^{k}+\alpha z^{k}_r+\beta(x^k-x^{k-1})-x_{0}^*\|^2_2\\
			= &\underbrace{\|(1-\alpha) x^{k}+\alpha z^k_r-x_{0}^*\|^2_2}_{\textcircled{a}}+\underbrace{2\beta\langle(1-\alpha) x^{k}+\alpha z^k_r-x_{0}^*,x^k-x^{k-1} \rangle}_{\textcircled{b}}
			+\underbrace{\beta^2\|x^k-x^{k-1}\|^2_2}_{\textcircled{c}}.
		\end{aligned}
	\end{equation}
	We now analyze the three expressions $\textcircled{a},\textcircled{b},\textcircled{c}$ separately. From Lemma \ref{lemma-main-rRDR}, we have
	$$\textcircled{a}=\big(\alpha^2+(1-\alpha)^2\big)\|x^k-x_{0}^*\|^2_2+2\alpha(1-\alpha)\langle z^r_k-x_{0}^*,x^k-x_{0}^*\rangle.
	$$
	We now bound the second expression. First, we have
	$$\begin{aligned}
		\textcircled{b}=&2(1-\alpha) \beta\langle x^{k}-x_{0}^*,x^k-x^{k-1} \rangle+2\alpha\beta\langle z^k_r-x_{0}^*,x^k-x^{k-1} \rangle
		\\
		=&2(1-\alpha)\beta \langle x^{k}-x_{0}^*,x^k-x_{0}^* \rangle+2(1-\alpha)\beta \langle x^{k}-x_{0}^*,x_{0}^*-x^{k-1} \rangle
		+2\alpha\beta\langle z^k_r-x_{0}^*,x^k-x^{k-1} \rangle\\
		=&2(1-\alpha) \beta\|x^{k}-x_{0}^*\|^2_2+2(1-\alpha)\beta \langle x^{k}-x_{0}^*,x_{0}^*-x^{k-1} \rangle
		+2\alpha\beta\langle z^k_r-x_{0}^*,x^k-x^{k-1} \rangle.
	\end{aligned}$$
	Noting that
	$ 2\langle x^{k}-x_{0}^*,x_{0}^*-x^{k-1} \rangle\leq \|x^k-x_{0}^*\|^2_2+\|x^{k-1}-x_{0}^*\|^2_2, $
	which implies
	$$\textcircled{b}\leq 3(1-\alpha)\beta \|x^{k}-x_{0}^*\|^2_2+(1-\alpha) \beta \|x^{k-1}-x_{0}^*\|^2_2+2\alpha\beta\langle z^k_r-x_{0}^*,x^k-x^{k-1} \rangle.$$
	The third expression can be bounded by
	$$\textcircled{c}\leq2\beta^2 \|x^{k}-x_{0}^*\|^2_2+2\beta^2\|x^{k-1}-x_{0}^*\|^2_2.$$
	By substituting all the bounds  into \eqref{prf-THMm1}, we obtain
	$$\begin{aligned}
		\|x^{k+1}-x_{0}^*\|_2^2 &
		\leq \big(\alpha^2+(1-\alpha)^2\big)\|x^k-x_{0}^*\|^2_2+2\alpha(1-\alpha)\langle z^k_r-x_{0}^*,x^k-x_{0}^*\rangle\\
		& \quad +3(1-\alpha) \beta \|x^{k}-x_{0}^*\|^2_2+(1-\alpha)\beta \|x^{k-1}-x_{0}^*\|^2_2+2\alpha\beta\langle z^k_r-x_{0}^*,x^k-x^{k-1} \rangle
		\\
		& \quad +2\beta^2 \|x^{k}-x_{0}^*\|^2_2+2\beta^2\|x^{k-1}-x_{0}^*\|^2_2
		\\
		&\leq  \big(\alpha^2+(1-\alpha)^2+3(1-\alpha)\beta+2\beta^2\big)\|x^k-x_{0}^*\|^2_2+\big((1-\alpha)\beta+2\beta^2\big)\|x^{k-1}-x_{0}^*\|^2_2
		\\
		& \quad +2\alpha(1-\alpha)\langle z^k_r-x_{0}^*,x^k-x_{0}^*\rangle+2\alpha\beta\langle z^k_r-x_{0}^*,x^k-x^{k-1} \rangle
	\end{aligned}$$
	Now taking the conditional expectation under the probability $\mbox{Pr}(\mbox{row}=j_{k_{\ell}})=\frac{\|a_{j_{k_{\ell}}}\|^2_2}{\|A\|_{F}^2}$, we get
	\begin{equation}\label{xie-e-0909}
		\begin{aligned}
			\mathbb{E}\big[\|x^{k+1}-x_{0}^*\|^2_2|x^k\big]
			\leq& \big(\alpha^2+(1-\alpha)^2+3(1-\alpha)\beta+2\beta^2\big)\|x^k-x_{0}^*\|^2_2\\
			&+\big((1-\alpha)\beta+2\beta^2\big)\|x^{k-1}-x_{0}^*\|^2_2
			\\
			&+\underbrace{2\alpha(1-\alpha)\left\langle \left(I-2\frac{A^\top A}{\|A\|^2_F}\right)^r(x^k-x_{0}^*),x^k-x_{0}^* \right\rangle}_{\textcircled{d}}  \\
			& \
			+\underbrace{2\alpha\beta\left\langle \left(I-2\frac{A^\top A}{\|A\|^2_F}\right)^r(x^k-x_{0}^*),x^{k}-x^{k-1} \right\rangle}_{\textcircled{e}}.
	\end{aligned}\end{equation}
	Similar to the argument in \eqref{prf-equ1}, we know that
	$$\textcircled{d}\leq 2\alpha(1-\alpha)\left(1-2\frac{\sigma_{\min}^2(A)}{\|A\|^2_F}\right)^r\|x^k-x_{0}^*\|^2_2.
	$$
	For expression $\textcircled{e}$,  we have
	$$
	\begin{aligned}
		\textcircled{e}&=2\alpha\beta\left\langle \left(I-2\frac{A^\top A}{\|A\|^2_F}\right)^r(x^k-x_{0}^*),x^{k}-x_{0}^* \right\rangle+2\alpha\beta\left\langle \left(I-2\frac{A^\top A}{\|A\|^2_F}\right)^r(x^k-x_{0}^*),x_{0}^*- x^{k-1}\right\rangle\\
		&\leq3\alpha\beta\left\|\left(I-2\frac{A^\top A}{\|A\|^2_F}\right)^{r/2}(x^k-x_{0}^*)\right\|^2_2
		+\alpha\beta\left\|\left(I-2\frac{A^\top A}{\|A\|^2_F}\right)^{r/2}(x_{0}^*-x^{k-1})\right\|^2_2
		\\
		&\leq 3\alpha\beta\left(1-2\frac{\sigma_{\min}^2(A)}{\|A\|^2_F}\right)^r\|x^k-x_{0}^*\|^2_2+
		\alpha\beta\left(1-2\frac{\sigma_{\min}^2(A)}{\|A\|^2_F}\right)^r\|x^{k-1}-x_{0}^*\|^2_2,
	\end{aligned}
	$$
	where the first inequality follows from $2\langle v,u\rangle\leq \|v\|^2_2+\|u\|^2_2$ and
	the second inequality follows from the fact that $x^k-x_{0}^*\in\mbox{Row}(A)$, $x^{k-1}-x_{0}^*\in\mbox{Row}(A)$, and \eqref{prf-equ1}.
	Hence
	$$\begin{aligned}
		&\mathbb{E}\big[\|x^{k+1}-x_{0}^*\|^2_2|x^k\big] \leq \underbrace{\left(2\beta^2+(1-\alpha)\beta+\alpha\beta\left(1-2\frac{\sigma_{\min}^2(A)}{\|A\|^2_F}\right)^r\right)}_{\gamma_2}
		\|x^{k-1}-x_{0}^*\|^2_2		\\
		& +  \underbrace{\left(\alpha^2+(1-\alpha)^2+\left(2\alpha(1-\alpha)+3\alpha\beta\right)\left(1-2\frac{\sigma_{\min}^2(A)}{\|A\|^2_F}\right)^r
			+2\beta^2+3(1-\alpha)\beta
			\right)}_{\gamma_1}\|x^k-x_{0}^*\|^2_2.
	\end{aligned}
	$$
	By taking expectation over the entire history, and letting $F^k:=\mathbb{E}[\|x^k-x_{0}^*\|^2_2]$, we get the relation
	\begin{equation}\label{prf-THMm10}
		F^{k+1}\leq \gamma_1F^k+\gamma_2F^{k-1}.
	\end{equation}
	Noting that the requirements for the coefficients $\gamma_1$ and $\gamma_2$ in Lemma \ref{lemma-key} are fulfilled, i.e. $\gamma_2\geq0,\gamma_1+\gamma_2<1$ and at least one of the coefficients $\gamma_1,\gamma_2$ is positive. Indeed, since $\alpha\in(0,1)$ and $\beta\geq0$, we know that $\gamma_2\geq0$. If $\gamma_2=0$, then $\beta=0$ and now we have $$\gamma_1=\alpha^2+(1-\alpha)^2+2\alpha(1-\alpha)
	\left(1-2\frac{\sigma_{\min}^2(A)}{\|A\|^2_F}\right)^r>0,$$
	which implies that at least one of the coefficients $\gamma_1,\gamma_2$ is positive.
	The condition $\gamma_1+\gamma_2<1$ holds by the assumption.
	Then apply Lemma \ref{lemma-key} to \eqref{prf-THMm10}, one can get the theorem.
\end{proof}

\begin{remark}
	In \eqref{xie-e-0909}, the inequality $2\langle v,u\rangle\leq \|v\|_2^2+\|u\|^2_2$ was used to estimate the expression $\textcircled{e}$. For future work, one may improve this estimate by using the parameterized Young's inequality $2\langle v,u\rangle\leq \varepsilon\|v\|_2^2+\frac{1}{\varepsilon}\|u\|_2^2$ and optimizing $\varepsilon$ over $\varepsilon>0$.
\end{remark}

\subsection{Proof of Theorems \ref{main-ThmrRDRv2}, \ref{THMfm2}, and \ref{main-ThmrRDRK}}

The following lemmas are useful for our proof.

\begin{lemma}\label{lemma-613}
	Consider the singular value decomposition $A=U\Sigma V^\top$ and let $\{x^k\}_{k=0}^{\infty}$ be the sequences generated by Algorithm \ref{r-mRDRK} or Algorithm \ref{r-RDRK} $(\beta=0)$. Then
	$$ V^\top\mathbb{E}[x^{k+1}-x^*]=\left((1-\alpha+\beta)I+\alpha\left(I-\frac{2\Sigma^\top\Sigma}{\|A\|^2_F}\right)^r\right)
	V^\top\mathbb{E}[x^k-x^*]-\beta V^\top\mathbb{E}[x^{k-1}-x^*].
	$$
\end{lemma}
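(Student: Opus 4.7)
The plan is to take expectations directly in the update formula of Algorithm \ref{r-mRDRK} and then rotate into the SVD basis. First I would rewrite the recursion as
$$x^{k+1} - x^* = (1-\alpha)(x^k - x^*) + \alpha(z_r^k - x^*) + \beta\bigl((x^k-x^*) - (x^{k-1}-x^*)\bigr),$$
and substitute the identity $z_r^k - x^* = T_{C_{j_{k_r}}}T_{C_{j_{k_{r-1}}}}\cdots T_{C_{j_{k_1}}}(x^k-x^*)$ that was already established in \eqref{prf-lemma1-1}. Conditioning on $x^k$ and $x^{k-1}$ and using the independence of $j_{k_1},\ldots,j_{k_r}$ together with \eqref{prf-equ613-1}, which asserts $\mathbb{E}[T_{C_{j_{k_\ell}}}] = I - \tfrac{2A^\top A}{\|A\|_F^2}$, I obtain
$$\mathbb{E}\bigl[z_r^k - x^* \,\bigm|\, x^k\bigr] = \left(I - \frac{2A^\top A}{\|A\|_F^2}\right)^{\!r}(x^k - x^*).$$

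Next I would apply the tower property and linearity of expectation to the recursion above. This yields the closed-form mean recursion
$$\mathbb{E}[x^{k+1}-x^*] = \left((1-\alpha+\beta)I + \alpha\left(I - \frac{2A^\top A}{\|A\|_F^2}\right)^{\!r}\right)\mathbb{E}[x^k-x^*] - \beta\,\mathbb{E}[x^{k-1}-x^*].$$

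Finally, to transport this identity into the singular-vector basis, I would left-multiply by $V^\top$ and use $A^\top A = V\Sigma^\top \Sigma V^\top$ with $V^\top V = I$, which gives the intertwining relation
$$V^\top\left(I - \frac{2A^\top A}{\|A\|_F^2}\right)^{\!r} = \left(I - \frac{2\Sigma^\top \Sigma}{\|A\|_F^2}\right)^{\!r} V^\top.$$
Applying this to the mean recursion immediately produces the statement of the lemma. No serious obstacle is expected: every step is an algebraic manipulation, and the only subtlety — factoring the expectation of the product of reflections into the product of expectations — has already been handled in the derivation of \eqref{xk1-x}, so the proof essentially reduces to recording those computations for the mean and then diagonalizing by $V$.
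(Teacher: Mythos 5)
Your proposal is correct and follows essentially the same route as the paper's own proof: rewrite the update in terms of $x^k-x^*$ and $z_r^k-x^*$, use the factorization of the expected product of reflections via \eqref{prf-equ613-1} to get the conditional mean, apply the tower property, and then diagonalize with $A^\top A=V\Sigma^\top\Sigma V^\top$. No gaps.
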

\begin{proof}
	First, we have
	$$
	\begin{aligned}
		x^{k+1}-x^*&=(1-\alpha) x^{k}+\alpha z^{k}_r+\beta(x^k-x^{k-1})-x^*
		\\
		&=(1-\alpha)(x^k-x^*)+\alpha(z^{k}_r-x^*)+\beta(x^k-x^{k-1}).
	\end{aligned}
	$$
	Taking expectations, we have
	$$
	\begin{aligned}
		\mathbb{E}\big[x^{k+1}-x^*|x^k\big]&=(1-\alpha)( x^k-x^*)+\alpha\mathbb{E}\big[z^{k}_r-x^*|x^k\big]+\beta (x^k-x^{k-1})\\
		&= (1-\alpha)(x^k-x^*)+
		\alpha\left(I-2\frac{A^\top A}{\|A\|^2_F}\right)^r(x^k-x^*)+\beta (x^k-x^{k-1})\\
		&=\left((1-\alpha+\beta)I+\alpha\left(I-2\frac{A^\top A}{\|A\|^2_F}\right)^r\right)(x^k-x^*)-\beta (x^{k-1}-x^*),
	\end{aligned}
	$$
	where the second equality follows from \eqref{prf-equ613-1}.
	Taking the expectations again, we have
	\begin{equation}\label{lemmaeq-613}
		\mathbb{E}[x^{k+1}-x^*]=\left((1-\alpha+\beta)I+\alpha\left(I-2\frac{A^\top A}{\|A\|^2_F}\right)^r\right)\mathbb{E}[x^k-x^*]-\beta \mathbb{E}[x^{k-1}-x^*].
	\end{equation}
	Plugging $A^\top A=V\Sigma^\top\Sigma V^\top$ into \eqref{lemmaeq-613}, and multiplying both sides form the left by $V^\top$, we can get the lemma.
\end{proof}

\begin{lemma}
	\label{lemma-xie-09}
	Suppose the nonzero singular values of $A\in\mathbb{R}^{m\times n}$ are $\sigma_1(A)\geq\sigma_2(A)\geq\ldots\geq \sigma_t(A)=\sigma_{\min}^2(A)$ and  $r\in\mathbb{Z}_{+}$. Then for any $1\leq i\leq t$,
	\begin{equation}\label{lemma-xie-e0908}
		\left(1-2\frac{\sigma_{i}^2(A)}{\|A\|^2_F}\right)^r\leq \left(1-2\frac{\sigma_{\min}^2(A)}{\|A\|^2_F}\right)^r.
	\end{equation}
\end{lemma}
\begin{proof}
	Since
	$$1-2\frac{\sigma_{i}^2(A)}{\|A\|^2_F}\leq 1-2\frac{\sigma_{\min}^2(A)}{\|A\|^2_F},
	$$
	we know that \eqref{lemma-xie-e0908} holds provided that $r$ is odd. Next, we consider the case where $r$ is even. Since
	$$
	\left(1-2\frac{\sigma_{i}^2(A)}{\|A\|^2_F}\right)^2-\left( 1-2\frac{\sigma_{\min}^2(A)}{\|A\|^2_F}\right)^2=\frac{4\left(\sigma_{\min}^2(A)-\sigma_{i}^2(A)\right)\left(
		\|A\|_F^2-\sigma_{\min}^2(A)-\sigma_{i}^2(A)\right)}{\|A\|_{F}^4}\leq0,
	$$
	we know that $\left|1-2\frac{\sigma_{i}^2(A)}{\|A\|^2_F}\right|\leq\left| 1-2\frac{\sigma_{\min}^2(A)}{\|A\|^2_F}\right|$. This implies that \eqref{lemma-xie-e0908} holds for the case where $r$ is even.
\end{proof}

\begin{lemma}[\cite{fillmore1968linear,elaydi1996introduction}]\label{Lemma-relation}
	Consider the second degree linear homogeneous recurrence relation:
	$$
	r^{k+1}=\gamma_{1} r^{k}+\gamma_{2} r^{k-1}
	$$
	with initial conditions $r^{0}, r^{1} \in \mathbb{R}$. Assume that the constant coefficients $\gamma_{1}$ and $\gamma_{2}$ satisfy the inequality $\gamma_{1}^{2}+4 \gamma_{2}<0$ (the roots of the characteristic equation $t^{2}-\gamma_{1} t-$ $\gamma_{2}=0$ are imaginary). Then there are complex constants $c_{0}$ and $c_{1}$ (depending on the initial conditions $r_{0}$ and $r_{1}$) such that:
	$$
	r^{k}=2 M^{k}\left(c_{0} \cos (\theta k)+c_{1} \sin (\theta k)\right)
	$$
	where $M=\left(\sqrt{\frac{\gamma_{1}^{2}}{4}+\frac{\left(-\gamma_{1}^{2}-4 \gamma_{2}\right)}{4}}\right)=\sqrt{-\gamma_{2}}$ and $\theta$ is such that $\gamma_{1}=2 M \cos (\theta)$ and $\sqrt{-\gamma_{1}^{2}-4 \gamma_{2}}=2 M \sin (\theta)$.
\end{lemma}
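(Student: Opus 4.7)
The plan is to diagonalize the expectation recurrence via the SVD and then apply Lemma~\ref{Lemma-relation} componentwise. First I would apply Lemma~\ref{lemma-613} with the specific choice $x^{*}=x_{0}^{*}$ to rewrite the recurrence as
\begin{equation*}
V^\top \mathbb{E}[x^{k+1}-x_{0}^{*}] \;=\; D\, V^\top \mathbb{E}[x^{k}-x_{0}^{*}] \;-\; \beta\, V^\top \mathbb{E}[x^{k-1}-x_{0}^{*}],
\end{equation*}
where $D := (1-\alpha+\beta)I + \alpha\bigl(I - 2\Sigma^\top\Sigma/\|A\|_F^2\bigr)^{r}$ is diagonal. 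Setting $r_{i}^{k}:=\bigl(V^\top\mathbb{E}[x^{k}-x_{0}^{*}]\bigr)_{i}$, each coordinate then satisfies a decoupled scalar second-order linear recurrence
\begin{equation*}
r_{i}^{k+1} \;=\; \gamma_{1,i}\, r_{i}^{k} \;-\; \beta\, r_{i}^{k-1}, \qquad \gamma_{1,i} \;:=\; (1-\alpha+\beta) + \alpha\!\left(1-\frac{2\sigma_{i}^{2}(A)}{\|A\|_F^2}\right)^{\!r}\!,
\end{equation*}
with initial data $r_{i}^{0}=r_{i}^{1}=\bigl(V^\top(x^{0}-x_{0}^{*})\bigr)_{i}$ coming from $x^{1}=x^{0}$. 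By Lemma~\ref{lemma-inrowspace}, $x^{k}-x_{0}^{*}\in\mbox{Row}(A)$ for all $k$, so $r_{i}^{k}=0$ whenever $i>\rank(A)$; it therefore suffices to analyze the recurrence for $1\leq i\leq \rank(A)$, where $\sigma_{i}(A)>0$.

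Second, I would invoke Lemma~\ref{Lemma-relation} to conclude that, whenever the discriminant condition $\gamma_{1,i}^{2}<4\beta$ holds, one has
\begin{equation*}
r_{i}^{k} \;=\; 2(\sqrt{\beta})^{k}\bigl(c_{0}^{(i)}\cos(\theta_{i}k)+c_{1}^{(i)}\sin(\theta_{i}k)\bigr),
\end{equation*}
for constants $c_{0}^{(i)}, c_{1}^{(i)}$ depending only on $r_{i}^{0}$. Squaring, bounding the trigonometric factor by a constant, and summing over $i$ (using that $V$ is orthogonal) gives $\|\mathbb{E}[x^{k}-x_{0}^{*}]\|_{2}^{2}\leq \beta^{k} c$ with $c:=4\sum_{i=1}^{\rank(A)}\bigl(|c_{0}^{(i)}|+|c_{1}^{(i)}|\bigr)^{2}$, which is the claimed bound.

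The main obstacle is verifying the discriminant condition $\gamma_{1,i}^{2}<4\beta$ simultaneously for every $1\leq i\leq \rank(A)$, and this is precisely where the assumptions on $\alpha$ and $\beta$ enter. Introducing $\lambda_{i}:=\alpha\bigl(1-(1-2\sigma_{i}^{2}/\|A\|_F^2)^{r}\bigr)$, note that $\gamma_{1,i}=(1+\beta)-\lambda_{i}$, so the discriminant condition is equivalent to the two-sided inequality $(1-\sqrt{\beta})^{2}<\lambda_{i}<(1+\sqrt{\beta})^{2}$. For the lower bound I would observe that, whether $r$ is odd or even, $(1-2\sigma_{i}^{2}/\|A\|_F^{2})^{r}\leq \delta_{1}^{r}$, with the maximum attained at $\sigma_{i}=\sigma_{\min}(A)$; hence $\lambda_{i}\geq\alpha(1-\delta_{1}^{r})$, and the hypothesis $\beta>\bigl(1-\sqrt{\alpha(1-\delta_{1}^{r})}\bigr)^{2}$ together with $\beta<1$ rearranges exactly to $\alpha(1-\delta_{1}^{r})>(1-\sqrt{\beta})^{2}$. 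For the upper bound, which is the more delicate of the two because for odd $r$ the quantity $(1-2\sigma_{i}^{2}/\|A\|_F^{2})^{r}$ can become very negative when $\sigma_{i}=\sigma_{\max}(A)$, the tailored hypothesis $\alpha<1/\bigl(1-(1-2\sigma_{\max}^{2}(A)/\|A\|_F^{2})^{r}\bigr)$ forces $\lambda_{i}<1<(1+\sqrt{\beta})^{2}$ (with the even-$r$ case being trivial since then $\lambda_{i}\leq\alpha<1$). Once this verification is complete, the componentwise application of Lemma~\ref{Lemma-relation} delivers the accelerated rate.
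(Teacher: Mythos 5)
Your proposal does not prove the statement in question. The statement is Lemma~\ref{Lemma-relation} itself: a self-contained, purely scalar fact about a second-order linear homogeneous recurrence $r^{k+1}=\gamma_1 r^k+\gamma_2 r^{k-1}$ whose characteristic roots are imaginary. What you have written is instead a proof of Theorem~\ref{THMfm2} (the accelerated rate for $\|\mathbb{E}[x^k-x_0^*]\|_2^2$), and in the middle of it you explicitly \emph{invoke} Lemma~\ref{Lemma-relation} as a black box --- so as an argument for the lemma it is circular, and as a response to the prompt it addresses the wrong target. (For what it is worth, the paper does not prove this lemma either; it cites it to the references \cite{fillmore1968linear,elaydi1996introduction} and uses it exactly as you do, in the proof of Theorem~\ref{THMfm2}. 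Your verification of the discriminant condition $\gamma_{1,i}^2<4\beta$ via the hypotheses on $\alpha$ and $\beta$ is in fact a reasonable reconstruction of that theorem's proof, including the diagonalization via the SVD and the treatment of the zero singular values --- but that is a different statement.)

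A proof of the lemma as stated would run as follows. The characteristic equation $t^2-\gamma_1 t-\gamma_2=0$ has roots $t_{\pm}=\tfrac{1}{2}\bigl(\gamma_1\pm\sqrt{\gamma_1^2+4\gamma_2}\bigr)$; under the hypothesis $\gamma_1^2+4\gamma_2<0$ these are the complex conjugates $t_{\pm}=\tfrac{1}{2}\bigl(\gamma_1\pm i\sqrt{-\gamma_1^2-4\gamma_2}\bigr)=M e^{\pm i\theta}$, where $M=|t_{\pm}|=\sqrt{\tfrac{\gamma_1^2}{4}+\tfrac{-\gamma_1^2-4\gamma_2}{4}}=\sqrt{-\gamma_2}$ and $\theta$ satisfies $\gamma_1=2M\cos\theta$, $\sqrt{-\gamma_1^2-4\gamma_2}=2M\sin\theta$. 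Since the roots are distinct, the general solution of the recurrence is $r^k=A t_+^k+B t_-^k=M^k\bigl(Ae^{i\theta k}+Be^{-i\theta k}\bigr)$ for constants $A,B$ determined by $r^0,r^1$; writing $c_0=(A+B)/2$ and $c_1=i(A-B)/2$ gives $r^k=2M^k\bigl(c_0\cos(\theta k)+c_1\sin(\theta k)\bigr)$. None of this machinery appears in your proposal.
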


We first prove Theorem \ref{THMfm2}.
\begin{proof}[Proof of Theorem \ref{THMfm2}]
	Set $s^k:=V^\top\mathbb{E}[x^{k}-x_0^*]$. Then from Lemma \ref{lemma-613} we have
	$$
	s^{k+1}=\left((1-\alpha+\beta)I+\alpha\left(I-\frac{2\Sigma^\top\Sigma}{\|A\|^2_F}\right)^r\right)s^k-\beta s^{k-1},$$
	which can be rewritten in a coordinate  form as follows:
	\begin{equation}\label{prf-THMm2-1}
		s^{k+1}_i=\big((1-\alpha+\beta)+\alpha\big(1-2\sigma^2_i(A)/\|A\|^2_F\big)^r\big)s_i^k-\beta s_i^{k-1}, \ \ \forall \ i=1,2,\ldots,n,
	\end{equation}
	where $s^k_i$ indicates the $i$-th coordinate of $s^k$.
	
	We now consider two cases:
	$\sigma_i(A)=0$ or $\sigma_i(A)>0$.
	
	If $\sigma_i(A)=0$, then \eqref{prf-THMm2-1} takes the form:
	$$
	s^{k+1}_i=(1+\beta)s_i^k-\beta s_i^{k-1}.
	$$
	Since $x^1-x_0^*=x^0-x_0^*=A^\dagger(Ax^0-b)$, we have $s^0_i=s^1_i=v_i^\top A^\dagger(Ax^0-b)=0$.
	So
	$$
	s^k_i=0 \ \ \mbox{for all} \ k\geq0.
	$$
	
	If $\sigma_i(A)>0$. We use Lemma \ref{Lemma-relation} to establish the desired bound. By the selection constraints of $\alpha$, one can verify that $(1-\alpha)+\alpha\big(1-2\sigma_i^2(A)/\|A\|^2_F\big)^r\geq 0$ and since $\beta\geq0$, we have $(1-\alpha)+\beta+\alpha\big(1-2\sigma_i^2(A)/\|A\|^2_F\big)^r\geq 0$ and hence
	$$
	\begin{aligned}
		\gamma_1^2+4\gamma_2&=\big(1-\alpha+\beta+\alpha\big(1-2\sigma_i^2(A)/\|A\|^2_F\big)^r\big)^2-4\beta
		\\
		&\leq\big(1-\alpha+\beta+\alpha\big(1-2\sigma_{\min}^2(A)/\|A\|^2_F\big)^r\big)^2-4\beta\\
		& <0,
	\end{aligned}
	$$
	where the first inequality follows from Lemma \ref{lemma-xie-09} and the last inequality follows from the assumption that
	$\left(1-\sqrt{\alpha\left(1-\left(1-2\sigma_{\min}^2(A)/\|A\|^2_F\right)^r\right)}\right)^2<\beta<1$.
	Using Lemma \ref{Lemma-relation}, the following bound can be deduced
	$$
	s^{k}_{i}=2\left(-\gamma_{2}\right)^{k / 2}\left(c_{0} \cos (\theta k)+c_{1} \sin (\theta k)\right) \leq 2 \beta^{k / 2} p_{i},
	$$
	where $p_{i}$ is a constant depending on the initial conditions (we can simply choose $p_{i}=\left|c_{0}\right|+\left|c_{1}\right|.$)
	Now put the two cases together, for all $k \geq 0$ we have
	$$
	\begin{aligned}
		\|\mathbb{E}[x^{k}-x_0^{*}]\|_{2}^{2} &=\|V^{\top} \mathbb{E}[x^{k}-x_0^{*}]\|_{2}^{2}=\|s^{k}\|^{2}=\sum_{i=1}^{n}(s^{k}_{i})^{2}
		= \sum_{i: \sigma_{i}(A)=0}(s^{k}_{i})^{2}+\sum_{i: \sigma_{i}(A)>0}(s^{k}_{i})^{2}
		\\&= \sum_{i: \sigma_{i}(A)>0}(s^{k}_{i})^{2}
		\leq \sum_{i: \sigma_{i}(A)>0} 4 \beta^{k} p_{i}^{2} =\beta^{k} c,
	\end{aligned}
	$$
	where $c=4 \sum_{i: \lambda_{i}>0} p_{i}^{2}$.
\end{proof}

\begin{proof}[Proof of Theorem \ref{main-ThmrRDRv2}]
	Theorem \ref{main-ThmrRDRv2} can be directly derived from Lemma \ref{lemma-613}. Indeed, using similar arguments as that in the proof of Theorem \ref{THMfm2}, we can get
	$$
	\|\mathbb{E}[x^{k}-x_0^{*}]\|_{2}^{2} = \sum_{i: \sigma_{i}(A)>0}(s^{k}_{i})^{2},
	$$
	where $s_i^{k}=\left((1-\alpha)+\alpha\left(1-2\sigma^2_i(A)/\|A\|^2_F\right)^r\right)s_i^{k-1}, i=1,\ldots,n.$
	It follows from Lemma \ref{lemma-xie-09}, we have
	$$
	\|\mathbb{E}[x^k-x_0^*]\|^2_2\leq \big(1-\alpha+\alpha\left(1-2\sigma^2_{\min}(A)/\|A\|^2_F\right)^r
	\big)^{2k}\|x^0-x_0^*\|^2_2.
	$$
	This completes the proof of the theorem.
\end{proof}

\begin{proof}[Proof of Theorem \ref{main-ThmrRDRK}]
	By Lemma \ref{lemma-613} and using similar arguments as that in the proof of Theorem \ref{THMfm2} and \eqref{prf-THMm2-1}, we know that
	$$
	s^{k+1}_i=\big((1-\alpha)+\alpha\big(1-2\sigma^2_{i}(A)/\|A\|^2_F\big)^r\big)s_i^k, \ \ \forall \ i=1,2,\ldots,n,
	$$
	where
	$s^{k+1}_{i}=\mathop{\mathbb{E}}[\langle x^{k+1}-x^*,v_{i}\rangle]$ and hence the theorem holds.
\end{proof}

\end{document}